\newcommand{\RR}{\mathbb{R}}
\newcommand{\CC}{\mathbb{C}}
\newcommand{\OO}[1]{\mathcal{O}\paren{#1}}
\newcommand{\paren}[1]{\left( #1 \right)}
\newcommand{\pd}[2]{\frac{\partial#1}{\partial#2}}
\newcommand{\wh}[1]{\widehat{#1}}
\newcommand{\wt}[1]{\widetilde{#1}}
\newcommand{\wb}[1]{\overline{#1}}
\renewcommand{\Re}{\operatorname{Re}}
\renewcommand{\Im}{\operatorname{Im}}
\newcommand{\bea}{\begin{eqnarray}}
\newcommand{\eea}{\end{eqnarray}}
\newcommand{\be}{\begin{equation}}
\newcommand{\ee}{\end{equation}}
\newcommand{\epsm}{\epsilon_{\mbox{\tiny mach}}}
\newcommand{\omax}{\omega_{\max}}
\newcommand{\rhodlr}{\rho_{\text{DLR}}}
\newcommand{\gdlr}{G_{\text{DLR}}}
\newcommand{\hdlr}{H_{\text{DLR}}}
\newcommand{\gb}{G_{\text{B}}}
\newcommand{\kb}{K_{\text{B}}}
\newcommand{\rhob}{\rho_{\text{B}}}
\newcommand{\errmat}{E}
\newcommand{\errfun}{e}
\newcommand{\nmax}{n_{\max}}
\newcommand{\kmat}{\mathcal{K}}
\newtheorem{theorem}{Theorem}
\newtheorem{lemma}{Lemma}
\newenvironment{proofw}{\par
  \pushQED{\qed}%
  \normalfont \topsep6\p@\@plus6\p@\relax
  \trivlist
  \item[]\ignorespaces
}{%
  \popQED\endtrivlist\@endpefalse
}
\newcommand{\CCQ}{Center for Computational Quantum Physics, Flatiron Institute, 162 5th Avenue, New York, NY 10010, USA}
\newcommand{\CCM}{Center for Computational Mathematics, Flatiron Institute, 162 5th Avenue, New York, NY 10010, USA}
\begin{document}

\title{Discrete Lehmann representation of imaginary time Green's functions}

\author{Jason Kaye}
\email{jkaye@flatironinstitute.org}
\affiliation{\CCM}
\affiliation{\CCQ}

\author{Kun Chen}
\affiliation{\CCQ}

\author{Olivier Parcollet}
\affiliation{\CCQ}
\affiliation{Universit\'e Paris-Saclay, CNRS, CEA, Institut de Physique Th\'eorique, 91191, Gif-sur-Yvette, France}

\begin{abstract}
   We present an efficient basis for imaginary time Green's functions
   based on a low rank decomposition of the spectral Lehmann
   representation. The basis functions are simply a set of well-chosen
   exponentials, so the corresponding expansion may be thought of as a
   discrete form of the Lehmann 
   representation using an effective spectral density which is
   a sum of $\delta$ functions. The basis is determined only by an upper bound on the product $\beta \omax$, with $\beta$ the inverse
   temperature and $\omax$ an energy cutoff, and a user-defined error tolerance
   $\epsilon$. The number $r$ of basis functions
   scales as $\OO{\log(\beta \omax)
   \log \paren{1/\epsilon}}$. The discrete Lehmann representation of a particular
   imaginary time Green's function can be
   recovered by interpolation at a set of $r$ imaginary time nodes. Both the basis functions and
   the interpolation nodes can be
   obtained rapidly using standard numerical linear algebra routines.
   Due to the simple form of the basis, the discrete Lehmann
   representation of a Green's function can be explicitly transformed to the Matsubara
   frequency domain, or obtained directly by interpolation on a
   Matsubara frequency grid.
   We benchmark the efficiency of the representation on simple cases,
   and with a high precision 
   solution of the Sachdev-Ye-Kitaev equation at low temperature. 
   We compare our approach with the related intermediate
   representation method, and introduce an improved algorithm to build
   the intermediate representation basis and a corresponding sampling
   grid.
\end{abstract}

\maketitle

\section{Introduction}

Quantum many-body physics is entering a new era, with the rise of
high precision algorithms capable of obtaining
controlled solutions in the strongly interacting regime.
A large family of approaches concentrates on computing finite temperature correlation functions.
Indeed, the imaginary time formalism in thermal equilibrium is well suited 
to describe both the thermodynamic and many of the equilibrium
properties of a system.\cite{Abrikosov:QFT}
It is widely used, for example by quantum Monte Carlo algorithms, which are formulated in imaginary time.

For many applications, generic methods of representing one and two-particle imaginary time Green's
functions may be insufficient to obtain the required precision given
computational cost and memory
constraints. Examples include
(i) the storage of one-body Green's functions with a large number of
orbitals, as in quantum chemistry applications (see Ref.
\onlinecite{GullStrand_2020}
and the references therein), or on a lattice with complex momentum dependence;
(ii) the high precision solution of the Dyson equation for such Green's
functions; 
(iii) computations in which highly accurate representations of
Green's functions are required, as
for the bare propagator in some high-order perturbative expansions
\cite{qqmc2020}; and
(iv) the storage of two-body Green's functions, which depend on three
time arguments \cite{shinaoka18,shinaoka20}.

The simplest approach is to represent a Green's function $G$ on
a uniform grid of $m$ points in imaginary time $\tau$, and by a truncated
Fourier series of $m$ modes in
Matsubara frequency $i \nu_n$. While this method offers some practical
advantages, including the ability to transform between the imaginary
time and Matsubara frequency domains by means of the fast Fourier
transform, it is a poor choice from the
point of view of efficiency, particularly when the inverse temperature
$\beta$ is large.
First, $m = \OO{\beta}$ grid points are required in imaginary time to resolve sharp
features caused by high energy scales. Second, since the
Green's functions are discontinuous at the endpoints $\tau = 0$ and
$\beta$ of the imaginary time interval, their Fourier
coefficients decay as $\OO{1/m}$, so that the representation converges
with low-order accuracy.

Representing $G(\tau)$ by an orthogonal polynomial (Chebyshev or
Legendre) expansion of degree $m$ yields a significant
improvement.\cite{Boehnke2011}
Indeed, since $G(\tau)$ is smooth on $[0,\beta]$, such a representation
converges with spectral accuracy
\cite{Boehnke2011,kananenka16,gull18,gull18,GullStrand_2020}; see also
Ref. \onlinecite{trefethen19} for a thorough overview of the theory of
orthogonal polynomial approximation. However, resolving the Green's
function still requires an expansion of degree $m = \OO{\sqrt{\beta}}$.
\cite{chikano18}

A third idea is the ``power grid'' method, which uses a grid exponentially clustered towards
$\tau = 0$ and $\beta$. In this approach, an adaptive sequence
of panels is constructed, and a polynomial interpolant used on each
panel, leading to a representation requiring only
$\OO{\log \beta}$ degrees of freedom. \cite{ku00,ku02,kananenka16}
However, the power grid method has been
implemented using uniform grid interpolation on each panel, which can
lead to numerical instability for high-order interpolants. A more stable
method, using spectral grids on each panel, is incorporated as an
intermediate step in our framework, but ultimately further compression of
the representation can be achieved. 

A newer approach is to construct highly compact representations
by taking advantage of the specific structure of imaginary time Green's
functions, which satisfy the spectral Lehmann representation
\begin{equation} \label{eq:lehmann}
  G(\tau) = - \int_{-\infty}^{\infty} K(\tau,\omega)
  \rho(\omega) \, d\omega,
\end{equation}
for $\tau \in [0,\beta]$.
Here $\rho$ is the spectral density, $\omega$
is a real frequency variable,
and the kernel $K$ is given in the fermionic case by
\begin{equation} \label{eq:defK}
  K(\tau,\omega) =  \frac{e^{-\omega \tau}}{1+e^{-\beta \omega}}.
\end{equation} 
In our discussion, we assume that the support of $\rho$ is contained in
$[-\omax,\omax]$, for $\omax$ a high energy cutoff; this always holds to high accuracy for sufficiently
large $\omax$. For convenience, we also define a dimensionless high energy cutoff $\Lambda
\equiv \beta \omax$.

The key observation is that the fermionic kernel $K$ can be approximated to
high accuracy by a low rank decomposition. \cite{shinaoka17}
The most well-known manifestation of this fact is the severe
ill-conditioning of
analytic continuation from the imaginary to the real time axis.
However, one can take
advantage of this low rank structure to obtain a compact
representation of $G(\tau)$. In Refs. \onlinecite{shinaoka17,chikano18}, orthogonal bases for imaginary time
Green's functions containing only $\OO{\log \Lambda}$ basis functions are constructed from the left singular vectors in the
singular value decomposition (SVD) of
a discretization of $K$. 
This method, called the intermediate representation (IR), has been used
successfully in a variety of applications, including those
involving two-particle quantities
\cite{shinaoka18,shinaoka19,otsuki20,shinaoka20,wallerberger20,wang20,shinaoka21};
see also Ref. \onlinecite{shinaoka21_2} for a useful review and further
references. A related approach is the minimax isometry method, which
uses similar ideas to construct optimal quadrature rules for
Matsubara summation in GW applications. \cite{kaltak20}

In this paper, we present a method which is related to the IR,
but uses a different low rank
decomposition of $K$, called the interpolative
decomposition (ID). \cite{cheng05,liberty07}
It leads to a discrete Lehmann representation (DLR) of any imaginary
time Green's function $G(\tau)$ as a  
linear combination of $r$ exponentials $e^{- \omega_k \tau}$ with a
set of frequencies $\omega_{k}$ which depend only on $\Lambda$ and
$\epsilon$. Like the IR basis, the DLR basis is \emph{universal} in the sense that given any
$\Lambda$ and $\epsilon$, it is sufficient to represent any imaginary
time Green's function obeying the energy cutoff $\Lambda$ to within
accuracy $\epsilon$. The number of basis functions is observed to scale as $r = \OO{\log(\Lambda) \log
\paren{1/\epsilon}}$, and is nearly the same
as the number of IR basis functions with the same choice of $\Lambda$
and error tolerance $\epsilon$.

Our construction begins with a discretization of $K$ on a composite
Chebyshev grid, designed to resolve the range of energy
scales present in Green's functions up to a given cutoff $\Lambda$. Then,
instead of applying the SVD to the resulting
matrix as in the IR method, we use the ID to select a set of $r$ representative frequencies
$\omega_k$ such that the functions $K(\tau,\omega_k)$ form the basis of
exponentials. The ID also yields a set of $r$ 
interpolation nodes, such that the DLR of a given Green's
function $G$ can be recovered from samples at those nodes.
The DLR can be explicitly transformed to the Matsubara
frequency domain, where it takes the form of a linear combination
of $r$ poles $(i \nu_n + \omega_k)^{-1}$. As in the imaginary time
domain, the DLR can also be recovered by interpolation at $r$
nodes on the Matsubara frequency axis.

Compared with the IR approach, the DLR basis exchanges orthogonality
for a simple, explicit form of the basis functions. However, we show that
orthogonality is not required for numerically stable recovery of the
representation. On the other hand, using an explicit basis of
exponentials has many advantages. In particular, it avoids the cost and
complexity of working with the IR basis functions, which are themselves
represented on a fine adaptive grid, and evaluated using corresponding
interpolation procedures. Many standard computational tasks -- such as
transforming between the imaginary time and Matsubara frequency domains,
and performing convolutions -- are reduced to simple explicit formulas.

The algorithms which we use in the context of the DLR also carry
over to the IR method, and offer two main improvements over 
previously established algorithms.

First, the numerical tools we describe can be used to construct an
efficient sampling grid for the IR basis in a more systematic manner than the
sparse sampling method, which is typically used. Sparse sampling
provides a method of obtaining compact grids in imaginary time and Matsubara
frequency, from which one can recover the IR coefficients. \cite{li20} The sparse
sampling grid is analogous to the interpolation grid used for the DLR.
However, whereas the sparse sampling method selects a grid based on
a heuristic, we use a purely linear algebraic method with robust
accuracy guarantees, which is also applicable to the IR.

Second, existing methods to build the IR basis functions are 
computationally intensive, requiring hours of computation time for large values
of $\Lambda$. Furthermore, the basis functions themselves are
represented using a somewhat complicated adaptive data structure. Of course, basis functions for a given choice of $\Lambda$
need only be computed once and stored, and to facilitate the process, an
open source software package has been released which contains tabulated basis
functions for several fixed values of $\Lambda$, as well as routines
to work with them. \cite{chikano19} However, in some cases, the
situation is cumbersome, for example if one wishes to converge a
calculation with respect to $\Lambda$.
By contrast, we present a simple
discretization of $K(\tau,\omega)$, which allows us to construct either
the DLR or IR basis functions from a single call to the pivoted QR and
SVD algorithms, respectively, with matrices of modest size.
This yields the basis functions and associated imaginary time
interpolation nodes in less than a second on a laptop
for $\Lambda$ as large as $10^6$ and $\epsilon$ near the double machine
precision. The resulting DLR basis functions are characterized by a list of
$r$ frequency nodes $\omega_k$, and the IR basis functions are
represented using a simple data structure.

In addition to describing efficient algorithms to implement the DLR, we
present mathematical theorems  which provide error bounds and control
inequalities.  We illustrate the DLR approach on several simple
examples, as well as on a high precision, low temperature solution of
the Sachdev-Ye-Kitaev (SYK) model. \cite{SachdevYe93,gu20}

Open source Fortran and Python implementations of the
DLR are available in the library \texttt{libdlr}.\cite{libdlr} We refer the reader to
Ref. \onlinecite{kaye21} for a detailed description.

This paper is structured as follows. In Section \ref{sec:summaryintro}, we
present a short overview of the DLR
with an example, leaving aside technical details.
In Section \ref{sec:tools}, we introduce the mathematical tools required in the rest of the
paper, namely composite Chebyshev interpolation and the interpolative decomposition.
In Section \ref{sec:dlr}, we develop the DLR, describe our algorithm, and show some benchmarks.
In Section \ref{sec:ir}, we derive the IR, describe its
relationship with the DLR, and present efficient algorithms to construct
the IR basis functions and associated grid. We show how to solve the
Dyson equation efficiently using the DLR in
Section \ref{sec:dyson}, and demonstrate the method by solving the SYK
equation in Section \ref{sec:syk}.
Section \ref{sec:Conclusion} contains a concluding discussion.

\section{Overview}\label{sec:summaryintro}

We develop our method using the fermionic kernel $K$; we show in
Appendix \ref{app:bosonic} that in fact this kernel can also be used for
bosonic Green's functions. To simplify the notation, we also
restrict our discussion to scalar-valued Green's functions, as the
extension to the matrix-valued case is straightforward.

We assume the spectral density $\rho$, which may in general be a
distribution, is integrable and supported in
$[-\omax,\omax]$. It is convenient to further nondimensionalize \eqref{eq:lehmann} by performing the change of variables $\tau \gets \tau/\beta$ and $\omega \gets \beta \omega$. 
In these variables, we have $\tau \in [0,1]$, and the support of
$\rho(\omega)$
is contained in $[-\Lambda,\Lambda]$, with $\Lambda = \beta \omax$.
$\Lambda$ is a user-determined parameter. An estimate of $\omax$, and
therefore of $\Lambda$, can often be obtained on physical grounds, but
in general $\Lambda$ is used as an accuracy parameter and is increased
until convergence is reached. Then, assuming $\Lambda$ is taken
sufficiently large, \eqref{eq:lehmann} is equivalent to the
\emph{truncated} Lehmann representation
\begin{equation} \label{eq:tlehmann}
  G(\tau) =  - \int_{-\Lambda}^{\Lambda} K(\tau,\omega)
  \rho(\omega) \, d\omega,
\end{equation}
for $K$ given by (\ref{eq:defK}) with $\beta = 1$.

As for the IR, we exploit the low numerical rank
of an appropriate discretization of $K$ to obtain a compact representation of $G$.
We simply use the ID, rather than the SVD, after discretizing $K$ on
a carefully constructed grid. We will show that 
$G(\tau)$ can be approximated to any fixed accuracy $\epsilon$ by a discrete sum with $r$ terms,
\begin{equation} \label{eq:dlehmann}
  G(\tau) \approx  \gdlr(\tau) \equiv \sum_{k=1}^r  K(\tau,\omega_k)
  \wh{g}_k.
\end{equation}
Here $\{\omega_k(\Lambda, \epsilon)\}_{k=1}^r$ is a collection of selected
frequencies, and
the spectral density $\rho$ has been replaced by a discrete set of
coefficients $\wh{g}_k$. A minus sign has been absorbed into the
coefficients to simplify expressions.
The basis functions of this representation are simply exponentials,
\begin{equation} \label{eq:dlehmann2}
  \gdlr(\tau) = \sum_{k=1}^r \frac{e^{-\omega_k \tau}}{1+e^{-\omega_k}}
  \wh{g}_k = \sum_{k=1}^r  \wt{g}_k e^{-\omega_k \tau},
\end{equation}
a feature which simplifies many calculations. 
We refer to (\ref{eq:dlehmann}, \ref{eq:dlehmann2}) as a discrete
Lehmann representation of $G$. 

We emphasize that given a user-specified error tolerance $\epsilon$ and
a choice of $\Lambda$, the $r$ selected frequencies $\omega_k$ are
universal; that is, independent of $G$. 
Furthermore, $r$, which we refer to as the DLR rank, is close to the
$\epsilon$-rank of $K(\tau,\omega)$, which is the number of IR
basis functions for the same choice of $\Lambda$ and $\epsilon$, so
the DLR also requires at most $\OO{\log (\Lambda)
\log\paren{1/\epsilon}}$ degrees of freedom.
The high energy cutoff $\Lambda$ plays an important role in this
representation, as it controls the regularity of $G(\tau)$,
allowing a representation by a finite combination of exponentials.
We will prove the existence of a representation \eqref{eq:dlehmann} with error tightly
controlled by $\epsilon$, and describe a method to construct such a
representation by interpolation of $G$ at $r$ selected nodes in imaginary time or
Matsubara frequency.

\begin{figure}[t]
  \centering
  \includegraphics[width=0.23\textwidth]{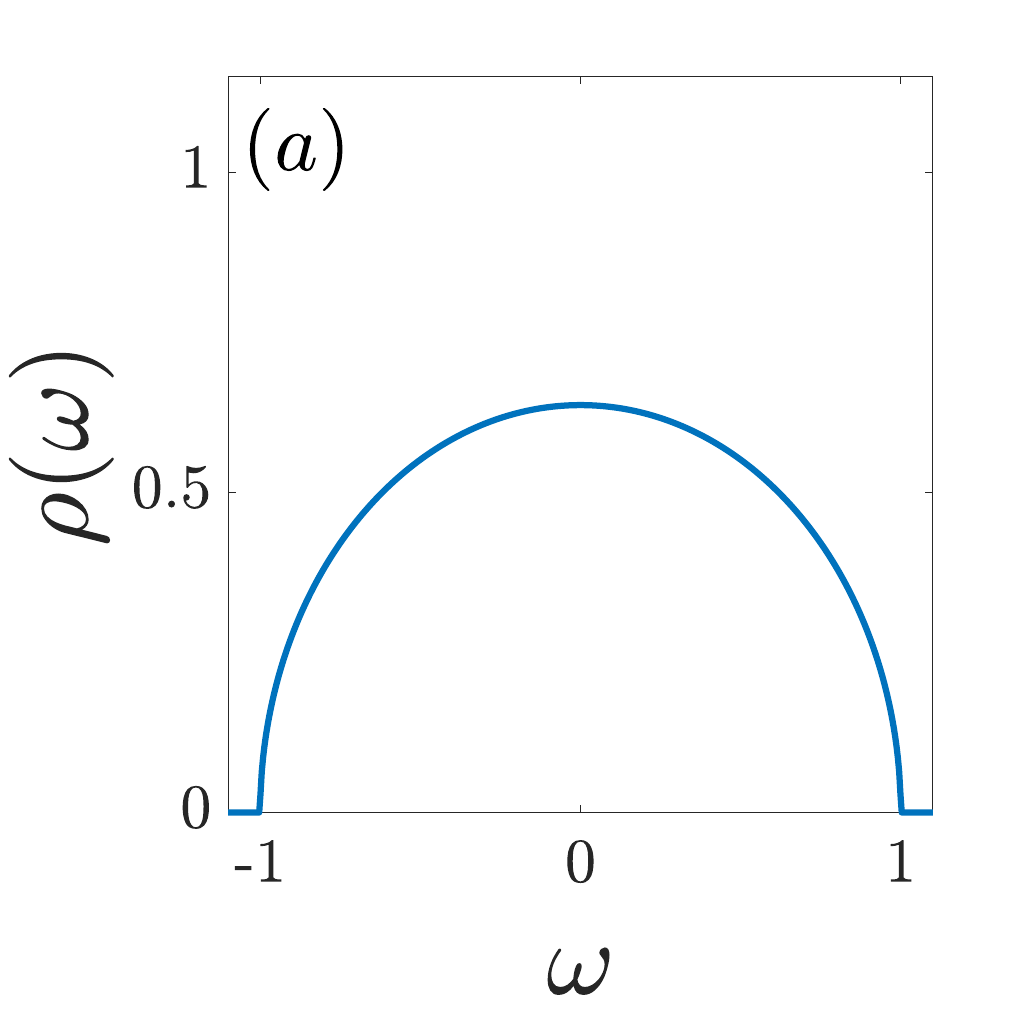}
  \includegraphics[width=0.23\textwidth]{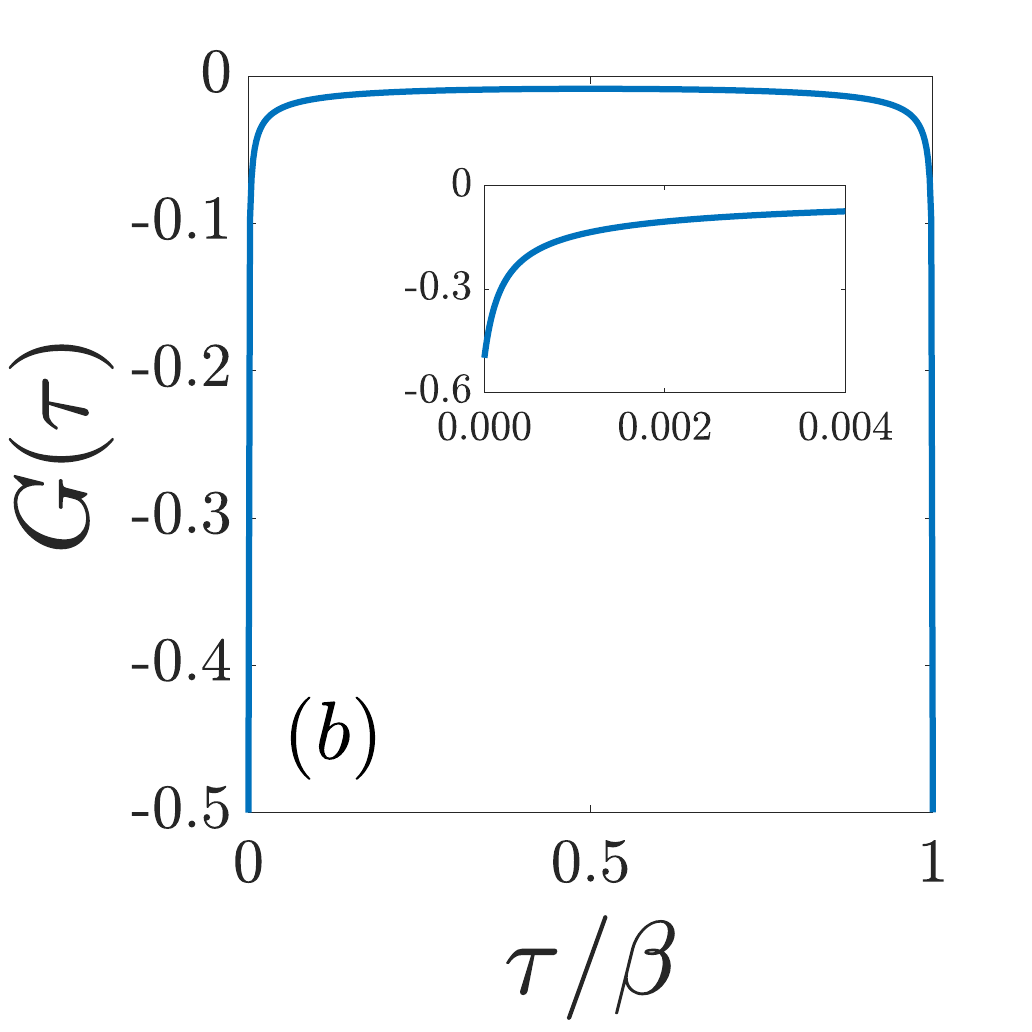}
  
  \includegraphics[width=0.23\textwidth]{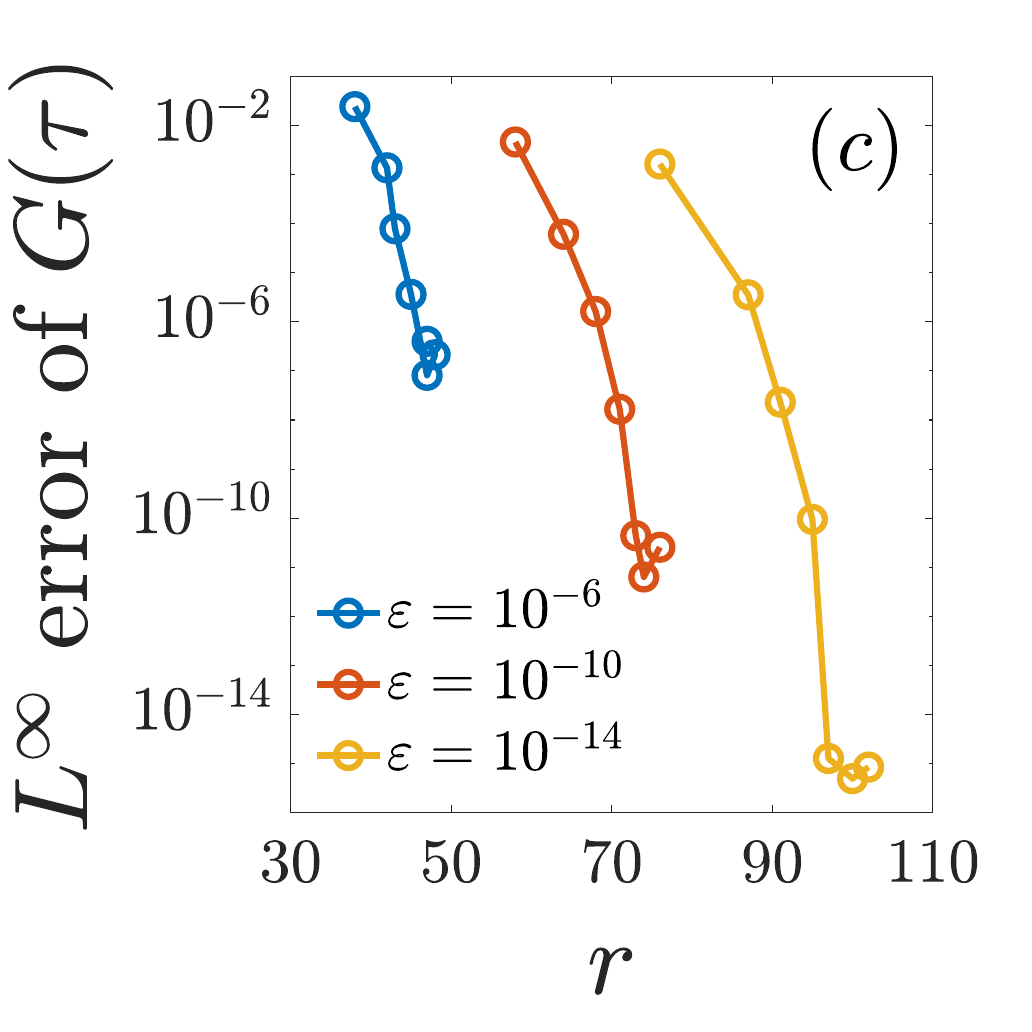}
  \includegraphics[width=0.23\textwidth]{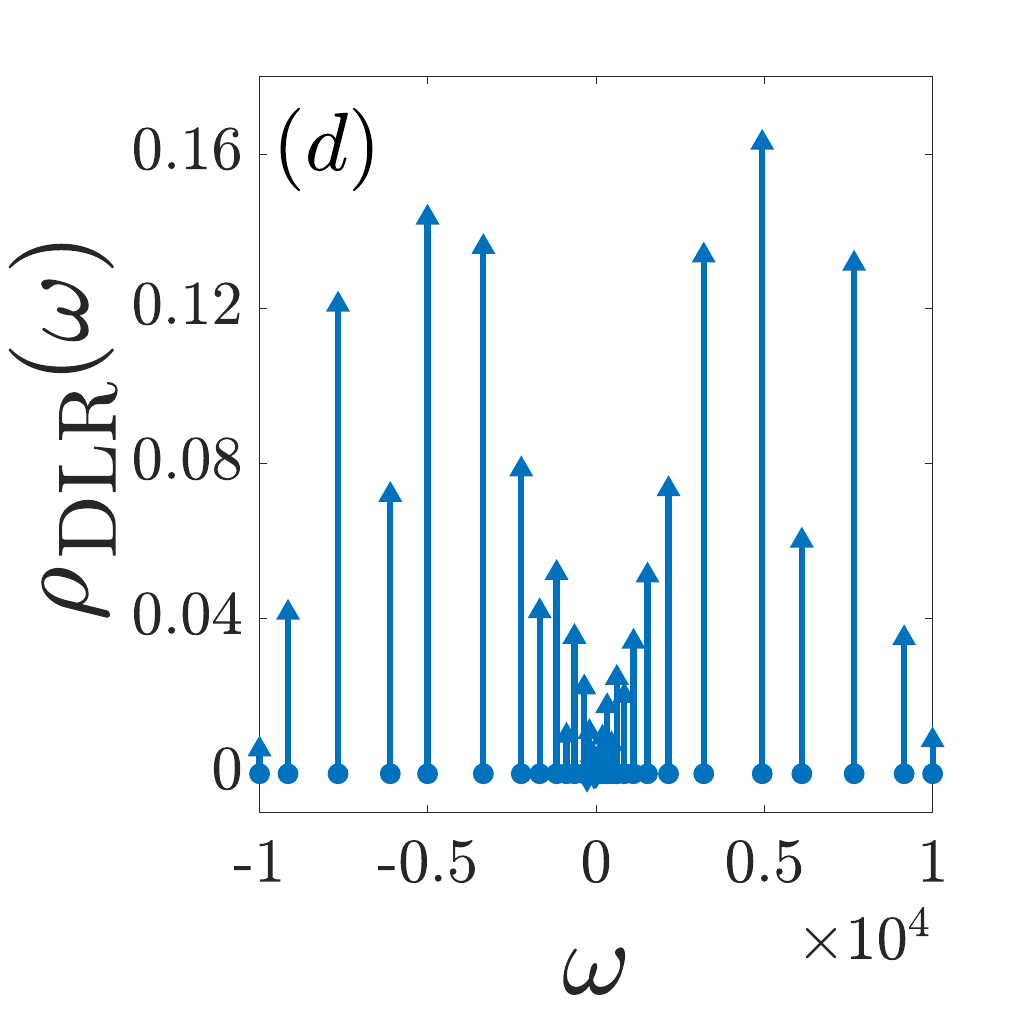}

  \caption{(a) Spectral density $\rho(\omega) =
  \frac{2}{\pi} \sqrt{1-\omega^2} \, \theta(1-\omega^2)$.
  (b) Corresponding imaginary time Green's function $G(\tau)$ with $\beta = 10^4$.
  (c) $\norm{G-\gdlr}_\infty$ as a function of the number of basis
  functions $r$ for $\epsilon= 10^{-6}, 10^{-10}, 10^{-14}$. 
  The values of $r = r(\Lambda, \epsilon)$ correspond to $\Lambda = 0.2 \times 10^4, 0.4 \times
  10^4, \ldots, 1.2 \times 10^4$.
  (d) Representation of the DLR coefficients $\wh{g}_k$ as an effective
  spectral density which is a sum of $\delta$ functions, as in
  \eqref{eq:deltasum}.}
\label{fig:semicirc}
\end{figure}

A first example is presented in Figure \ref{fig:semicirc}.
We take $\beta = 10^4$, and consider a particle-hole symmetric fermionic Green's function $G(\tau)$ defined by 
the spectral density 
$\rho(\omega) = \frac{2}{\pi} \sqrt{1-\omega^2} \, \theta(1-\omega^2)$,
with $\theta$ the Heaviside function, as shown in Figures
\ref{fig:semicirc}a and \ref{fig:semicirc}b.
Figure \ref{fig:semicirc}c shows the error of the DLR (\ref{eq:dlehmann}, \refeq{eq:dlehmann2}) as
a function of $r$ for fixed $\epsilon= 10^{-6}, 10^{-10}, 10^{-14}$.
Here, we vary $\Lambda$ near the known sufficient value of $10^4$ 
($\beta = 10^4$ and $\rho$ is supported in $[-1,1]$)
and plot the error here versus $r(\Lambda)$, instead of $\Lambda$, to emphasize the number of basis functions.
The error decays super-exponentially at first, and reaches $\epsilon$
when $\Lambda \approx 10^4$.
The value of $r$ at which convergence is reached depends on $\epsilon$, so that
in practice, to obtain the smallest possible basis for a given
accuracy, one should first choose $\epsilon$ 
and then increase $\Lambda$ until convergence.

The DLR can be formally interpreted as a spectral representation 
with an effective spectral density $\rhodlr$ which is a sum
of $\delta$ functions:
\begin{equation} \label{eq:deltasum}
  \rho_\text{DLR}(\omega) = -\sum_{k=1}^r \wh{g}_k \delta(\omega-\omega_k).
\end{equation}
Such a representation is made possible by the ill-conditioning of the integral operator
defining the Lehmann representation; up to a fixed precision
$\epsilon$, the spectral density corresponding to a given imaginary
time Green's function is highly non-unique. Thus, we simply pick
one such spectral density with a particularly simple form, rather than
attempting to reconstruct the original spectral density. 
Figure \ref{fig:semicirc}d
shows a graphical
representation of $\rho_\text{DLR}$ and hence of the coefficients
$\wh{g}_k$ and the selected frequencies $\omega_k$.

\section{Mathematical preliminaries} \label{sec:tools}

This section will review our two main numerical tools: composite
Chebyshev interpolation, which will be used to obtain an accurate
initial discretization of the kernel $K(\tau,\omega)$, and the
interpolative decomposition, which will be used for low rank
compression. 

\subsection{Composite Chebyshev interpolation} \label{sec:chebyshev}

Polynomial interpolation at Chebyshev nodes is a well-conditioned method
for the approximation of a smooth function $f$ on an interval.
\cite{trefethen19} If $f$ can be analytically continued to a
neighborhood of $[a,b]$, the error of the interpolant in the supremum
norm decreases geometrically with its degree, and if $f$ can be
analytically continued to the entire complex plane, the convergence is
super-geometric; see Ref. \onlinecite{trefethen19} (Thm. 8.2). There are fast and stable algorithms
to evaluate Chebyshev interpolants, such as the method of
barycentric Lagrange interpolation. \cite{berrut04,higham04}

For functions with sharp features or variation at multiple length
scales, using a single polynomial interpolant on $[a,b]$ is inefficient.
A better alternative is to construct a piecewise polynomial interpolant
by the method of \emph{composite Chebyshev interpolation at fixed order}. To be precise,
let $[a_1,b_1], [a_2,b_2], \ldots,
[a_n,b_n]$ with $a = a_1 < b_1 = a_2 < b_2 = \cdots = a_n < b_n = b$ be a
collection of subintervals partitioning $[a,b]$. Let $\{x_{ij}\}_{i=1}^p$ be the
$p$ Chebyshev nodes on $[a_j,b_j]$. Then
$\{x_{ij}\}_{{i,j=1}}^{p,n}$ is called a \emph{composite
Chebyshev grid}. Let $\ell_{ij}(x)$ be the \emph{Lagrange
polynomial} corresponding to the $i$th grid point on the $j$th panel;
this is the polynomial of degree $p-1$ which satisfies
\[\ell_{ij}(x) = 
\begin{cases}
  1 &\text{if } x = x_{ij} \\
  0 &\text{if } x = x_{kj}, k \neq i.
\end{cases}
\]
Let $\chi_j(x)$ be the
characteristic function on the interval $[a_j,b_j]$. Then the degree
$p-1$ composite Chebyshev interpolant of a function $f$ on $[a,b]$
corresponding to the above partition is given by
\begin{equation} \label{eq:interp}
  \wh{f}(x) = \sum_{j=1}^n \chi_j(x) \sum_{i=1}^p \ell_{ij}(x) f(x_{ij}).
\end{equation}
Evidently, we have $f(x_{ij}) = \wh{f}(x_{ij})$ for each $i=1,\ldots,p$
and $j=1,\ldots,n$. The partition of $[a,b]$ should be chosen to resolve
local features of $f$, and the degree $p$ should be chosen sufficiently
large so that the rapidly converging Chebyshev interpolants of $f$ on
each subinterval $[a_j,b_j]$ are accurate.

To simplify expressions, we define the truncated Lagrange
polynomial on the interval $[a_j,b_j]$ by $\wb{\ell}_{ij} \equiv \ell_{ij}
\chi_j$.  It will also sometimes be convenient to cast the double index
$i=1,\ldots,p$, $j=1,\ldots,n$ for the composite grid points to a single
index $i = 1,\ldots,p \times n$, with $x_i \gets x_{ij}$, $\ell_i \gets
\ell_{ij}$, and $\wb{\ell}_i \gets \wb{\ell}_{ij}$. In this notation,
\eqref{eq:interp} becomes
\begin{equation} \label{eq:interp2}
  \wh{f}(x) = \sum_{i=1}^{p \times n} \wb{\ell}_{i}(x) f(x_i).
\end{equation}

\subsection{Interpolative decomposition}

We say an $m \times n$ matrix $A$ is numerically low rank,
or more specifically, has low $\epsilon$-rank, if $A$ has only $r \ll \min(m,n)$ singular
values larger than $\epsilon$. The best rank $r$ approximation of $A$
in the spectral norm is given by its SVD
truncated to the first $r$ singular values, and its error in that norm
is the next singular value $\sigma_{r+1}$; see Ref.
\onlinecite{ballani16} (Sec. 2, Thm. 2). Thus, the truncated SVD (TSVD)
yields an approximation with error $\epsilon$ in the spectral norm
for a matrix with $\epsilon$-rank $r$.

The interpolative decomposition is an alternative to the
TSVD for compressing numerically low rank matrices. It has the
advantage that the column space is represented by selected columns of
$A$, rather than an orthogonalization of the columns of $A$, as in the
TSVD. The price is a mild and controlled loss of optimality compared
with the TSVD. The ID and related algorithms are described in Refs.
\onlinecite{cheng05,liberty07,gu96}; in particular, we make use of the form of
the ID and the theoretical results summarized in Ref. \onlinecite{liberty07}.

Given $A \in \CC^{m \times n}$, the rank $r$ ID is given by
\[A \approx BP\]
with $B \in \CC^{m \times r}$ a matrix containing $r$ selected columns
of $A$, and $P \in \CC^{r \times n}$, the so-called projection matrix,
containing the coefficients required to approximately recover all of the
columns of $A$ from the $r$ selected columns. The error of the
decomposition is given by
\begin{equation} \label{eq:idest}
  \norm{A-BP}_2 \leq \sqrt{r(n-r)+1} \, \sigma_{r+1},
\end{equation}
so the ID gives a rank $r$ approximation of $A$ which is at most a factor of
$\sqrt{r(n-r)+1}$ less accurate than 
the TSVD. The numerical stability
of the ID as a representation of $A$ can also be guaranteed; in
particular, we have
\begin{equation} \label{eq:idpest}
  \norm{P}_2 \leq \sqrt{r (n-r) + 1}.
\end{equation}
The references given above contain detailed statements of the relevant
results which we have quoted here, along with the accompanying analysis.

Numerical algorithms are available which construct such a decomposition
with bounds typically within a small factor of those stated above. The standard
algorithm, described in Ref. \onlinecite{cheng05}, proceeds in two
steps. First, the pivoted QR process is applied to $A$, yielding a
collection of $r$ columns of $A$ -- corresponding to the pivot indices --
which are, in a certain sense, as close as possible to being mutually
orthogonal. These $r$ columns comprise the matrix $B$ in the ID.
Next, a linear system is solved to determine the coefficients of the
remaining columns of $A$ in the basis determined by $B$. These
coefficients are stored in the matrix $P$. 
The cost of this algorithm is $\OO{r m n}$. If the rank $r$ is not known
a priori, it is straightforward to apply this algorithm in a
rank-revealing manner, 
so that given an input $\epsilon$ it
yields an estimated $\epsilon$-rank $r$ and a rank $r$ ID with
$\norm{A - BP}_2 \leq \epsilon$. Of course, the returned
$\epsilon$-rank may be larger than the true $\epsilon$-rank,
consistent with the suboptimality of the estimate \eqref{eq:idest} and
the behavior of the singular values of $A$.

We remark that for several of the algorithms presented in this article
-- in particular for all the algorithms involving the DLR -- we only
ever need to perform the pivoted QR step of the ID to identify $k$
selected columns of a matrix, and in particular do not
need to construct the full ID. Nevertheless, the
presentation in terms of the ID is both conceptually and theoretically
useful, and helps to unify our discussions of the DLR and the IR, so we
adopt that language throughout. Our descriptions of algorithms in the
text will make this point clear.

The Fortran library \texttt{ID} provides an implementation of the ID
algorithm. \cite{idlib,iddoc} A Python interface is available in SciPy.
\cite{idlibscipy} For our numerical experiments, we use the implementation
of the rank-revealing pivoted QR algorithm contained in the Fortran
version of the library.

\section{Discrete Lehmann representation} \label{sec:dlr}

The DLR basis functions are built by a two-step procedure. First, we
discretize $K(\tau,\omega)$ on a composite Chebyshev fine grid
$\{(\tau_i^f,\omega_j^f)\}_{i=1,j=1}^{M,N}$, obtaining a
matrix with entries $K(\tau_i^f,\omega_j^f)$. Then, we obtain a small subset $\{\omega_{l}\}_{l=1}^r$ of the
fine grid points in $\omega$ from the ID of this matrix, such that
\begin{equation} \label{eq:Kinterp}
  K(\tau,\omega) \approx \sum_{l=1}^r K(\tau,\omega_l) \pi_l(\omega)
\end{equation}
holds to high accuracy uniformly in $\tau$, for some coefficients
$\pi_l(\omega)$. The functions
$\{K(\tau,\omega_l)\}_{l=1}^r$ are referred to as the \emph{DLR basis
functions}. Inserting \eqref{eq:Kinterp} into the Lehmann
representation \eqref{eq:lehmann} will establish the existence of the
DLR. The discretization of $K$ will be discussed in Section
\ref{sec:kdisc}, and the construction of the DLR basis in Section
\ref{sec:basis}.
In Sections \ref{sec:dlrpts} and \ref{sec:matpts}, we will describe a stable method of
constructing the DLR of a Green's function $G$ from samples of $G$ at
only $r$ selected imaginary time and Matsubara frequency nodes,
respectively. In Section \ref{sec:algsummary} we will give a practical
summary of the various procedures, and we will demonstrate the DLR with
a few simple examples in Section \ref{sec:numex}. Throughout the
discussion, except when describing specific physical examples, we will
work in the nondimensionalized variables described at the beginning of
Section \ref{sec:summaryintro}, with $\tau \in [0,1]$, $\omega \in
[-\Lambda,\Lambda]$, and $K(\tau,\omega) = e^{-\tau
\omega}/(1+e^{-\omega})$.

\subsection{Discretization of $K(\tau,\omega)$} \label{sec:kdisc}

We discretize $K(\tau,\omega)$ by finding grids
sufficient to resolve $K(\tau,\omega_0)$ on $\tau \in [0,1]$ for all
fixed $\omega_0 \in [-\Lambda,\Lambda]$, and $K(\tau_0,\omega)$ on
$\omega \in [-\Lambda,\Lambda]$ for all fixed $\tau_0 \in [0,1]$. A
closely related problem was considered in Ref. \onlinecite{gimbutas20}, in which
it is shown (Lemma 4.4) that all exponentials in the family $\{e^{-\omega \tau}\}_{\omega \in
[1,\Lambda]}$ can be represented to error uniformly less than $\epsilon$
on $\tau \geq 0$ in a basis of $\OO{\log(\Lambda)
\log\paren{1/\epsilon}}$ exponentials chosen from the family. As in their
proof, we will make use of dyadically refined composite Chebyshev grids.
A minor modification of their proof is sufficient to give a rigorous
justification of our method, though we do not discuss the details
here. 

We begin with the first case, for $\omega_0 \in [0,\Lambda]$, which gives 
$K(\tau,\omega_0) = c e^{-\omega_0 \tau}$
for a constant $c$; a family of decaying exponentials.
Consider the composite Chebyshev grid on $\tau \in [0,1]$ \emph{dyadically
refined} towards the origin; that is, with intervals given by $a_1 = 0$, $a_i = b_{i-1} = 2^{-(m-i+1)}$ for
$i=2,\ldots,m$, and $b_m = 1$. We take $m \sim \log_2 \Lambda$ to
resolve the smallest length scale in the family of exponentials, which
appears for $\omega_0 = \Lambda$.
With this choice, the degree parameter $p$ can be chosen sufficiently large so that the resulting
composite Chebyshev interpolant is uniformly accurate for any $\Lambda$.
Double precision machine accuracy $\epsm$ can be achieved with a moderate
choice of $p$, since the Chebyshev interpolants of the exponentials converge
rapidly with $p$. The accuracy of the interpolants can be checked
directly, and $p$ refined to convergence.

For $\omega_0 \in [-\Lambda,0]$, we observe that $K(\tau,\omega_0) = c
e^{\omega_0 (1-\tau)}$, revealing a symmetry in $K$ about $\tau = 1/2$. We therefore split the last interval
$[1/2,1]$ in our partition into a set of subintervals dyadically
refined towards $\tau = 1$, in the same manner as above. The resulting
composite Chebyshev grid is sufficient to resolve $K(\tau,\omega_0)$ for
all $\omega_0 \in [-\Lambda,\Lambda]$, and contains $\OO{\log \Lambda}$
points. An example of such a grid is shown in Figure \ref{fig:grids}a.

\begin{figure}[t]
  \centering
  \includegraphics[width=.46\textwidth]{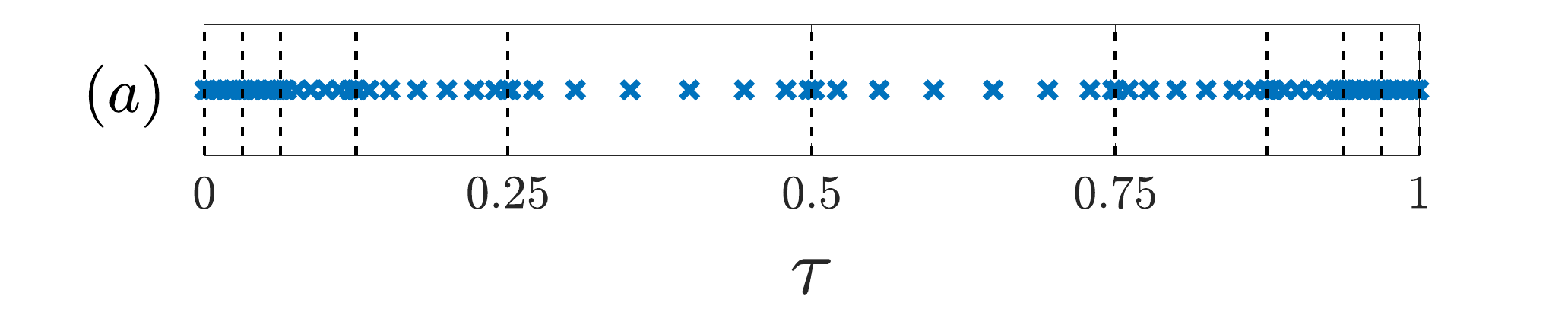}
  \includegraphics[width=.46\textwidth]{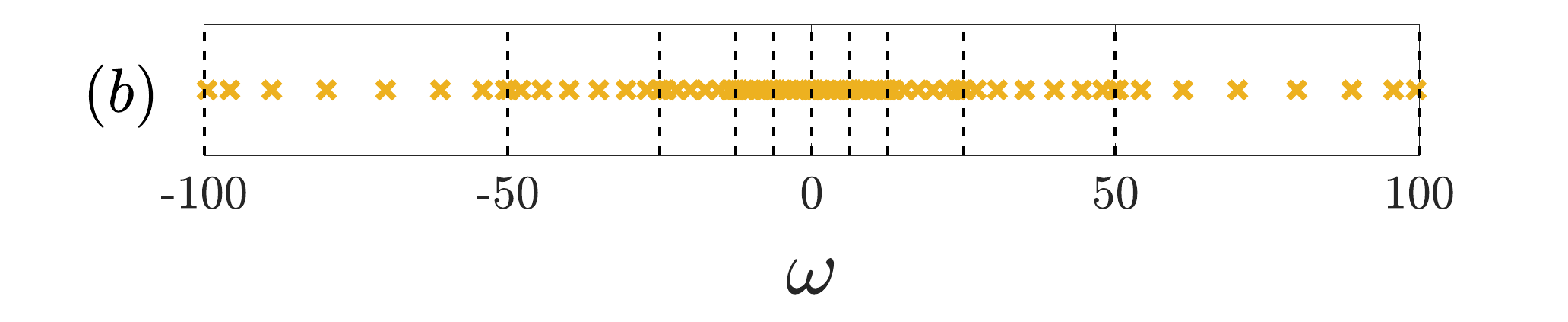}
  \includegraphics[width=.46\textwidth]{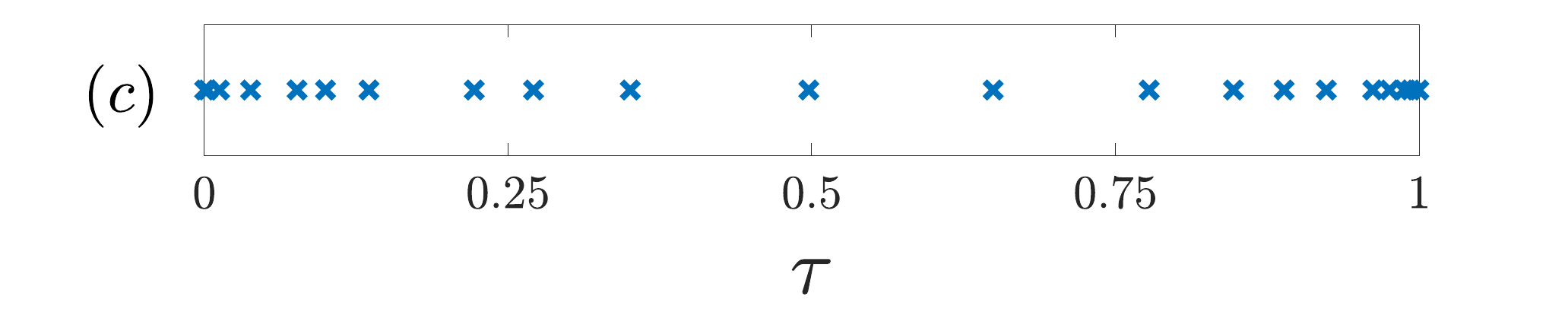}
  \includegraphics[width=.46\textwidth]{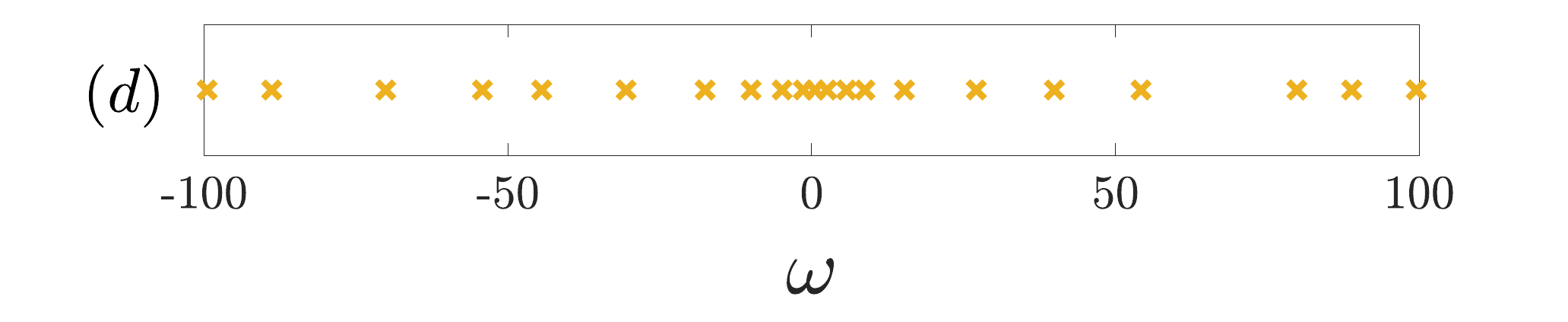}
  \caption{(a) Fine grid points $\tau_i^f$ for $p = 8$ and $n = 5$.
  Subinterval endpoints are indicated by the dashed lines. (b) Fine grid points $\omega_k^f$ for $p = 8$ and $n = 5$.
  (c) The $21$
  imaginary time DLR nodes selected from the fine grid in (a) for $\Lambda = 100$
  and $\epsilon = 10^{-6}$. For readability, we have used a smaller value of $p$ here
  than we do in practice. (d) The $21$ DLR frequencies selected from the fine grid in (b). }
\label{fig:grids}
\end{figure}

We refer to the nodes of this composite Chebyshev grid as the set of
\emph{fine grid points in $\tau$}, and denote them by
$\{\tau_j^f\}_{j=1}^M$, using the single-index notation for a composite
Chebyshev grid. Here, $M = p \times m$, where $m$ is the total number of
subintervals in the partition of $[0,1]$. Thus we can ensure that for each fixed $\omega
\in [-\Lambda,\Lambda]$, the composite Chebyshev interpolant on the fine
grid in $\tau$ is uniformly accurate to $\epsm$;
using the notation
defined in Section \ref{sec:chebyshev}, we have
\begin{equation} \label{eq:kinterptau}
  \norm{K(\tau,\omega) - \sum_{i=1}^M \wb{\ell}_i(\tau)
K(\tau_i^f,\omega)}_\infty < \epsm
\end{equation}
with $M = \OO{\log \Lambda}$.

We next consider fixed $\tau_0 \in [0,1]$, for which we have $K(\tau_0,\omega) =
e^{-\tau_0 \omega}/\paren{1+e^{-\omega}}$. This is again a family of
functions which are sharply peaked near the origin, and we discretize
$[-\Lambda,\Lambda]$ by a composite Chebyshev grid with intervals
dyadically refined towards the origin from the positive and negative
direction until the smallest panels are of unit size, which again
requires $n \sim \log_2 \Lambda$. A similar choice of $p$ is again sufficient
to obtain accuracy $\epsm$ for any $\Lambda$. An example of this grid is
shown in Figure \ref{fig:grids}b. 

The resulting \emph{fine grid points in $\omega$} are denoted by 
$\{\omega_j^f\}_{k=1}^N$, and give a composite Chebyshev interpolant of
$K(\tau,\omega)$ on $\omega \in [-\Lambda,\Lambda]$ for each $\tau$
which is uniformly accurate to $\epsm$; that is
\begin{equation} \label{eq:kinterpomega}
  \norm{K(\tau,\omega) - \sum_{j=1}^N 
  K(\tau,\omega_j^f) \wb{\ell}_j(\omega)}_\infty < \epsm
\end{equation}
with $N = \OO{\log \Lambda}$.
We note an abuse of notation: $\wb{\ell}_i(\tau)$
refers to the truncated Lagrange polynomials for the fine grid
in $\tau$, whereas $\wb{\ell}_j(\omega)$ refers to those for the fine
grid in $\omega$. Combining \eqref{eq:kinterptau} and
\eqref{eq:kinterpomega}, and possibly increasing $p$, we obtain
\begin{equation} \label{eq:kinterptauomega}
  \norm{K(\tau,\omega) - \sum_{i=1}^M \sum_{j=1}^N \wb{\ell}_i(\tau) 
  K(\tau_i^f,\omega_j^f) \wb{\ell}_j(\omega)}_\infty < \epsm.
\end{equation}

We summarize as follows. The kernel $K(\tau,\omega)$ may be
represented by composite Chebyshev interpolants of $M$ and $N$ terms in
$\tau$ and $\omega$, respectively, with subintervals chosen by
dyadically subdivision. These
representations can be constructed at a negligible cost, and directly
checked for accuracy. We have $M = p m$ and $N = p n$; in
practice, we find $m = n = \max\paren{\log_2 \Lambda,1}$ and $p = 24$ to be
sufficient to ensure double precision machine accuracy.

For simplicity of exposition, we will assume in the remainder of the article that the
interpolation errors in \eqref{eq:kinterptau}, \eqref{eq:kinterpomega},
and \eqref{eq:kinterptauomega}
are identically zero. Indeed, given these estimates, $K(\tau,\omega)$ is
indistinguishable from its interpolants to the machine precision, and we
can just as well take the interpolants as our definition of $K$.

\subsection{The DLR basis} \label{sec:basis}

Define $A \in \RR^{M \times N}$ with entries given by $A_{ij} =
K(\tau_i^f,\omega_j^f)$. Figure \ref{fig:ranks}a shows the singular values of
$A$ for a few choices of $\Lambda$. Evidently, the singular
values decay at least exponentially, so that for each fixed
$\Lambda$, the $\epsilon$-rank of $A$ is $\OO{\log(1/\epsilon)}$.
Figure \ref{fig:ranks}b shows that the rate of exponential decay
is proportional to $\log(\Lambda)$. It
follows that the $\epsilon$-rank is $\OO{\log(\Lambda)
\log(1/\epsilon)}$. A derivation and analysis of this bound will be given
in a forthcoming publication. \cite{chen_inprep}

Since the column space of $A$ characterizes the
subspace of imaginary time Green's functions defined by
\eqref{eq:tlehmann}, the low numerical rank of $A$ shows that this subspace is
finite-dimensional to a good approximation.
An equivalent observation is made
in Ref. \onlinecite{shinaoka17}, where it justifies
using the left singular vectors of a discretization of $K(\tau,\omega)$ 
as a compressed representation of
imaginary time Green's functions. This is the IR basis, which we discuss
in detail in Section \ref{sec:ir}.

\begin{figure}[t]
  \centering
  \includegraphics[width=.23\textwidth]{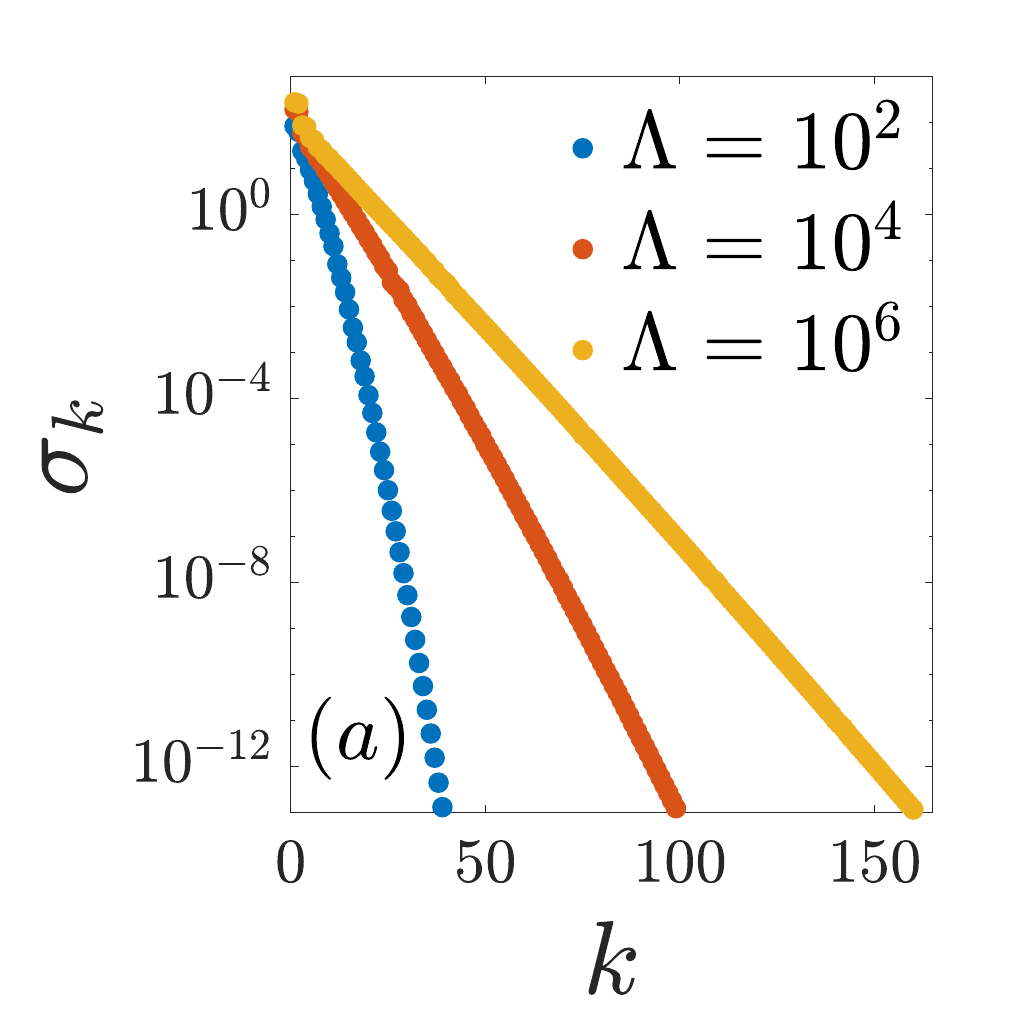}
  \includegraphics[width=.23\textwidth]{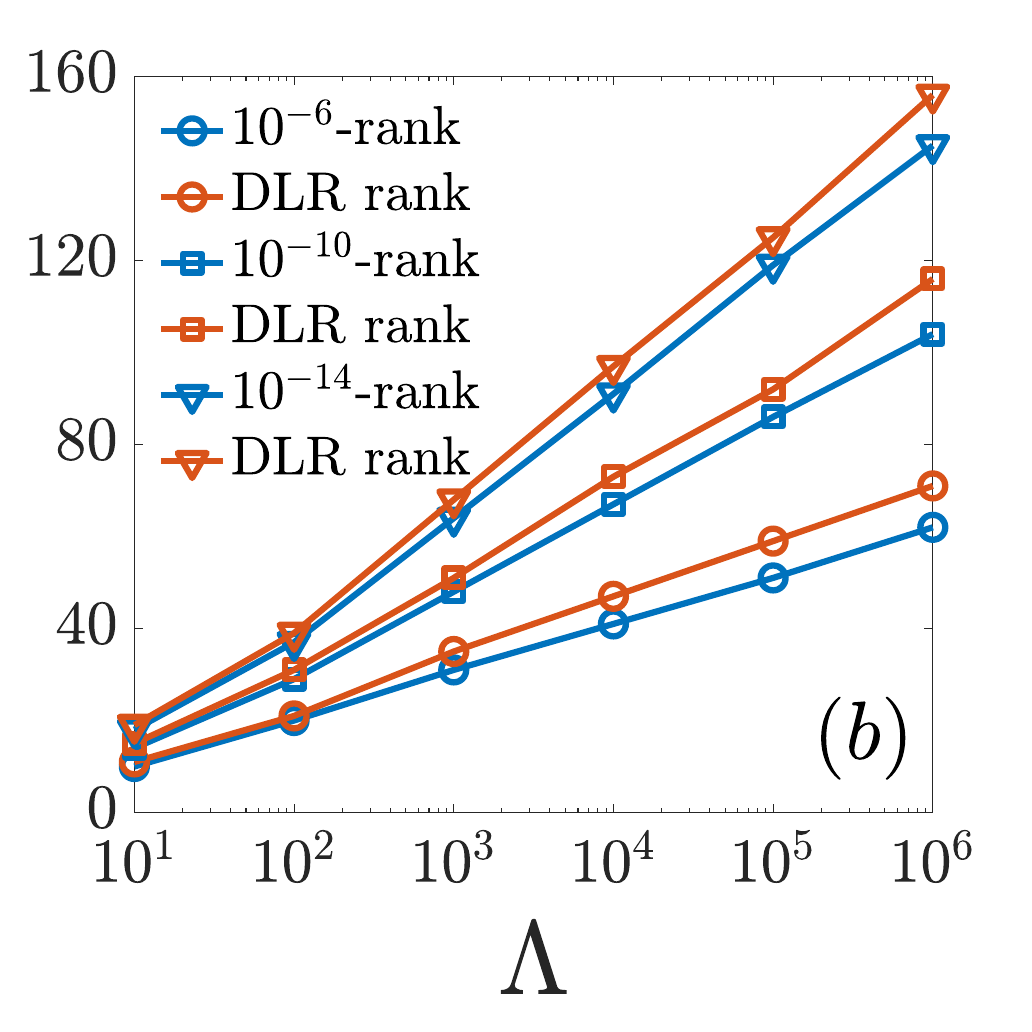}
  \caption{(a) Singular values $\sigma_k$ of the matrix $A_{ij} =
  K(\tau_i^f,\omega_j^f)$, for various $\Lambda$. (b) $\epsilon$-rank
  of $A$ against $\Lambda$ for various $\epsilon$ (blue), and the DLR rank
  (number of DLR basis functions) for the same choice of $\epsilon$
  (orange).}
\label{fig:ranks}
\end{figure}

Here, we use the ID to build a basis for
the column space of $A$. Let $\epsilon$ be a user-provided error tolerance.
We can construct a rank $r$ ID of $A$, 
\begin{equation} \label{eq:Aid}
  A = BP + \errmat,
\end{equation}
for $B \in \RR^{M \times r}$, $P \in \RR^{r
\times N}$, and $\errmat \in \RR^{M \times N}$ an error matrix with 
\[\norm{\errmat}_2 \leq \epsilon.\]
It follows from
\eqref{eq:idest} and the rapid decay of the singular values of $A$ that
$r$ will be at worst only slightly larger than the true
$\epsilon$-rank of $A$. The discrepancy is shown in Figure
\ref{fig:ranks}b,
with the blue points showing the true $\epsilon$-rank $r$ against $\Lambda$
for several $\epsilon$, and the orange points showing $r$ as obtained by
the ID with the same choices of $\epsilon$, which we refer to as the DLR
rank. This is a useful figure to refer to, as it shows the number of DLR
basis functions required to represent any imaginary time Green's
function obeying a high energy cutoff $\Lambda$ to a given $\epsilon$
accuracy.

Writing \eqref{eq:Aid} entrywise gives
\[K(\tau_i^f,\omega_j^f) = \sum_{l=1}^r K(\tau_i^f,\omega_l) P_{lj} +
\errmat_{ij}\]
for a subset $\{\omega_l\}_{l=1}^r$ of $\{\omega_j^f\}_{j=1}^N$. This
subset corresponds to the selected columns in the ID, and we refer to it
as the collection of \emph{DLR frequencies}. 
Summing both sides against $\wb{\ell}_i(\tau)$ and $\wb{\ell}_j(\omega)$
gives
\[K(\tau,\omega) = \sum_{l=1}^r K(\tau,\omega_l) \pi_l(\omega) +
\errmat(\tau,\omega)\]
with $\pi_l(\omega) = \sum_{j=1}^n  P_{lj} \wb{\ell}_j(\omega)$ and
$\errmat(\tau,\omega) = \sum_{i=1}^M \sum_{j=1}^N \wb{\ell}_i(\tau) \errmat_{ij}
\wb{\ell}_j(\omega)$. Inserting this into \eqref{eq:tlehmann}, we obtain
\begin{equation} \label{eq:dlrvalid}
  \begin{multlined}
  G(\tau) = -\sum_{l=1}^r K(\tau,\omega_l) \int_{-\Lambda}^\Lambda
\pi_l(\omega) \rho(\omega) \, d\omega \\ - \int_{-\Lambda}^\Lambda \errmat(\tau,\omega) \rho(\omega) \, d\omega.
  \end{multlined}
\end{equation}
Letting $\wh{g}_l = -\int_{-\Lambda}^\Lambda \pi_l(\omega) \rho(\omega) \,
d\omega$ gives our first main result. The bound on the error
term is proven in Appendix \ref{app:thm1}.
\begin{theorem} \label{thm:dlr}
  Suppose $G$ is given by its truncated Lehmann representation \eqref{eq:tlehmann}.
  Then there exist coefficients $\{\wh{g}_l\}_{l=1}^r$ such that
  \begin{equation} \label{eq:dlrwerr}
    G(\tau) = \sum_{l=1}^r K(\tau,\omega_l) \wh{g}_l + \errfun(\tau)
  \end{equation}
  with $\{\omega_l\}_{l=1}^r$ chosen corresponding to the selected
  columns of the ID \eqref{eq:Aid}. The error term $\errfun(\tau)$ satisfies
  \[\norm{\errfun}_\infty \leq c \epsilon \norm{\rho}_1\]
  for a constant $c$ which depends only on $p$, the Chebyshev degree
  parameter defined above.
\end{theorem}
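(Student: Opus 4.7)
\begin{proofw}[Proof sketch]
The algebraic manipulation in the text already isolates the error term as
\[
\errfun(\tau) \;=\; -\int_{-\Lambda}^{\Lambda} \errmat(\tau,\omega)\,\rho(\omega)\,d\omega,
\qquad
\errmat(\tau,\omega)=\sum_{i=1}^{M}\sum_{j=1}^{N}\wb{\ell}_i(\tau)\,\errmat_{ij}\,\wb{\ell}_j(\omega),
\]
so the plan is simply to show $\norm{\errmat(\tau,\cdot)}_\infty \le c\,\epsilon$ uniformly in $\tau$ and then apply H\"older's inequality to obtain $\norm{\errfun}_\infty \le c\,\epsilon\,\norm{\rho}_1$.

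The key step is therefore to convert the spectral-norm bound $\norm{\errmat}_2\le\epsilon$ into a pointwise bound on $\errmat(\tau,\omega)$. For each fixed $\tau,\omega$, write
\[
\errmat(\tau,\omega) \;=\; v_\tau^T\,\errmat\,u_\omega,
\]
with $v_\tau\in\RR^{M}$ the vector of truncated Lagrange values $\wb{\ell}_i(\tau)$ and $u_\omega\in\RR^{N}$ the analogous vector $\wb{\ell}_j(\omega)$. Cauchy--Schwarz then gives $|\errmat(\tau,\omega)|\le \epsilon\,\norm{v_\tau}_2\norm{u_\omega}_2$.

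The main technical step is bounding $\norm{v_\tau}_2$ and $\norm{u_\omega}_2$ by a constant that depends only on $p$, not on $\Lambda$. Because the $\wb{\ell}_i$ are supported on a single Chebyshev panel, for any $\tau$ at most $p$ entries of $v_\tau$ are nonzero, namely the Lagrange values on whichever panel contains $\tau$. Since Chebyshev nodes are affinely invariant, the Lebesgue constant on each panel equals the standard Chebyshev Lebesgue constant $\Lambda_p = \OO{\log p}$, and in particular $\sum_i |\wb{\ell}_i(\tau)| \le \Lambda_p$ and $\max_i |\wb{\ell}_i(\tau)| \le \Lambda_p$. Combining these in the elementary inequality $\norm{v_\tau}_2^2 \le \norm{v_\tau}_1 \norm{v_\tau}_\infty$ yields $\norm{v_\tau}_2 \le \Lambda_p$, and identically $\norm{u_\omega}_2 \le \Lambda_p$. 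Hence $|\errmat(\tau,\omega)| \le \Lambda_p^2\,\epsilon$ uniformly.

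Plugging into $|\errfun(\tau)|\le\norm{\errmat(\tau,\cdot)}_\infty\norm{\rho}_1$ gives $\norm{\errfun}_\infty \le \Lambda_p^2\,\epsilon\,\norm{\rho}_1$, which is the desired estimate with $c=\Lambda_p^2$ depending only on $p$. The only subtle point in the argument is the $\Lambda$-independence of the Lebesgue-constant bound; once one notes that support localization of $\wb{\ell}_i$ on a single panel reduces the count of nonzero entries from $M=\OO{\log\Lambda}$ to $p$, the estimate follows without any logarithmic factor in $\Lambda$ creeping into $c$.
\end{proofw}
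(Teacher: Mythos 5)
Your proof is correct, and it takes a genuinely different route from the paper's. Both proofs rely on the same structural observation -- that the truncated Lagrange polynomials $\wb{\ell}_i$ are supported on single panels, so only $p$ of the entries of $v_\tau$ (resp. $u_\omega$) are nonzero, which keeps the constant $\Lambda$-independent -- but they proceed differently from there. You bound the function $\errmat(\tau,\omega) = v_\tau^T \errmat\, u_\omega$ pointwise in both variables via the vector Cauchy--Schwarz inequality and then conclude with H\"older's inequality $|\int \errmat(\tau,\cdot)\rho| \le \norm{\errmat(\tau,\cdot)}_\infty \norm{\rho}_1$. The paper instead integrates $\rho$ against $\wb{\ell}_j(\omega)$ first, reducing the problem to a matrix-vector Cauchy--Schwarz bound $\norm{\errmat}_2 \norm{v_\tau}_2 \norm{w}_2$ with $w_j = \int \wb{\ell}_j \rho\, d\omega$. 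The paper also invests in a bespoke Chebyshev identity (Lemma~\ref{lem:sumsq}, proved in Appendix~\ref{app:sumsq}) showing $\sum_{k=1}^p \ell_k^2(x) \le 2$, whereas you use the more familiar Lebesgue-constant machinery $\Lambda_p = \OO{\log p}$ together with the interpolation inequality $\norm{v}_2^2 \le \norm{v}_1 \norm{v}_\infty$. Your approach is arguably more elementary, as it avoids proving a new Chebyshev identity from scratch and rests entirely on textbook tools of approximation theory. The tradeoff is a somewhat looser constant: your $c = \Lambda_p^2$ gives approximately $9$ at $p = 24$, while the paper's $c \approx \sqrt{2p}$ gives approximately $7$; both satisfy the stated $c < 10$, and both depend only on $p$ as required, so the difference is immaterial. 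One small note: you could tighten your $\norm{v_\tau}_\infty$ factor, since at the Chebyshev nodes themselves $\norm{v_\tau}_\infty = 1$, and even off-node the individual Lagrange polynomials obey sharper bounds than $\Lambda_p$, but this is cosmetic.
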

The constant $c$ is mild and computable; for $p = 24$, it is less than
$10$. 
The $r$ functions $K(\tau,\omega_l)$ are referred to as the DLR basis
functions, and are characterized solely by the DLR frequencies $\omega_l$ selected
in the ID. An example of a set of DLR frequencies, selected from the fine grid
shown in Figure \ref{fig:grids}b with $\Lambda = 100$ and $\epsilon =
10^{-6}$, is shown in Figure \ref{fig:grids}d.

We note that in practice, it is not necessary to form the full ID in
order to obtain the DLR basis, since
we do not use the projection matrix $P$. Rather, we only need to
identify the DLR frequencies $\{\omega_l\}_{l=1}^r$. The selection of
the DLR frequencies takes place in the pivoted QR step of the ID
algorithm. Thus to construct the DLR basis, we simply apply the
rank-revealing pivoted QR algorithm to the columns of $A$ with a tolerance
$\epsilon$.

\subsection{The imaginary time DLR grid} \label{sec:dlrpts}

In general, the spectral density $\rho$ is not known a priori, so we cannot find the coefficients
$\wh{g}_l$ in \eqref{eq:dlrwerr} using the construction above. Rather, we
will identify a set of $r$
\emph{imaginary time interpolation nodes} $\tau_k$ so that expansion
coefficients can be recovered from the values $g_k = G(\tau_k)$ by solving an interpolation
problem using the basis functions $K(\tau,\omega_l)$.

Consider the matrix $B \in \RR^{M \times r}$ introduced above, with entries $B_{il} =
K(\tau_i^f,\omega_l)$. Forming the ID of $B^T$ gives
\begin{equation} \label{eq:Bid}
  B = R \kmat,
\end{equation}
with $\kmat \in \RR^{r \times r}$ consisting of selected rows of $B$, and $R
\in \RR^{M \times r}$ the associated projection matrix. The $r$ selected
rows of $B$ correspond to a subset $\{\tau_k\}_{k=1}^r$ of the fine grid
points $\{\tau_i^f\}_{i=1}^M$ in imaginary time, which we refer to as
the imaginary time DLR grid. We have
\begin{equation} \label{eq:kmatdef}
  \kmat_{kl} = K(\tau_k,\omega_l).
\end{equation}
Writing \eqref{eq:Bid} entrywise and summing over the truncated Lagrange
polynomials in $\tau$, we obtain
\begin{equation} \label{eq:krecover}
  K(\tau,\omega_l) = \sum_{k=1}^r \gamma_k(\tau) K(\tau_k,\omega_l)
  \equiv \sum_{k=1}^r \gamma_k(\tau) \kmat_{kl}
\end{equation}
with $\gamma_k(\tau) = \sum_{i=1}^M \wb{\ell}_i(\tau) R_{ik}$.
Equation \eqref{eq:krecover} tells us that the DLR basis functions can be
recovered from their values at the imaginary time DLR grid points. It will follow that
a Green's function can similarly be recovered
from its values on this grid. An example of an
imaginary time DLR grid, selected from the
fine grid shown in Figure \ref{fig:grids}a with $\Lambda = 100$ and
$\epsilon = 10^{-6}$, is
shown in Figure \ref{fig:grids}c.

The recovery may be carried out in practice by computing the values $g_k
= G(\tau_k)$ for $k=1,\ldots,r$, solving the interpolation problem
\begin{equation} \label{eq:ginterp}
  g = \kmat \wh{g}
\end{equation}
for {\it DLR coefficients} $\wh{g}_k$, and using
\begin{equation} \label{eq:gapprox}
  \gdlr(\tau) = \sum_{l=1}^r K(\tau,\omega_l) \, \wh{g}_k
\end{equation}
as an approximation of $G$. Here, $g, \wh{g} \in \RR^r$.
Although it is tempting to compare \eqref{eq:gapprox} with
\eqref{eq:dlrwerr} and assume $\gdlr \approx G$ holds to high accuracy,
this is not guaranteed a priori. Indeed, if the interpolation nodes $\tau_k$
were not selected carefully, this would not be the case. However, the
following stability result, proven in Appendix \ref{app:thm2}, leads to an accuracy guarantee.
\begin{lemma} \label{lem:dlrstability}
  Suppose $\gdlr$ and $\hdlr$ are given by
  \[\gdlr(\tau) = \sum_{l=1}^r K(\tau,\omega_l) \wh{g}_l\]
  and 
  \[\hdlr(\tau) = \sum_{l=1}^r K(\tau,\omega_l) \wh{h}_l,\]
  respectively, with $\{\omega_l\}_{l=1}^r$ chosen as above. Let $g, h
  \in \RR^r$ be given by $g_k = \gdlr(\tau_k)$,
  $h_k = \hdlr(\tau_k)$, with $\{\tau_k\}_{k=1}^r$ the imaginary time DLR grid
  determined by the selected rows of the ID \eqref{eq:Bid}. Then
  \[\norm{\gdlr-\hdlr}_\infty \leq \sqrt{2} \norm{R}_2 \norm{g-h}_2. \]
\end{lemma}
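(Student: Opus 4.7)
The plan is to convert a bound on $\gdlr - \hdlr$ in the supremum norm into one on the sample vector $g - h$ in the Euclidean norm, by using the identity \eqref{eq:krecover} to eliminate the coefficients $\wh{g}_l - \wh{h}_l$ in favor of sample differences. The key observation is that \eqref{eq:krecover} already expresses each basis function $K(\tau,\omega_l)$ as a linear combination of the entries $\kmat_{kl}$ with weight functions $\gamma_k(\tau)$, so substituting this identity into the difference $\gdlr(\tau) - \hdlr(\tau)$ will naturally produce the matrix product $\kmat(\wh{g} - \wh{h})$, which by the definitions of $g$ and $h$ equals $g - h$.

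Concretely, I would first write $\gdlr(\tau) - \hdlr(\tau) = \sum_{l=1}^r K(\tau,\omega_l)(\wh{g}_l - \wh{h}_l)$ and substitute \eqref{eq:krecover} to obtain
\[\gdlr(\tau) - \hdlr(\tau) = \sum_{k=1}^r \gamma_k(\tau) \sum_{l=1}^r \kmat_{kl} (\wh{g}_l - \wh{h}_l).\]
By \eqref{eq:kmatdef} together with the definitions of $g_k$ and $h_k$, the inner sum is exactly $\gdlr(\tau_k) - \hdlr(\tau_k) = g_k - h_k$, so
\[\gdlr(\tau) - \hdlr(\tau) = \sum_{k=1}^r \gamma_k(\tau)(g_k - h_k).\]
Applying Cauchy--Schwarz in $k$ then gives $\abs{\gdlr(\tau) - \hdlr(\tau)} \leq \norm{\gamma(\tau)}_2 \norm{g-h}_2$ pointwise in $\tau$, and taking the supremum over $\tau \in [0,1]$ yields $\norm{\gdlr - \hdlr}_\infty \leq \paren{\sup_\tau \norm{\gamma(\tau)}_2} \norm{g-h}_2$.

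It remains to bound $\sup_\tau \norm{\gamma(\tau)}_2$. From $\gamma_k(\tau) = \sum_{i=1}^M \wb{\ell}_i(\tau) R_{ik}$ we have $\gamma(\tau) = R^T \wb{\ell}(\tau) \in \RR^r$, so $\norm{\gamma(\tau)}_2 \leq \norm{R}_2 \norm{\wb{\ell}(\tau)}_2$. Since $\wb{\ell}_i(\tau) = \ell_i(\tau) \chi_j(\tau)$ vanishes for every $i$ outside the unique panel $[a_j,b_j]$ containing $\tau$, the norm $\norm{\wb{\ell}(\tau)}_2^2$ reduces to a sum of squared values of the $p$ Chebyshev Lagrange polynomials on that one subinterval. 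I expect this final step to be the main obstacle: the target constant $\sqrt{2}$ forces one to appeal to the classical pointwise bound $\sum_{i=1}^p \ell_i(x)^2 \leq 2$ for the Lagrange basis at Chebyshev nodes, which holds uniformly on the interval and is invariant under the affine rescaling from $[a_j,b_j]$ back to the reference interval. Assembling the pieces produces $\norm{\gdlr - \hdlr}_\infty \leq \sqrt{2} \, \norm{R}_2 \norm{g-h}_2$, as stated.
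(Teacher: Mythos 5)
Your proof is correct and follows essentially the same route as the paper's: rewrite $\gdlr(\tau)-\hdlr(\tau)$ as $\sum_k \gamma_k(\tau)(g_k-h_k)$ via \eqref{eq:krecover}, apply Cauchy--Schwarz, bound $\norm{\gamma(\tau)}_2 \le \norm{R}_2 \norm{\wb{\ell}(\tau)}_2$, and invoke the bound $\sum_{k=1}^p \ell_k^2(x) \le 2$ for Chebyshev Lagrange polynomials (which the paper establishes as Lemma~\ref{lem:sumsq}, with the slightly sharper constant $2-1/p$). No meaningful differences.
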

The ID guarantees that $\norm{R}_2$ is controlled; in particular, we
have the estimate \eqref{eq:idpest}. Since $M = \OO{\log \Lambda}$
and $r$ is small, this factor in the estimate is small in
practice.
With Lemma \ref{lem:dlrstability} in hand, we consider the following
practical question: if a Green's function $G$ is sampled at the DLR grid
points with some error, how accurate is the approximation $\gdlr$ given
by \eqref{eq:gapprox}, with the coefficients $\rho_l$ obtained by
solving the interpolation problem \eqref{eq:ginterp}?

\begin{theorem} \label{thm:accuracy}
Let $G$ be a Green's function given by a truncated Lehmann representation
  \eqref{eq:tlehmann}.  Let $g \in \RR^r$ be a vector of samples of $G$ at
  the imaginary time DLR grid points $\tau_k$, up to an error $\eta \in \RR^r$: $g_k =
  G(\tau_k) + \eta_k$. Suppose
  $\wh{g} \in \RR^r$ solves the corresponding interpolation problem \eqref{eq:ginterp}
  up to a residual error $\alpha$: $\kmat \wh{g} - g = \alpha$, with
  $\alpha \in \RR^r$. Let $\gdlr$ be given by
  \[\gdlr(\tau) = \sum_{l=1}^r K(\tau,\omega_l) \wh{g}_l.\]
  Then
  \begin{multline*}
  \norm{G-\gdlr}_\infty \leq c \paren{1+\sqrt{2r} \norm{R}_2}
  \norm{\rho}_1 \epsilon \\ + \sqrt{2} \norm{R}_2 \paren{\norm{\eta}_2
  + \norm{\alpha}_2}
  \end{multline*}
  with $c$ the constant from Theorem \ref{thm:dlr}.
\end{theorem}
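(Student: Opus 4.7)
The plan is to decompose the error $G - \gdlr$ into two parts: the intrinsic DLR representation error guaranteed by Theorem \ref{thm:dlr}, and a stability-type error coming from the fact that our coefficients $\wh{g}$ are not the ``ideal'' coefficients but rather come from an approximate interpolation problem. The stability part will be controlled by invoking Lemma \ref{lem:dlrstability}.

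\textbf{Step 1: Introduce the ideal DLR.} By Theorem \ref{thm:dlr}, there exist coefficients $\{\wh{g}_l^\star\}_{l=1}^r$ such that, setting
\[
\hdlr(\tau) = \sum_{l=1}^r K(\tau,\omega_l)\,\wh{g}_l^\star,
\]
we have $G(\tau) = \hdlr(\tau) + \errfun(\tau)$ with $\norm{\errfun}_\infty \le c\,\epsilon\,\norm{\rho}_1$. Then
\[
G - \gdlr = (G - \hdlr) + (\hdlr - \gdlr) = \errfun + (\hdlr - \gdlr),
\]
so by the triangle inequality $\norm{G-\gdlr}_\infty \le c\,\epsilon\,\norm{\rho}_1 + \norm{\hdlr-\gdlr}_\infty$.

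\textbf{Step 2: Apply the stability lemma.} Both $\hdlr$ and $\gdlr$ are linear combinations of the DLR basis functions, so Lemma \ref{lem:dlrstability} gives
\[
\norm{\hdlr - \gdlr}_\infty \le \sqrt{2}\,\norm{R}_2\,\norm{h - g_\star}_2,
\]
where $h_k = \hdlr(\tau_k)$ and $g_{\star,k} = \gdlr(\tau_k)$. I now need to bound the vector $h - g_\star$ in $\ell^2$ using the three controlled quantities: the representation error $\errfun$, the sampling noise $\eta$, and the interpolation residual $\alpha$.

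\textbf{Step 3: Relate $h-g_\star$ to $\errfun$, $\eta$, $\alpha$.} At the DLR nodes,
\[
h_k = \hdlr(\tau_k) = G(\tau_k) - \errfun(\tau_k),
\]
while $g_{\star,k} = (\kmat\,\wh{g})_k = g_k + \alpha_k = G(\tau_k) + \eta_k + \alpha_k$ by the definitions of $g$ and $\alpha$. Subtracting gives $h_k - g_{\star,k} = -\errfun(\tau_k) - \eta_k - \alpha_k$, and another triangle inequality yields
\[
\norm{h - g_\star}_2 \le \norm{(\errfun(\tau_k))_{k=1}^r}_2 + \norm{\eta}_2 + \norm{\alpha}_2 \le \sqrt{r}\,c\,\epsilon\,\norm{\rho}_1 + \norm{\eta}_2 + \norm{\alpha}_2,
\]
where the last step uses $\norm{v}_2 \le \sqrt{r}\,\norm{v}_\infty$ for $v \in \RR^r$ together with the bound on $\norm{\errfun}_\infty$ from Theorem \ref{thm:dlr}.

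\textbf{Step 4: Combine.} Substituting Step 3 into Step 2 and adding the $c\,\epsilon\,\norm{\rho}_1$ from Step 1 gives
\[
\norm{G-\gdlr}_\infty \le c\,\norm{\rho}_1\,\epsilon\,\bigl(1 + \sqrt{2r}\,\norm{R}_2\bigr) + \sqrt{2}\,\norm{R}_2\bigl(\norm{\eta}_2 + \norm{\alpha}_2\bigr),
\]
which is the claimed bound. The argument is essentially a routine triangle-inequality bookkeeping; there is no real obstacle, since all the heavy lifting has been done by Theorem \ref{thm:dlr} (existence of a near-optimal DLR) and Lemma \ref{lem:dlrstability} (supremum norm controlled by nodal values via $\norm{R}_2$). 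The only mild care needed is to pass from the $\ell^\infty$ bound on $\errfun$ to an $\ell^2$ bound on its nodal values, which accounts for the factor $\sqrt{r}$ multiplying $\norm{R}_2$ in the first term of the final estimate.
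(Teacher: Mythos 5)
Your proof is correct and follows essentially the same route as the paper's: decompose $G-\gdlr = \errfun + (\hdlr-\gdlr)$ using Theorem~\ref{thm:dlr}, apply Lemma~\ref{lem:dlrstability} to the second piece, express the nodal difference via $\errfun(\tau_k)$, $\eta_k$, $\alpha_k$, and use $\norm{v}_2\le\sqrt{r}\norm{v}_\infty$ to pick up the $\sqrt{r}$ factor. The only difference from the paper is cosmetic (you name the intermediate vectors $h$ and $g_\star$ explicitly), and your bookkeeping reproduces the stated bound exactly.
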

\begin{proof}
  Theorem \ref{thm:dlr} guarantees that
  \[G(\tau)= \hdlr(\tau) + \errfun(\tau)\]
  for $\hdlr$ a DLR expansion and $\errfun$ a
  controlled error. We also have that
  \[\gdlr(\tau_k) = g_k + \alpha_k = G(\tau_k) + \eta_k + \alpha_k.\]
  These expressions, and Lemma \ref{lem:dlrstability}, give
  \begin{align*}
    \norm{G-\gdlr}_\infty &= \norm{\errfun+\hdlr-\gdlr}_\infty \\
    &\leq \norm{\errfun}_\infty + \sqrt{2} \norm{R}_2
  \norm{\{\errfun(\tau_k)\}_{k=1}^r +
  \eta + \alpha}_2 \\
    &\begin{multlined} \leq  \paren{1+\sqrt{2r} \norm{R}_2}
      \norm{\errfun}_\infty \\+ \sqrt{2}
    \norm{R}_2 \paren{\norm{\eta}_2 + \norm{\alpha}_2}.\end{multlined}
  \end{align*}
The result follows from the bound on $\norm{\errfun}_\infty$ given in
  Theorem \ref{thm:dlr}.
\end{proof}
It is expected, and our numerical experiments
confirm, that typically $\norm{\alpha}_2 \approx \norm{\eta}_2$.
Thus the accuracy of the approximation \eqref{eq:gapprox} is indeed
determined by the user-input error tolerance $\epsilon$, and is
limited only by the accuracy to which $G$ can be evaluated. 
We remark that this holds true despite the fact that the matrix
$\kmat$ is
ill-conditioned, and therefore that the computed DLR coefficients
$\wh{g}_l$ are not expected to be close to those appearing in
\eqref{eq:dlrwerr}. Indeed, this ill-conditioning reflects a fundamental
non-uniqueness in $\wh{g}_l$. However, it will not prevent a standard linear
solver from identifying a solution with small residual, and therefore
does not imply any difficulty in accurately representing $G$.

\subsection{DLR in the Matsubara frequency domain} \label{sec:matpts}

A DLR can be transformed to the Matsubara frequency domain analytically.
Indeed, we have
\begin{equation} \label{eq:kn}
  K(i\nu_n,\omega) = \int_0^1 K(\tau,\omega) e^{-i \nu_n \tau} d \tau
= \paren{\omega + i \nu_n}^{-1},
\end{equation}
with Matsubara frequency points
\[i \nu_n = 
\begin{cases}
  i (2n+1) \pi & \text{for fermionic Green's functions} \\
  i 2n \pi & \text{for bosonic Green's functions.}
\end{cases}
\]
A DLR expansion $G(\tau) = \sum_{l=1}^r K(\tau,\omega_l) \wh{g}_l$ therefore transforms to the Matsubara frequency domain as 
\[G(i \nu_n) = \sum_{l=1}^r K(i \nu_n,\omega_l) \wh{g}_l.\]

We can construct a set of \emph{Matsubara frequency interpolation nodes} using the ID.
As in the previous section, we simply apply the ID to the rows of the
matrix with entries $K(i\nu_n,\omega_l)$, for $n =
-\nmax,\ldots,\nmax$, and $l = 1,\ldots,r$. Here $\nmax$ is a chosen
Matsubara frequency cutoff. This process returns $r$ selected Matsubara frequency
interpolation nodes $i\nu_{n_k}$. As before, it is not necessary to form the full ID, but only
to use the pivoted QR algorithm to identify the selected nodes. The DLR coefficients can be recovered by solving
the interpolation problem
\begin{equation} \label{eq:interpmatsu}
  G(i\nu_{n_k}) = \sum_{l=1}^r K(i\nu_{n_k},\omega_l) \wh{g}_l,
\end{equation}
for $k=1,\ldots,r$, which is analogous to \eqref{eq:ginterp}.
One must ensure that the
Matsubara frequency nodes have been converged with respect to $\nmax$, and
in practice we find $\nmax \sim \Lambda$ is usually a sufficient
cutoff.

This procedure requires carrying out the pivoted QR algorithm on the
rows of a
$2\nmax+1 \times r$ matrix, and typically $\nmax = \OO{\Lambda}$.
It is 
more expensive than the procedure to select the imaginary time DLR grid
points, which uses the pivoted QR algorithm on an $M \times r$ matrix, with $M =
\OO{\log \Lambda}$. However, it is still quite fast in
practice for
moderate values of $\Lambda$. If it were to become a bottleneck, one
could design a more efficient scheme to select the Matsubara frequency
interpolation nodes from a smaller subset of the full Matsubara frequency
grid $-\nmax \leq n \leq \nmax$.

\subsection{Summary of DLR algorithms} \label{sec:algsummary}

We pause to summarize the practical procedures we have described to
build and work with the DLR.

\paragraph{Construction of the DLR basis}

To construct the DLR basis for a given choice of $\Lambda$ and
$\epsilon$, we first discretize the kernel
$K(\tau,\omega)$ on a composite Chebyshev grid to obtain the matrix with entries $A_{ij} =
K(\tau_i^f,\omega_j^f)$. We then apply the pivoted QR algorithm, with an
error tolerance $\epsilon$, to the
columns of $A$. The pivots correspond to a set of $r$ DLR frequencies $\omega_l$,
where $r$, the so-called DLR rank, is the number of basis functions
required to represent the full subspace characterized by the truncated
Lehmann integral operator to an accuracy
approximately $\epsilon$. The DLR
basis functions are simply given by $\{K(\tau,\omega_l)\}_{l=1}^r$.

\paragraph{DLR from imaginary time values}

To obtain the $r$ imaginary time interpolation nodes $\tau_k$, we simply
apply the pivoted QR algorithm to the rows of the matrix with entries
$B_{il} = K(\tau_i^f,\omega_l)$. The pivots correspond to the
interpolation nodes. To obtain the DLR coefficients
$\wh{g}_l$ of a Green's function $G(\tau)$, we compute the $r$ values
$g_k = G(\tau_k)$ and solve the $r
\times r$ interpolation problem \eqref{eq:ginterp}.

\paragraph{DLR from Matsubara frequency values}

To obtain the $r$ Matsubara frequency interpolation nodes $i\nu_{n_k}$, we apply
the pivoted QR algorithm in the same manner to the rows of the matrix with entries
$K(i\nu_{n},\omega_l)$, where $-\nmax \leq n \leq \nmax$ for some choice
of $\nmax$. In practice, we find $\nmax = \Lambda$ to be
sufficient in most cases, but $\nmax$ can be increased until the selected
Matsubara frequency nodes no longer change. To obtain the DLR expansion
coefficients $\wh{g}_l$ of a Green's function $G(i\nu_n)$ in the Matsubara
frequency domain, we solve the interpolation problem
\eqref{eq:interpmatsu}.

\paragraph{Transforming between imaginary time and Matsubara frequency
domains}

The DLR coefficients for the representation of a given Green's
function in the imaginary time and Matsubara frequency domains are the
same; one simply takes the Fourier transform of the DLR in imaginary
time explicitly using \eqref{eq:kn} to obtain the DLR in Matsubara
frequency, and inverts the transform explicitly to go in the opposite
direction. Thus, having obtained DLR coefficients for a
Green's function, the representation can be evaluated in either domain.

\paragraph{A remark on the selection of $\Lambda$ and $\epsilon$}
\label{par:epsandlambda}

In our framework, both $\Lambda$ and $\epsilon$ are user-determined parameters
which control the accuracy of a given representation, and each choice of
$\Lambda$ and $\epsilon$ yields some basis of $r$ functions
which should then all be used. This is different from many typical
methods, like orthogonal polynomial approximation, in which one simply
converges a given calculation with respect to the number $m$ of basis
functions directly.
The inclusion of such a user-determined accuracy parameter $\epsilon$ is a
desirable feature of many modern algorithms used in scientific
computing, which enables automatic data compression with an accuracy guarantee.

In practice, to obtain a desired accuracy with the smallest possible number of basis functions, one should choose $\epsilon$ according to that desired accuracy, and not
smaller. One should then converge with respect to $\Lambda$, which
describes the frequency content of the problem, and is therefore
more analogous to the parameter $m$ in the Legendre polynomial method.
This process is illustrated, for example, by Figure \ref{fig:err_sc_1e2},
which is discussed in the next subsection.

\subsection{Numerical examples} \label{sec:numex}

We can test the algorithms described above by evaluating a known Green's
function on the imaginary time or Matsubara frequency DLR grids,
recovering the corresponding DLR coefficients, and measuring the accuracy of the
resulting DLR expansion by computing its error against
$G(\tau)$. We use fermionic Green's functions for all examples.

We first test the imaginary time sampling approach using the Green's
function corresponding to the spectral
density $\rho(\omega) = \frac{2}{\pi} \sqrt{1-\omega^2} \theta\paren{1-\omega^2}$. We
fix $\epsilon$, and measure the $L^\infty$ error of the computed DLR for several
choices of $\Lambda$.
Results for $\beta = 10^4$ were already presented in Figure
\ref{fig:semicirc}c, in which we plot error against the number $r$ of basis functions
obtained using $\Lambda = 0.2 \times 10^{4}, 0.4 \times 10^{4}, \ldots, 1.2
\times 10^{4}$, for $\epsilon = 10^{-6}$, $10^{-10}$, and $10^{-14}$.
We observe rapid convergence with $r$ to error $\epsilon$ in each
case.

In Figures \ref{fig:err_sc_1e2} and \ref{fig:err_sc_1e6}, respectively, we
present similar plots for $\beta = 10^2$ and $\beta = 10^6$. In Figures
\ref{fig:err_sc_1e2}c and \ref{fig:err_sc_1e6}c, we plot the error against $\Lambda$
directly. These plots demonstrate the method as it is used in practice;
$\epsilon$ and $\Lambda$, not $r$, are chosen directly by the user in our
framework. It can be seen from Figures \ref{fig:err_sc_1e2}b and
\ref{fig:err_sc_1e6}b that
choosing $\epsilon$ to be smaller than the actual desired accuracy
simply yields a larger basis than is needed, as was discussed in
Section \ref{sec:algsummary}\,e.

\begin{figure*}[t]
  \centering
  \includegraphics[width=.23\textwidth]{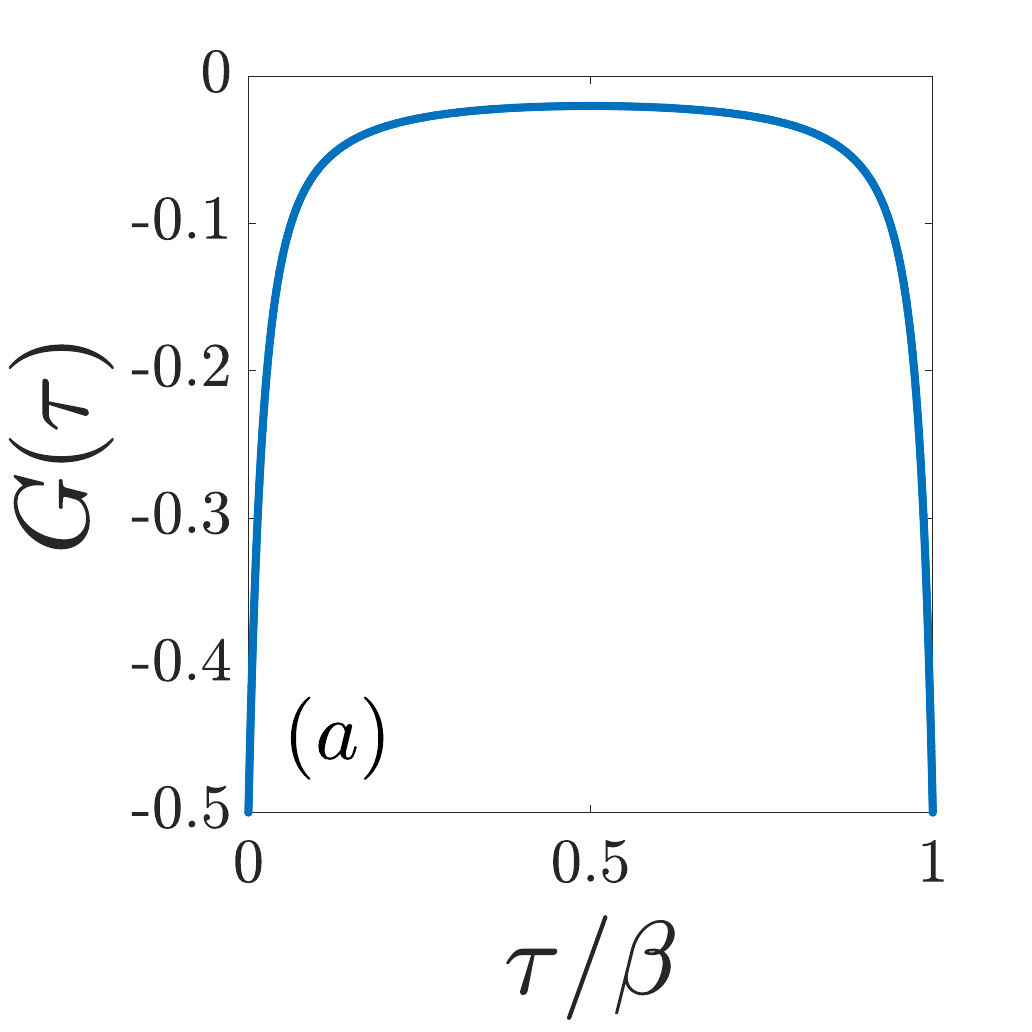}
  \includegraphics[width=.23\textwidth]{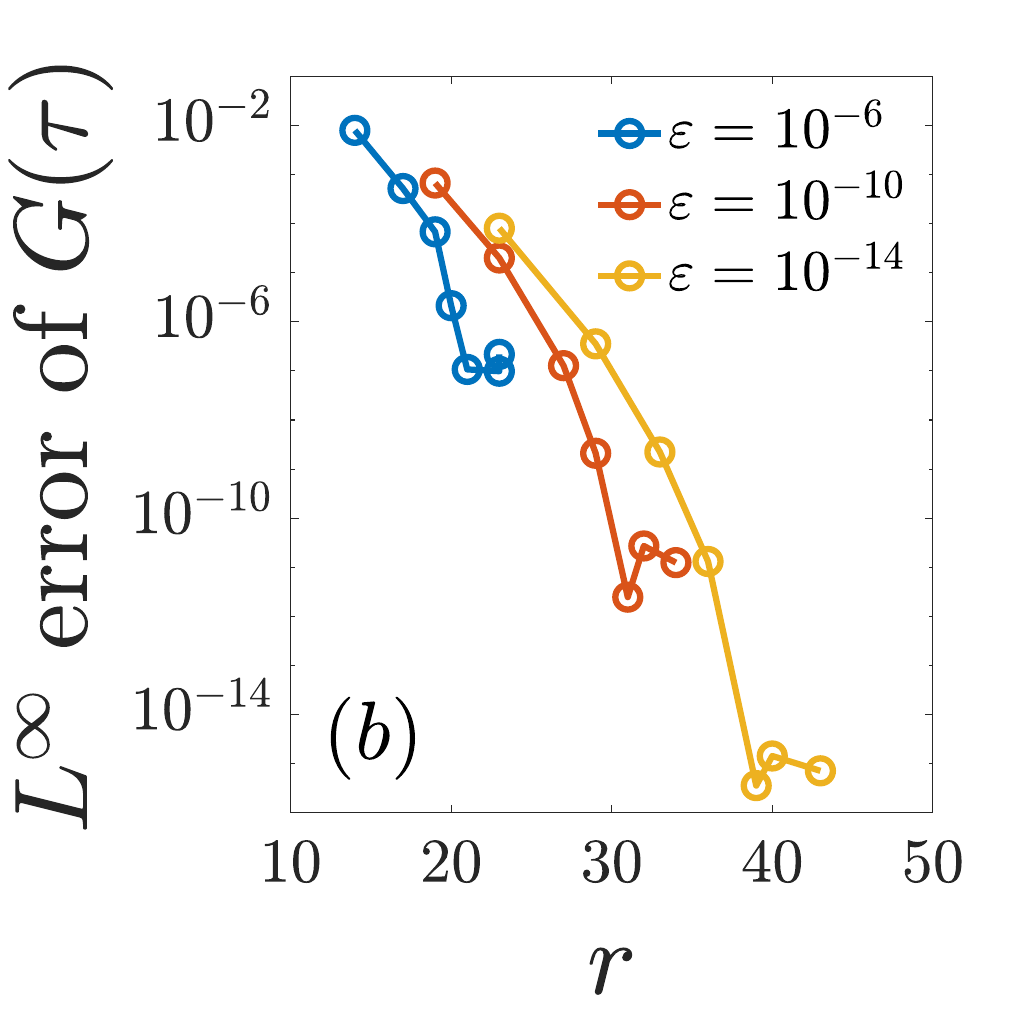}
  \includegraphics[width=.23\textwidth]{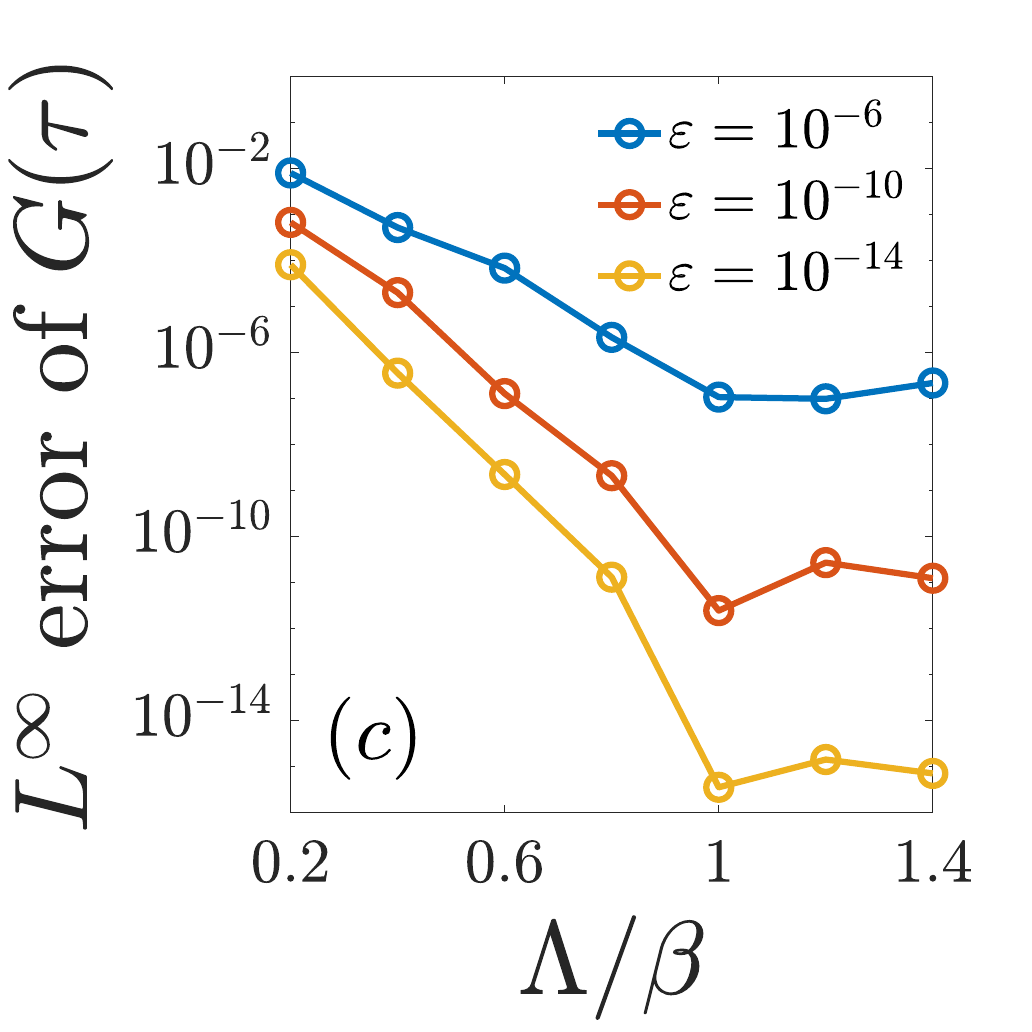}

  \caption{$L^\infty$ error of the DLR approximation of $G(\tau) =
  -\frac{2}{\pi} \int_{-1}^1 K(\tau,\omega) \sqrt{1-\omega^2} \, d\omega$ obtained
  using imaginary time sampling for $\beta = 10^2$ and several choices
  of $\epsilon$. (a) $G(\tau)$. (b) Error versus $r$, the number of basis
  functions. (c) Error versus $\Lambda$.}
  \label{fig:err_sc_1e2}
\end{figure*}

\begin{figure*}[t]
  \centering
  \includegraphics[width=.23\textwidth]{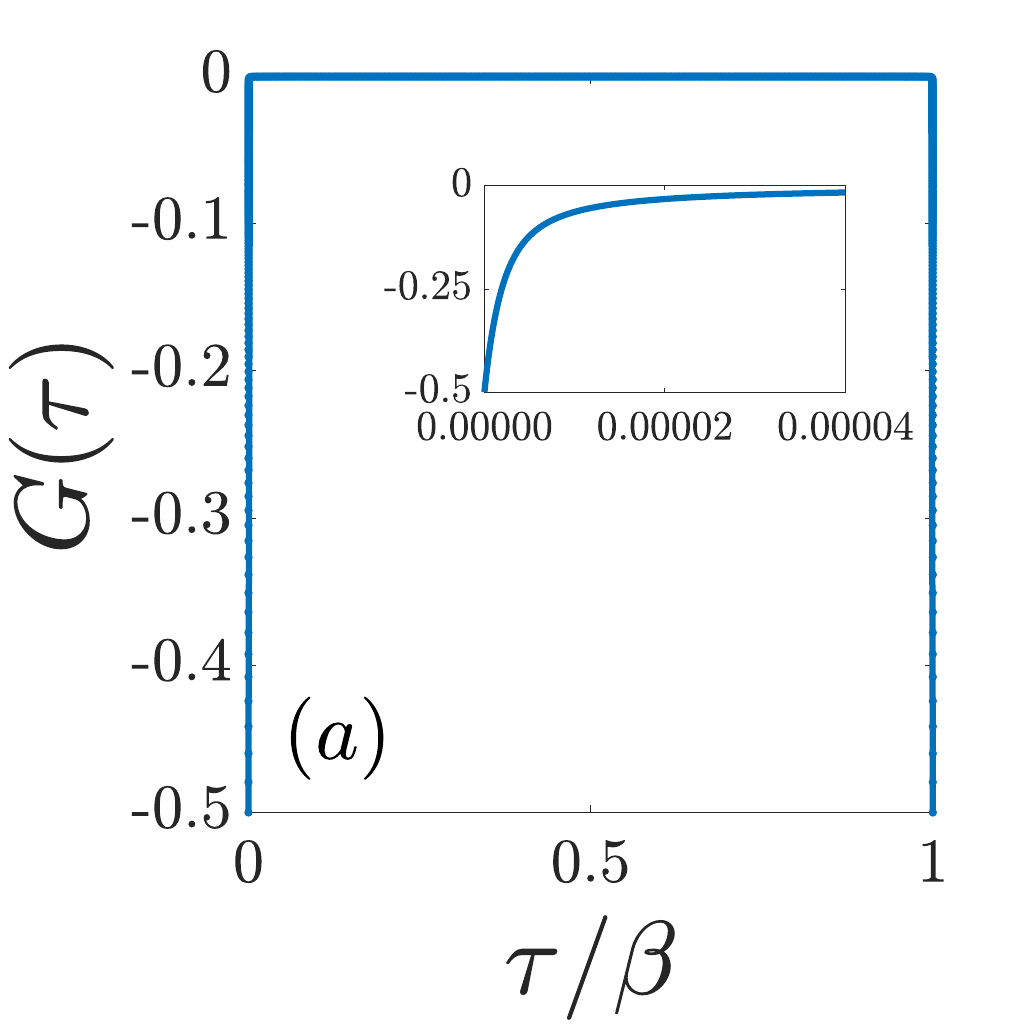}
  \includegraphics[width=.23\textwidth]{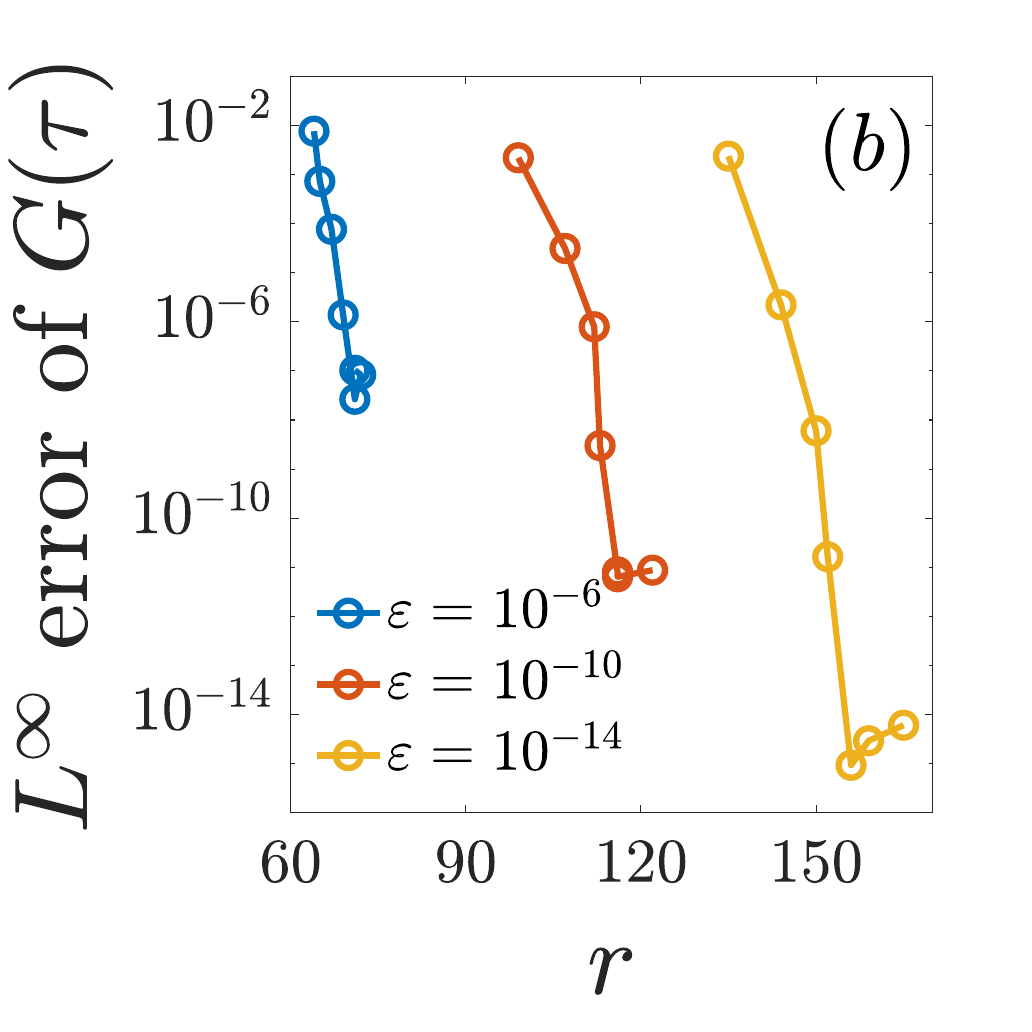}
  \includegraphics[width=.23\textwidth]{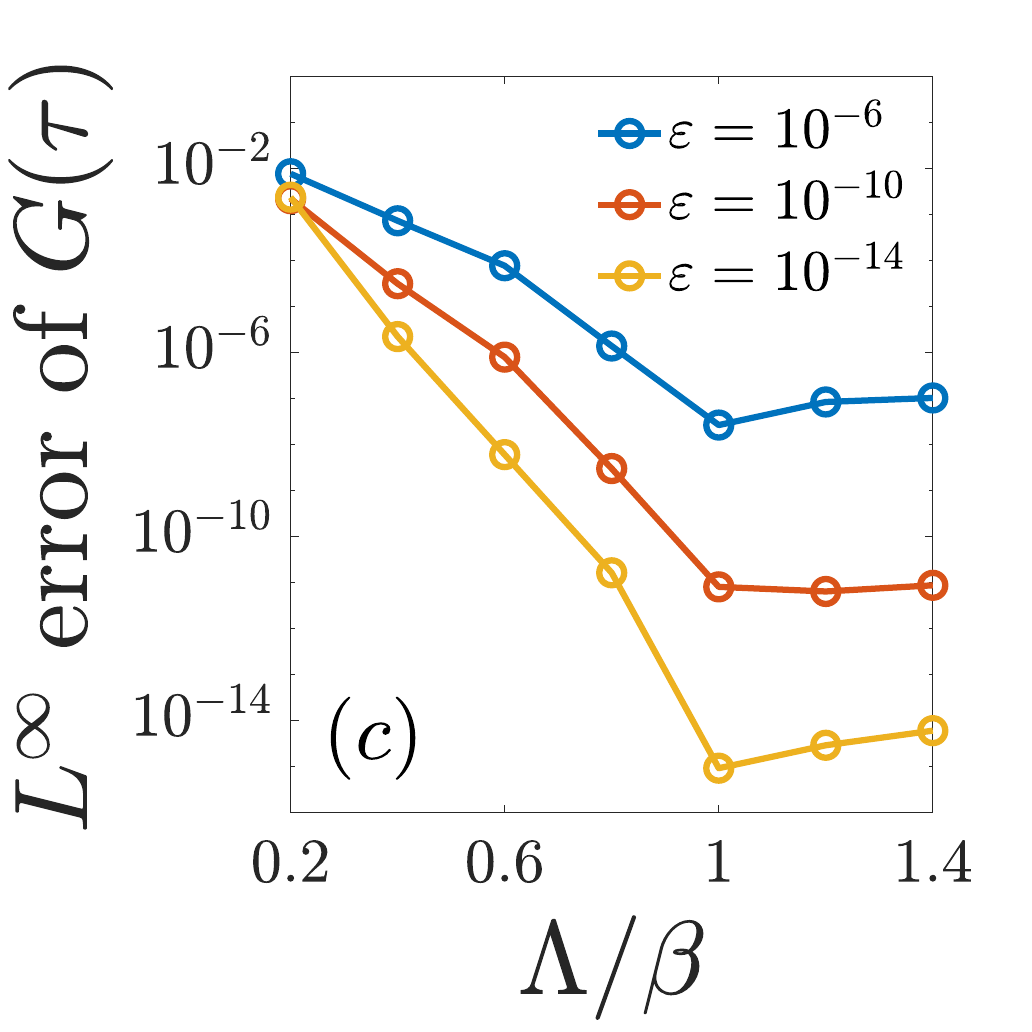}

  \caption{The same as in Figure \ref{fig:err_sc_1e2}, with $\beta
  = 10^6$.}
  \label{fig:err_sc_1e6}

\end{figure*}

We next repeat the experiment using $\rho(\omega) = \paren{\delta(-1/3) +
\delta(1)}/2$ for $\beta = 100$. The Green's function is shown in Figure
\ref{fig:err_ha_1e2}a, and the error versus $r$
in Figure \ref{fig:err_ha_1e2}b. The results are similar to those for
the previous example. We note that the same experiments with $\beta = 10^4$ and
$\beta = 10^6$, and $\Lambda$ adjusted accordingly, give the expected results.

\begin{figure*}[t]
  \centering
  \includegraphics[width=.23\textwidth]{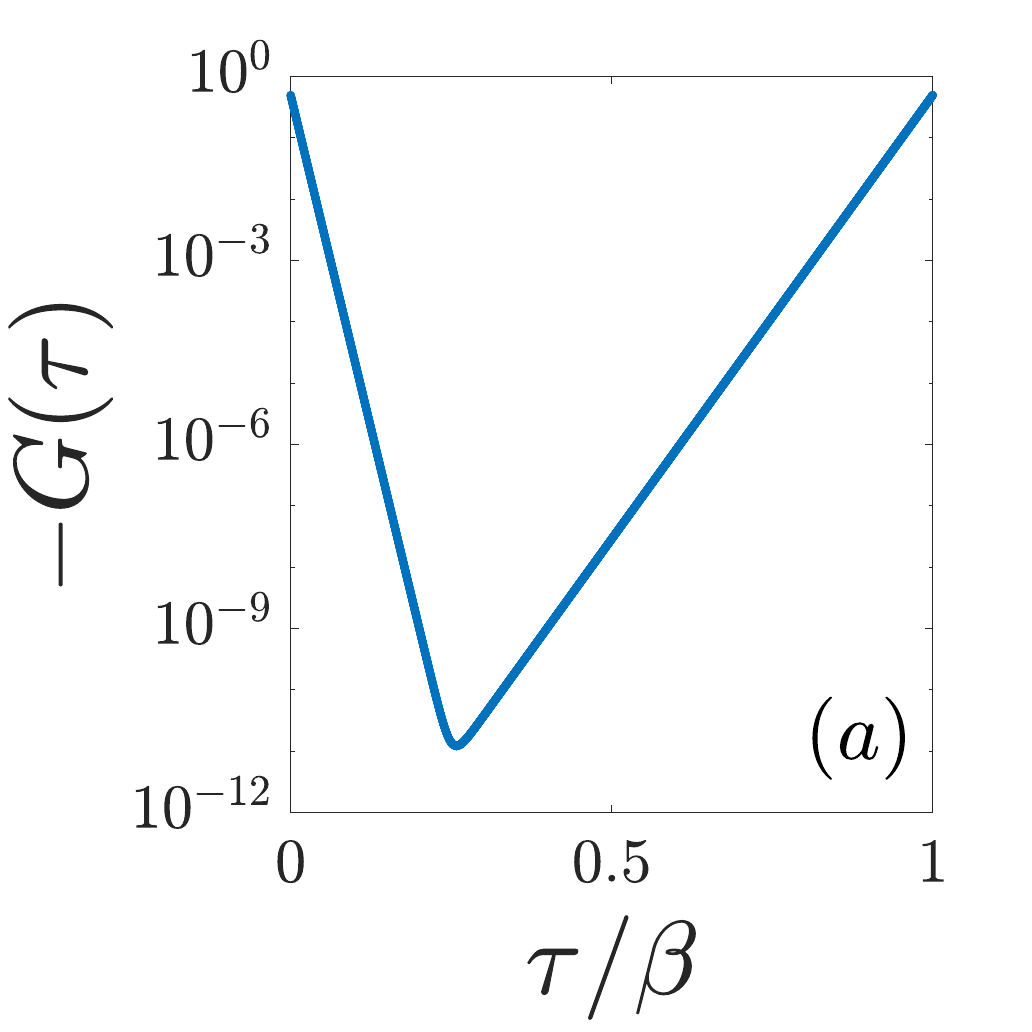}
  \includegraphics[width=.23\textwidth]{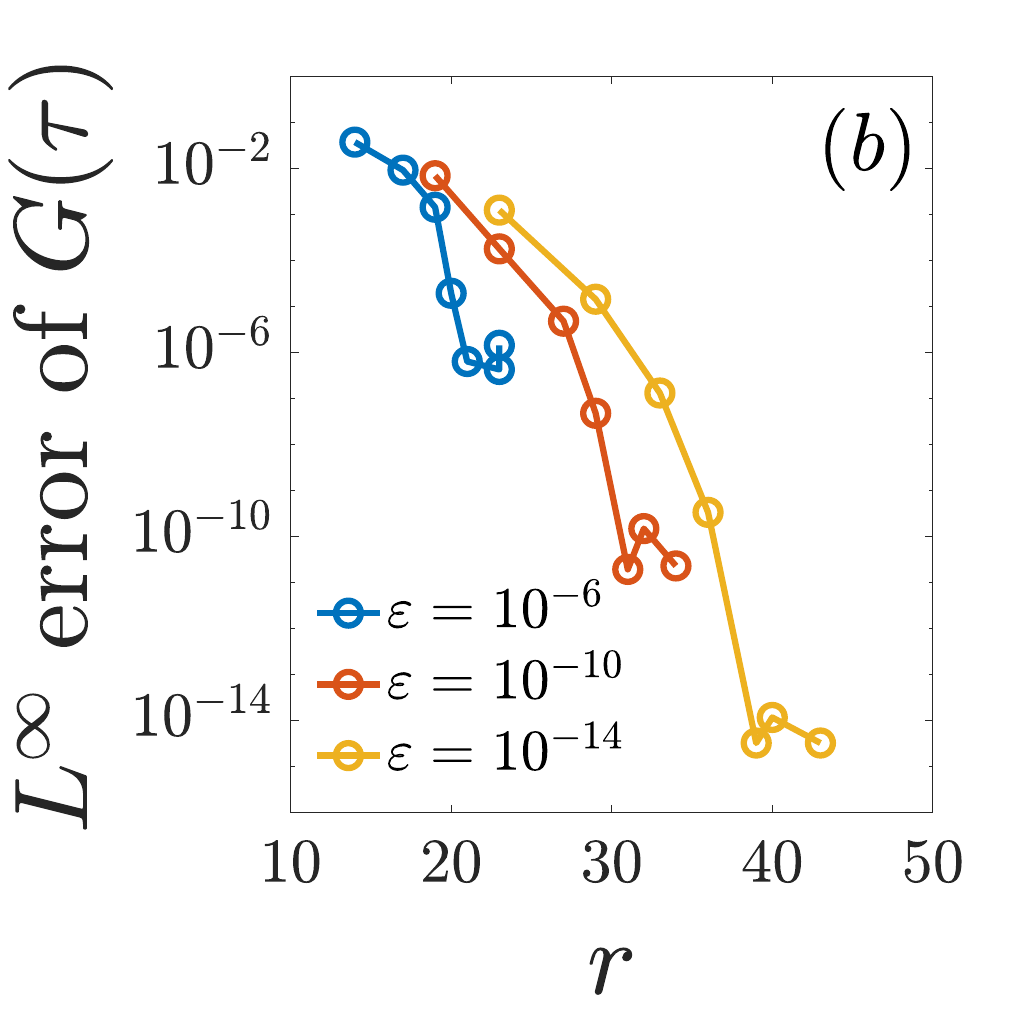}
  \includegraphics[width=.23\textwidth]{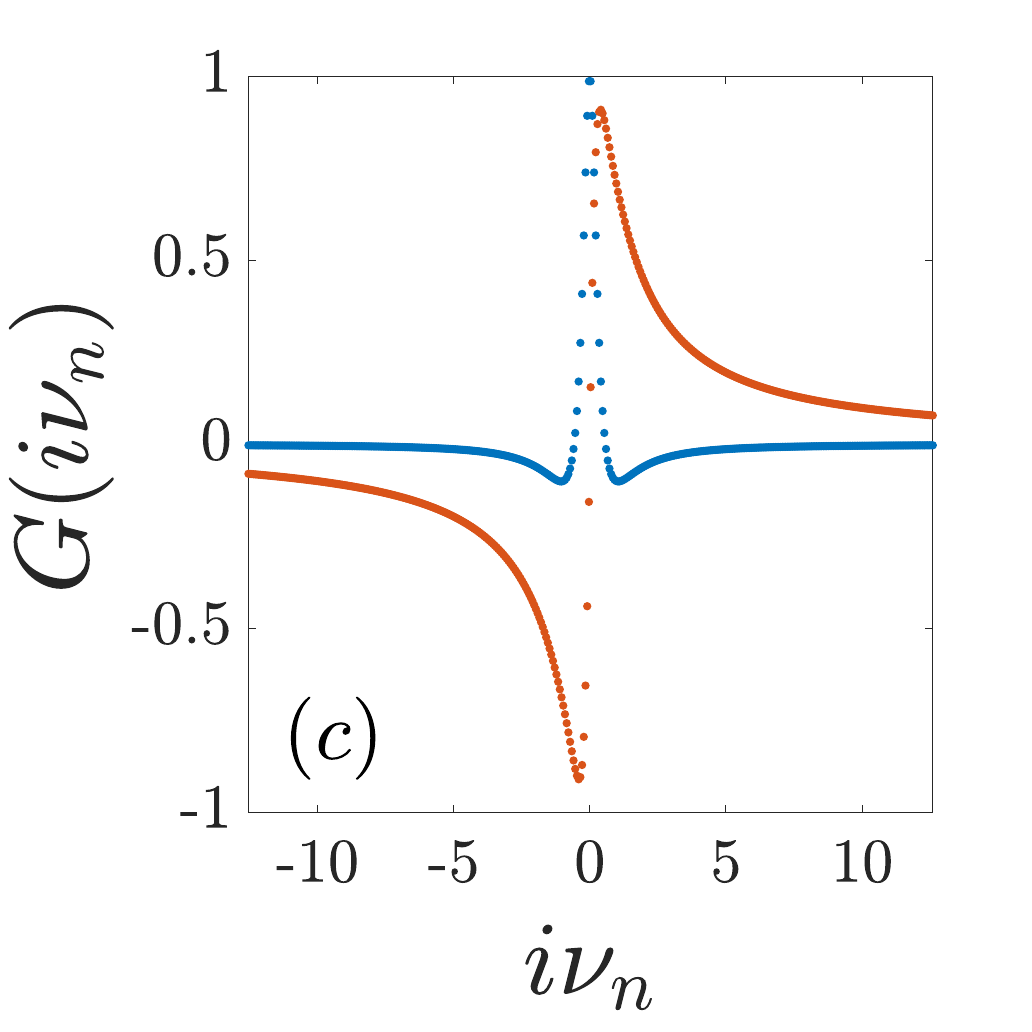}
  \includegraphics[width=.23\textwidth]{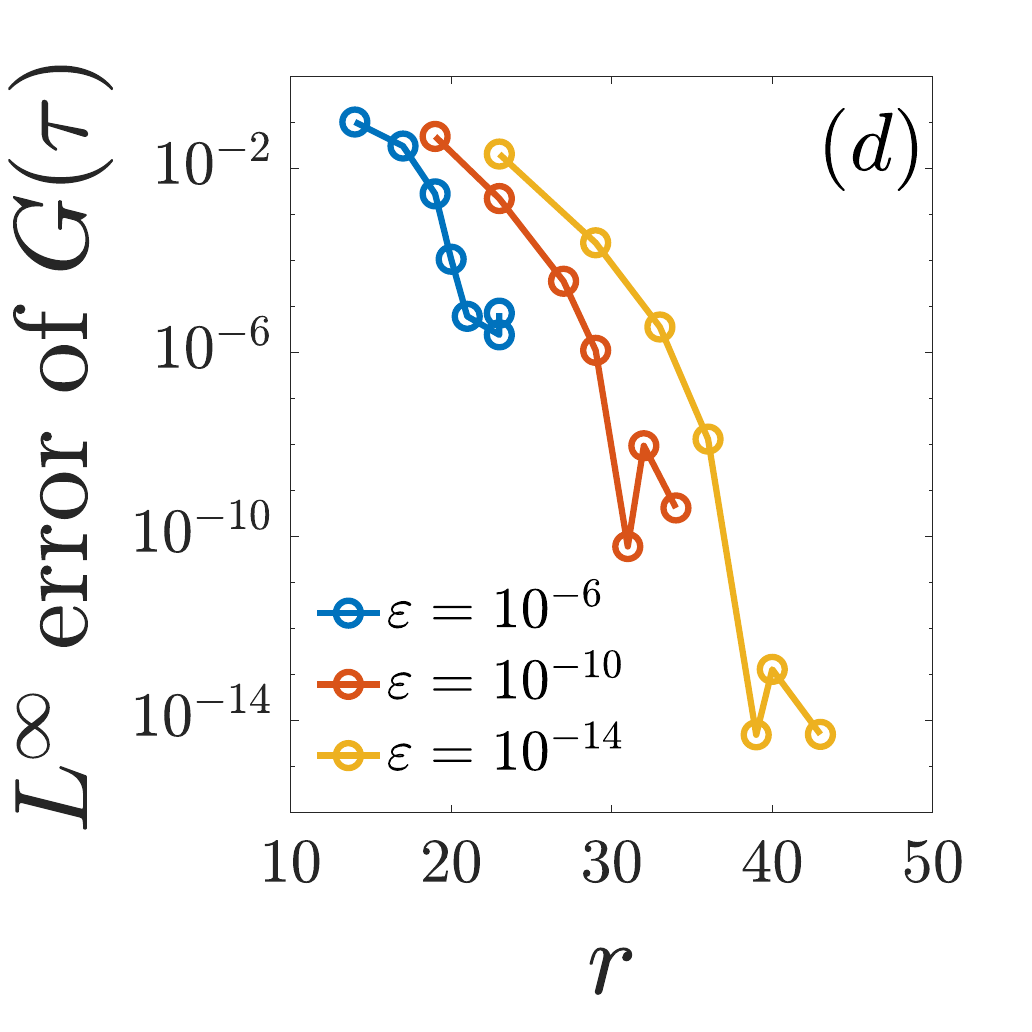}

  \caption{$L^\infty$ error of the DLR approximation of $G(\tau) =
  -\frac{1}{2} \int_{-1}^1 K(\tau,\omega) \paren{\delta(-1/3)+\delta(1)} \, d\omega$
  for $\beta = 100$ and several choices
  of $\epsilon$. (a) $G(\tau)$. (b) Error versus $r$ for imaginary time
  sampling. (c) $G(i\nu_n)$; real part in blue, and imaginary part in
  orange. (d) Error versus $r$ for Matsubara frequency sampling.}
  \label{fig:err_ha_1e2}
\end{figure*}

To test the Matsubara frequency sampling approach, we repeat the same
experiments, except that we recover the DLR
coefficients from samples of the Green's function on the Matsubara
frequency DLR grid. As before, we measure the error in
the imaginary time domain. Results for $\rho(\omega) =
\frac{2}{\pi} \sqrt{1-\omega^2} \theta\paren{1-\omega^2}$ with $\beta = 10^4$ are shown
in Figure \ref{fig:err_sc_mf_1e4}. These can compared with Figure
\ref{fig:semicirc}c. We
observe only a mild loss of accuracy compared with the
results obtained using imaginary time sampling, and we still achieve accuracy near $\epsilon$ when
$\Lambda$ is increased beyond the known cutoff. Results for
$\rho(\omega) = \paren{\delta(-1/3) + \delta(1)}/2$
with $\beta = 100$ are
shown in Figures \ref{fig:err_ha_1e2}c and \ref{fig:err_ha_1e2}d.
We have tested other choices of $\beta$ for both examples, up to $\beta
= 10^6$, with similar results.

\begin{figure*}[t]
  \centering
  \includegraphics[width=.23\textwidth]{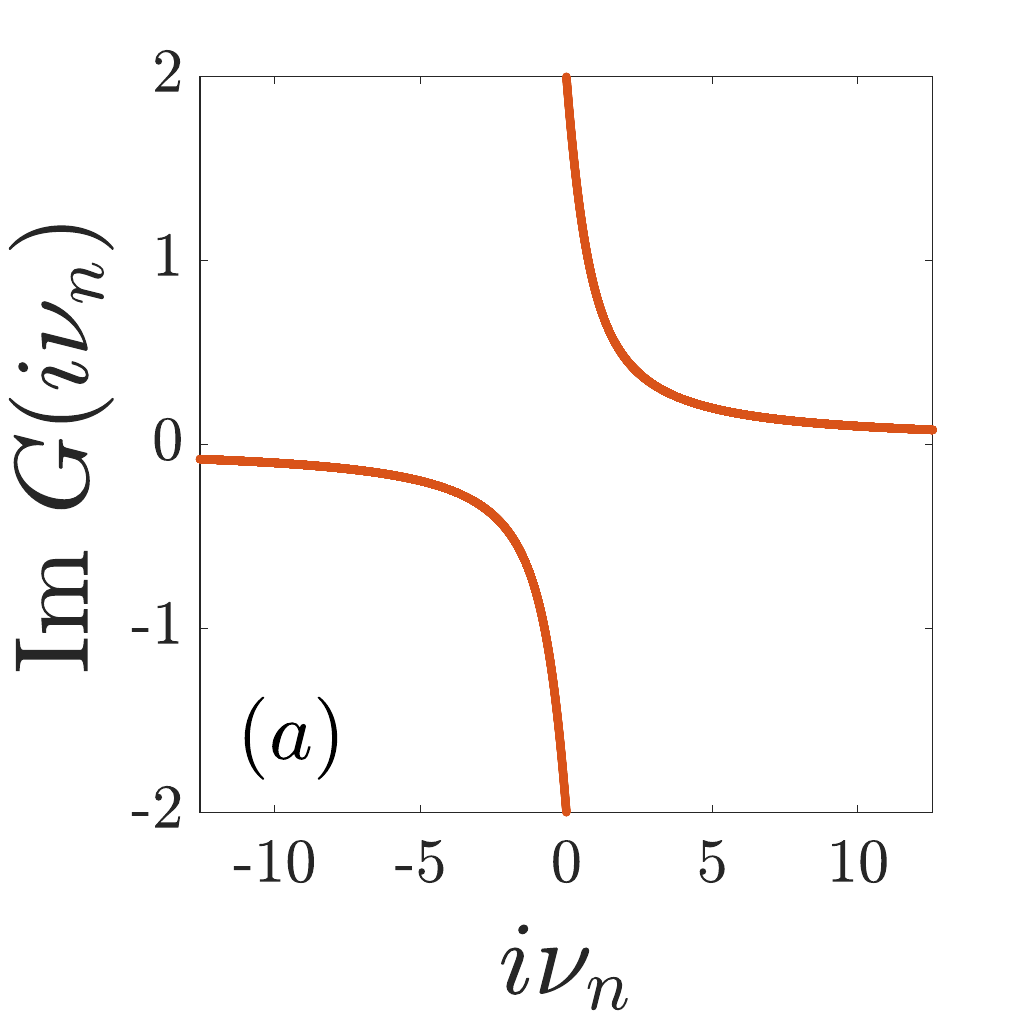}
  \includegraphics[width=.23\textwidth]{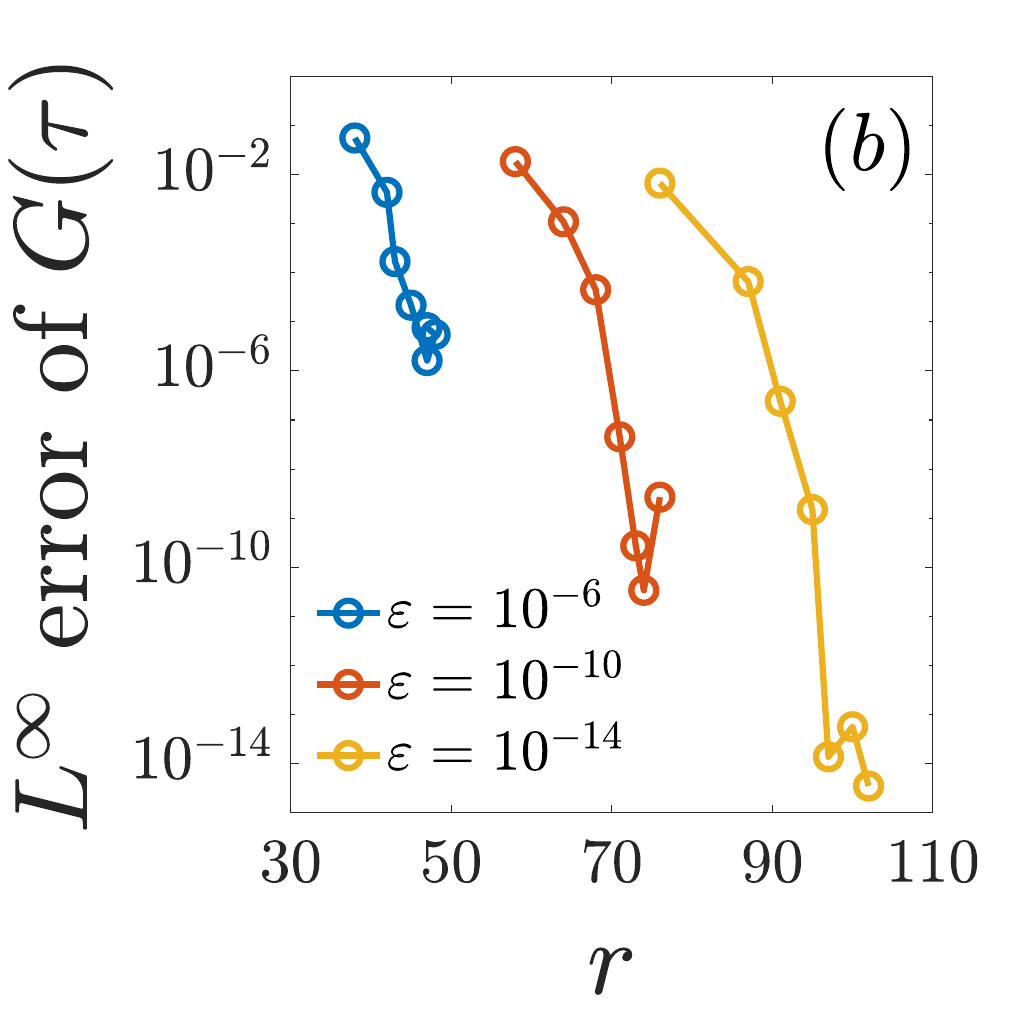}
  \includegraphics[width=.23\textwidth]{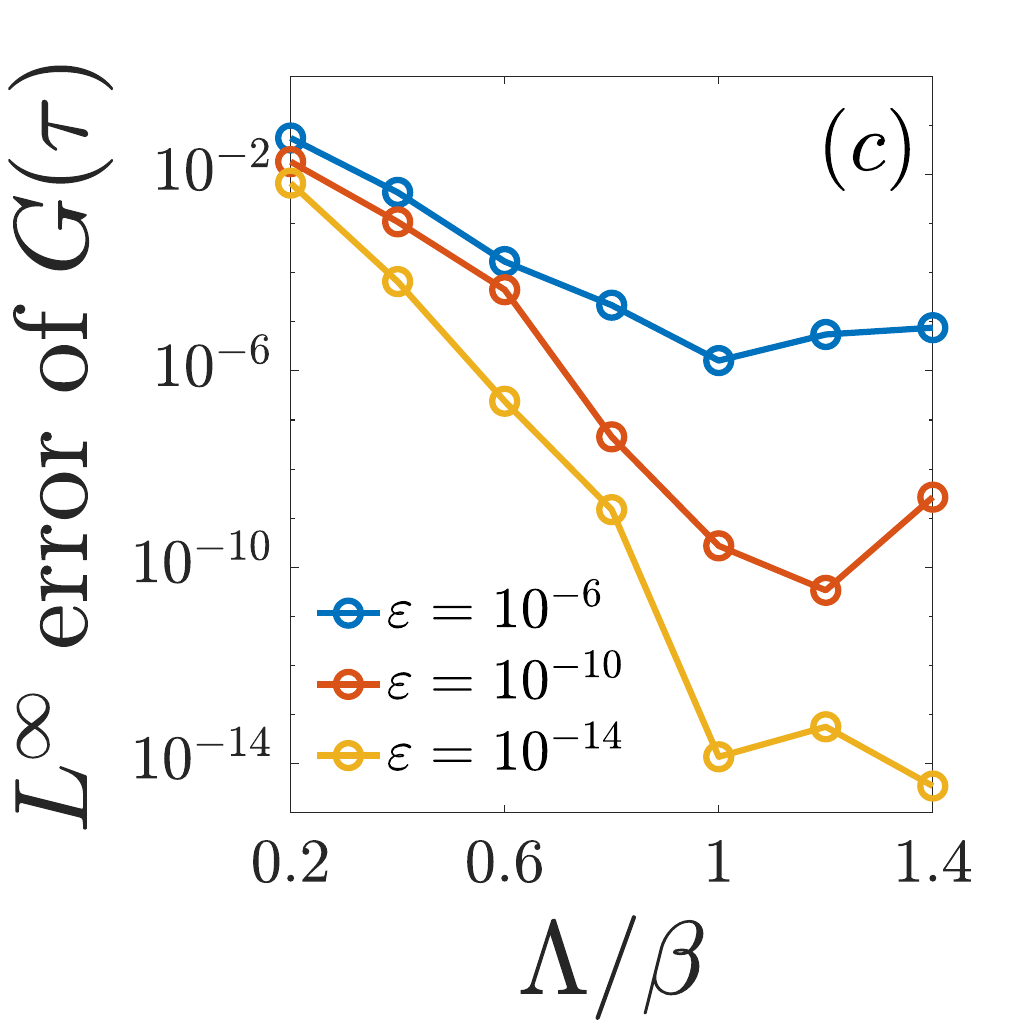}

  \caption{$L^\infty$ error of the DLR approximation of $G(\tau) =
  -\frac{2}{\pi} \int_{-1}^1 K(\tau,\omega) \sqrt{1-\omega^2} \, d\omega$ obtained
  using Matsubara frequency sampling for $\beta = 10^4$ and several choices
  of $\epsilon$. (a) $\Im G(i \nu_n)$; note that $\Re G(i
  \nu_n) = 0$ by symmetry. (b) Error versus $r$. (c) Error versus
  $\Lambda$.}
\label{fig:err_sc_mf_1e4}
\end{figure*}

\section{Intermediate representation} \label{sec:ir}

In this section, we rederive the intermediate representation (IR)
presented in Ref. \onlinecite{shinaoka17} 
using the tools we have introduced to construct the DLR. The IR uses an orthonormal basis
obtained from the SVD of an appropriate discretization of the kernel
$K(\tau,\omega)$.
It represents the same space as DLR, but has the advantage of orthogonality,
at the cost of using more complicated basis functions.
Our presentation of the IR differs from Refs. \onlinecite{shinaoka17, chikano18,chikano19,shinaoka21_2}
in two ways.

First, we show that discretizing $K$ on a composite grid like that
introduced in Section \ref{sec:kdisc} leads to an
efficient construction of the IR basis.
By contrast, in Ref. \onlinecite{chikano18}, an automatic adaptive
algorithm is used.
The authors report in Ref. \onlinecite{chikano19} that this algorithm takes on the order
of hours to build the IR basis for $\Lambda = 10^4$. To
address this problem, the library \texttt{irbasis} contains
precomputed basis functions for several values of $\Lambda$, and codes
to work with them. \cite{chikano19}
While this is a sufficient solution
for many cases, it may be restrictive in others, for example in
converging the IR with respect to $\Lambda$, or selecting $\Lambda$ to
achieve a given accuracy with the smallest possible number of basis
functions. 
Our approach, presented in Section \ref{sec:irbasis}, does not require
an expensive automatic adaptive algorithm. The IR basis is obtained by discretizing
$K$ on a well-chosen grid, as before, and computing a single SVD of a matrix whose dimension grows logarithmically with $\Lambda$, and
for $\Lambda$ up to $10^6$ is less than $1000$.
As an illustration, Figure \ref{fig:iranddlr} contains plots of a few IR and DLR basis functions for
$\Lambda = 10^4$ and $\epsilon = 10^{-14}$. Building each basis takes
less than a second, despite the high resolution required.

Second, we show in Section \ref{sec:irgrid} that the interpolative
decomposition of a matrix containing the $r$ IR basis functions naturally 
yields a set of $r$ sampling nodes for the IR, analogous to the
interpolation grid for the DLR, and a transformation from
values of a Green's function at these nodes to its IR coefficients.
In previous works, the sparse sampling method was used to provide
such a sampling grid for the IR. \cite{li20}
The sparse sampling nodes are chosen based on a heuristic, which is
motivated by the relationship between orthogonal polynomials and their
associated interpolation grids. While this heuristic appears to lead to a
numerically stable algorithm, the procedure we have used to construct the DLR
and Matsubara frequency grids is automatic and offers robust accuracy
guarantees. 

\begin{figure}[t]
  \centering
    \includegraphics[width=.23\textwidth]{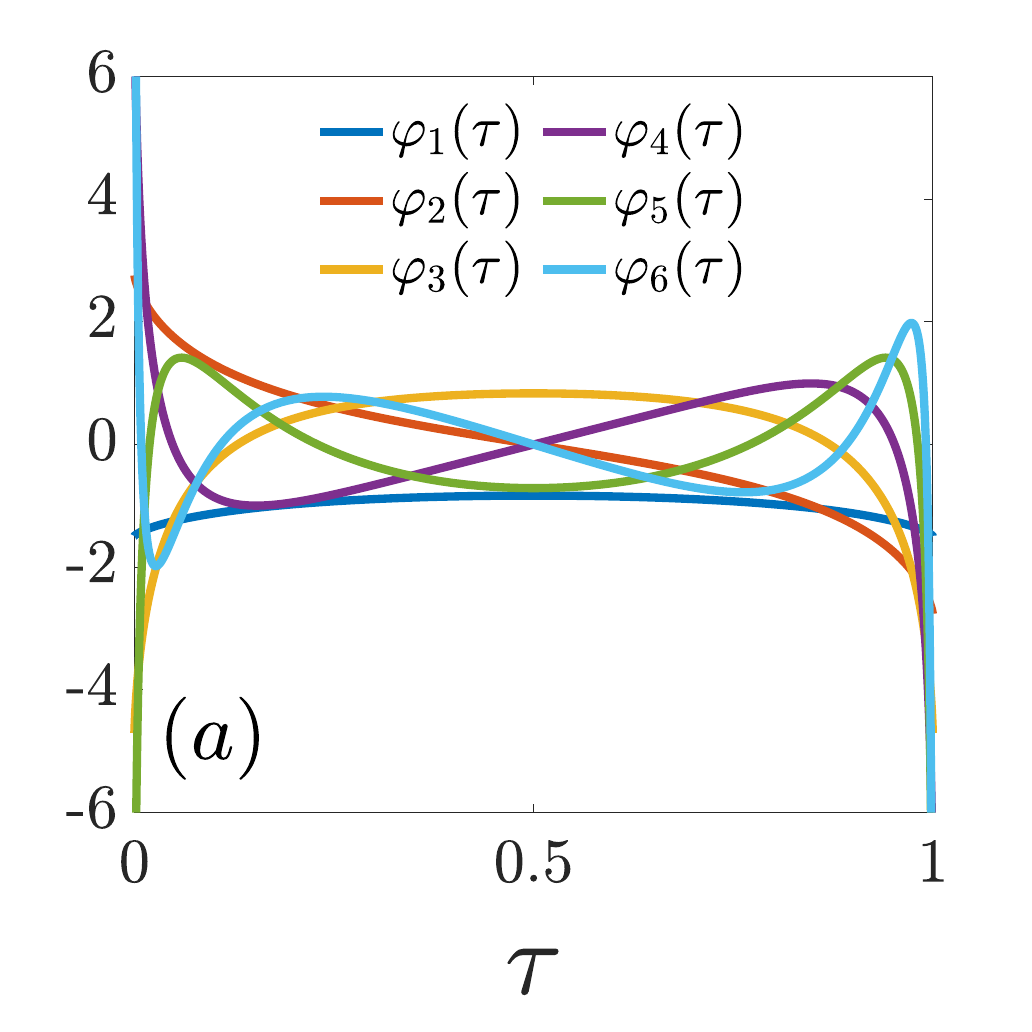}
    \includegraphics[width=.23\textwidth]{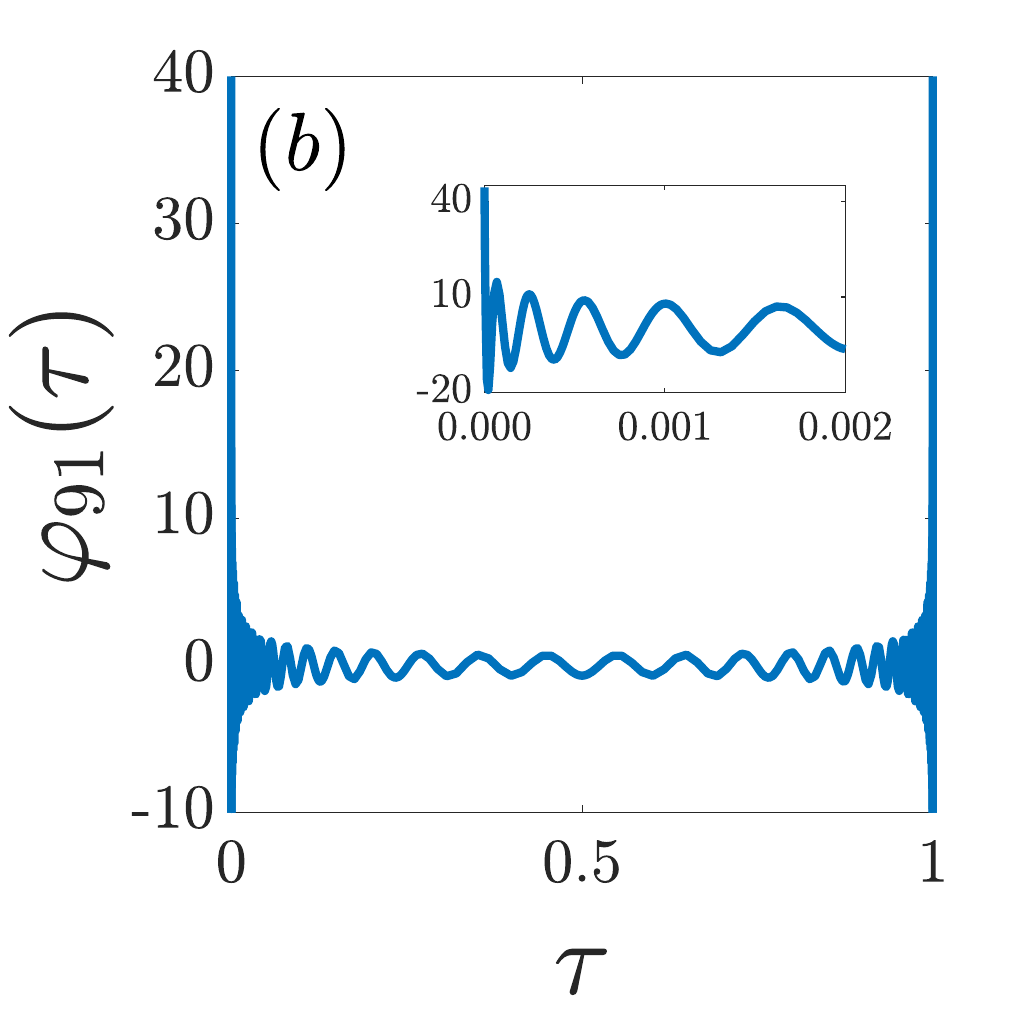}
    \includegraphics[width=.23\textwidth]{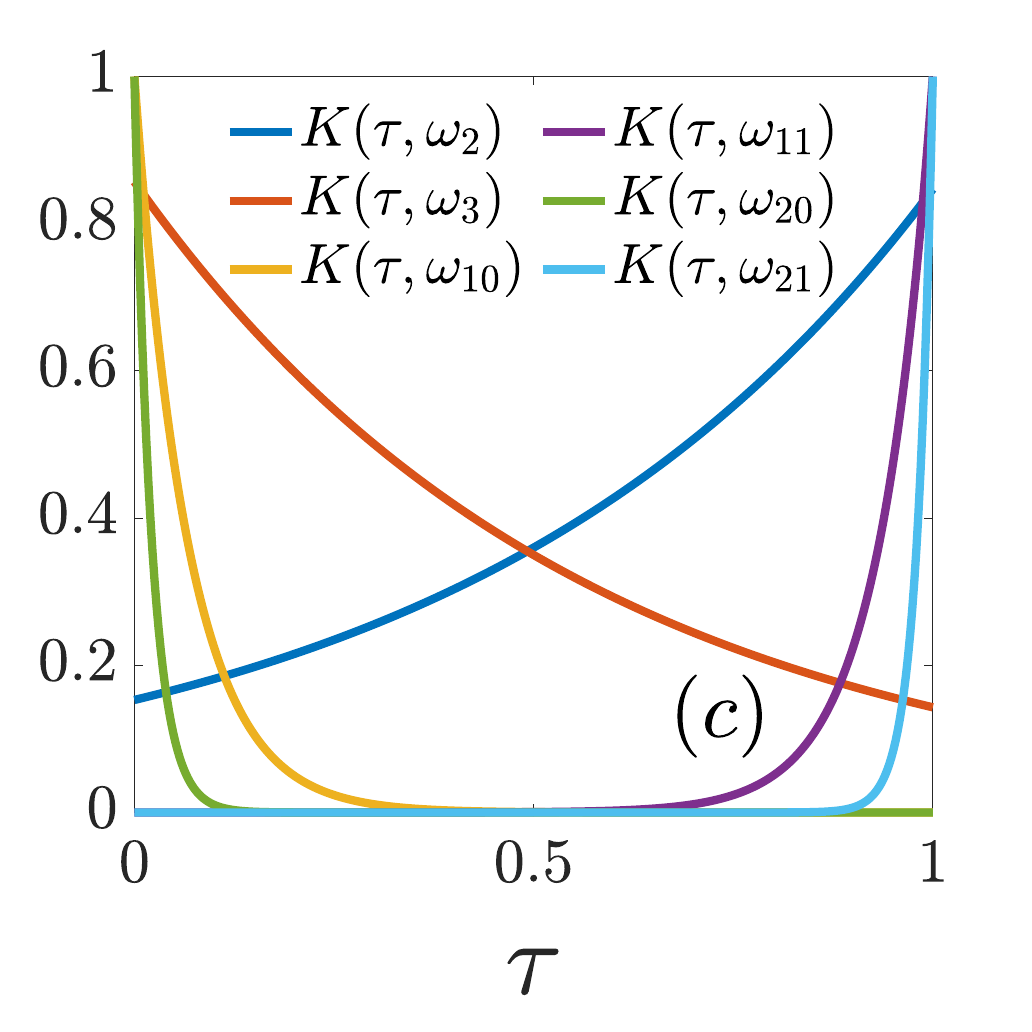}
    \includegraphics[width=.23\textwidth]{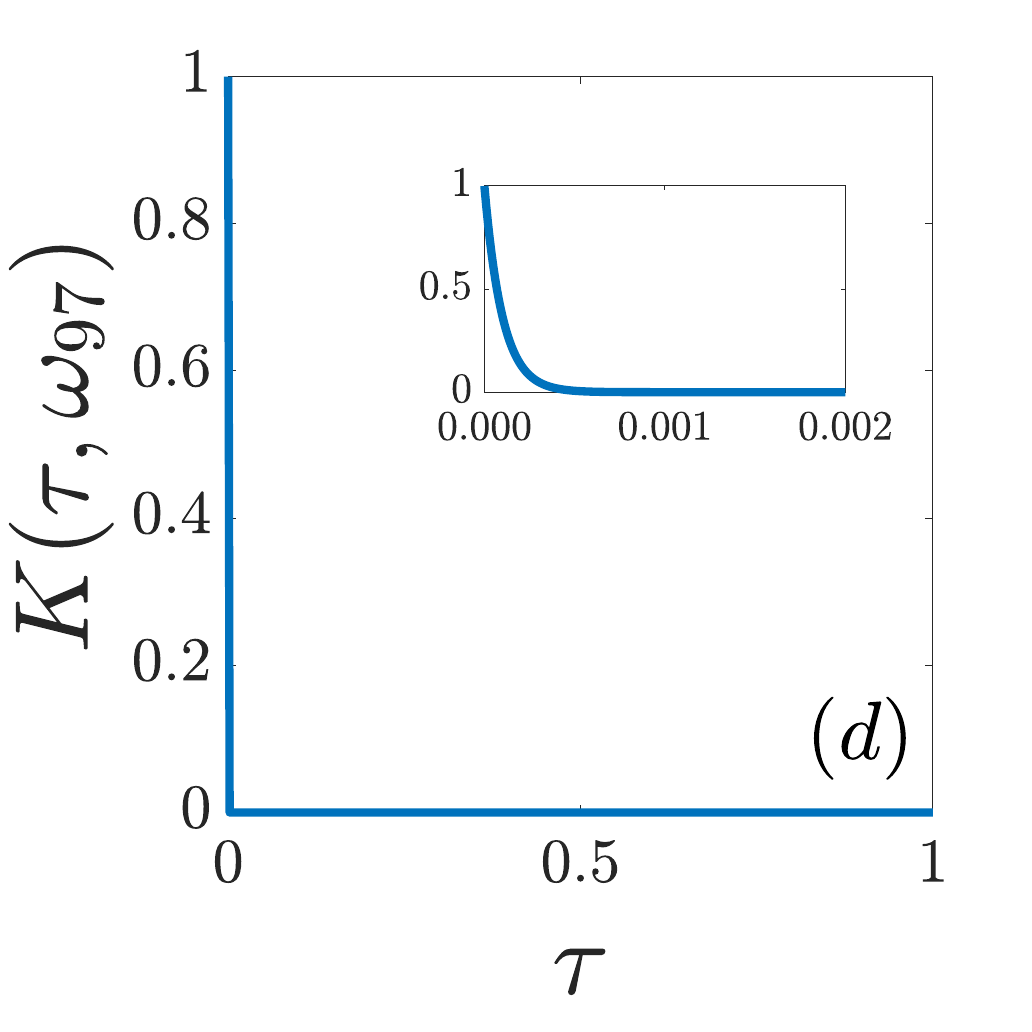}

  \caption{IR and DLR basis functions for $\Lambda = 10^4$ and
  $\epsilon = 10^{-14}$. (a) The first five IR basis functions. (b)
  The highest-degree (91st) IR basis function for the given parameters. (c) Several DLR
  basis functions for smaller $\omega_l$; we have ordered $\omega_l$ so
  that increasing $l$ corresponds to larger exponential rates. (d) The DLR
  basis function (96th) with the largest exponential decay rate for the given
  parameters.}
\label{fig:iranddlr}
\end{figure}

\subsection{The IR basis} \label{sec:irbasis}

The first step in constructing the IR basis is again to finely discretize
$K(\tau,\omega)$. Here, to ensure that we obtain a basis which is
orthogonal in the $L^2$ inner product, we use composite Legendre grids rather than
composite Chebyshev grids. The discussion in Section
\ref{sec:kdisc} holds equally well for composite Legendre grids, with
Gauss-Legendre nodes used in place of Chebyshev nodes.

In particular, let $\{\tau_i^f\}_{i=1}^M$ and $\{\omega_j^f\}_{j=1}^N$ be the nodes of the
composite Legendre fine grids in $\tau$ and $\omega$, respectively, and
let $A \in \RR^{M \times N}$ be the matrix with entries $A_{ij} =
K(\tau_i^f,\omega_j^f)$. Let $W \in \RR^{M \times M}$ be a diagonal
matrix with entries $W_{ii} = w_i^f$, the quadrature weights associated with the
composite Legendre grid points $\tau_i^f$. The quadrature weights
$\{w_i^f\}_{i=1}^M$ are obtained from the ordinary Gauss-Legendre quadrature
weights at $p$ Legendre nodes, rescaled to account for the
panel length.

Consider the SVD $\sqrt{W}A = U \Sigma V^T$ of
the reweighted matrix. Truncating the SVD at rank $r$ gives 
\[\sqrt{w_i^f} K(\tau_i^f,\omega_j^f) = \sum_{l=1}^r \sigma_l (u_l)_i
(v_l)_j + \errmat_{ij}\]
where $\sigma_l$, $\{u_l\}_{l=1}^r$, and $\{v_l\}_{l=1}^r$ are the first
$r$ singular values, left singular vectors, and right singular vectors,
respectively, and $\errmat$ is an error matrix. As before, we choose $r$ so that
$\norm{\errmat}_2 < \epsilon$, implying $r$ is the $\epsilon$-rank
of $\sqrt{W}A$.

Note that the entries of each left singular vector $u_l$ can be interpreted as
samples of a function on the fine grid in $\tau$, and similarly, the
entries of $v_l$ as samples of a function on the fine grid in $\omega$.
Summing against the corresponding truncated Lagrange
polynomials, we find
\begin{multline*}
K(\tau,\omega) = \sum_{l=1}^r \sigma_l \paren{\sum_{i=1}^M
  \wb{\ell}_i(\tau) \frac{(u_l)_i}{\sqrt{w_i^f}}} \paren{\sum_{j=1}^N
  \wb{\ell}_j(\omega) (v_l)_j} \\ + \errmat(\tau,\omega),
\end{multline*}
with $\errmat(\tau,\omega) = \sum_{i=1}^M \sum_{j=1}^N
\frac{\wb{\ell}_i(\tau)}{\sqrt{w_i^f}} \errmat_{ij}
\wb{\ell}_j(\omega)$.
Inserting this into the truncated Lehmann representation \eqref{eq:tlehmann}, we
obtain
\begin{multline*}
G(\tau) = -\sum_{l=1}^r \sigma_l \paren{\sum_{i=1}^M
  \wb{\ell}_i(\tau) \frac{(u_l)_i}{\sqrt{w_i^f}}} \\ \times
  \int_{-\Lambda}^\Lambda \paren{\sum_{j=1}^N
  \wb{\ell}_j(\omega) (v_l)_j} \rho(\omega) \, d\omega \\ -
  \int_{-\Lambda}^\Lambda \errmat(\tau,\omega) \rho(\omega) \, d\omega.
\end{multline*}
This establishes the validity of the representation
\[G(\tau) = \sum_{l=1}^r \wh{g}_l \varphi_l(\tau) + \errfun(\tau)\]
for
\[\varphi_l(\tau) = \sum_{i=1}^M \wb{\ell}_i(\tau)
\frac{(u_l)_i}{\sqrt{w_i^f}},\]
and $\errfun(\tau)$ an error term, 
analogous to the result in Theorem \ref{thm:dlr}. We do not give an
explicit bound on $\errfun(\tau)$ here, but evidently it is similar to
that for the DLR case.

The orthonormality of the collection
$\{\varphi_l\}_{l=1}^r$ follows from that of the left singular vectors
$\{u_l\}_{l=1}^r$:
\begin{align} \label{eq:phiorth}
  \begin{split}
    \int_0^1 \varphi_k(\tau) \varphi_l(\tau) \, d\tau &= \sum_{i=1}^M
\varphi_k(\tau_i^f) \varphi_l(\tau_i^f) w_i^f \\
    &= \sum_{i=1}^M (u_k)_i (u_l)_i = \delta_{kl}.
  \end{split}
\end{align}
Here, the first equality holds because the functions
$\{\varphi_l\}_{l=1}^r$ are piecewise polynomials of degree $p-1$, so the
Gauss-Legendre quadrature rule is exact, and the second follows from the
definition of $\varphi_l$ and the truncated Lagrange
polynomials. We define the IR basis as
$\{\varphi_l\}_{l=1}^r$.

The functions $\varphi_l$ are represented using the singular
vectors $\{u_l\}_{l=1}^r$ of $\sqrt{W}A$, so
constructing them only requires forming and computing the SVD of this $M
\times N$ matrix, with $M, N = \OO{\log \Lambda}$, truncated to include
only singular values larger than some desired accuracy $\epsilon$.

Operations involving the IR basis functions are straightforwardly
carried out by working with the piecewise polynomial representation. For
example, to evaluate $\varphi_l$ at a point $\tau$, we first find the
subinterval in the composite Legendre grid
containing $\tau$, and then evaluate a Legendre expansion on that
subinterval at $\tau$.
It follows from the orthonormality of the IR basis, and the
exactness of Gauss-Legendre quadrature on polynomials of degree $2p-1$, that the
IR coefficients of a Green's function
\begin{equation} \label{eq:gir}
  G(\tau) = \sum_{l=1}^r \wh{g}_l \varphi_l(\tau)
\end{equation}
are given by
\begin{align} \label{eq:ircoefs}
  \begin{split}
    \wh{g}_l &= \int_0^1 \varphi_l(\tau) G(\tau) \, d\tau \\ 
    &= \sum_{i=1}^M \varphi_l(\tau_i^f) G(\tau_i^f) w_i^f = \sum_{i=1}^M (u_l)_i \,
  G(\tau_i^f) \, \sqrt{w_i^f}.
  \end{split}
\end{align}

\subsection{The imaginary time IR grid and transform matrix} \label{sec:irgrid}

Computing the IR coefficients using \eqref{eq:ircoefs} requires sampling
$G(\tau)$ at $M \gg r$ grid points. As for the imaginary time DLR grid, we show how to obtain $r$
\emph{imaginary time IR grid points} $\{\tau_i\}_{i=1}^r$ and an $r \times r$ transform matrix $T$ so that given a Green's function \eqref{eq:gir},
we have $\wh{g}_l \approx \sum_{k=1}^r T_{lk} \, G(\tau_k)$ to high
accuracy. We note that
since the IR basis is orthogonal, it is natural to use projection rather
than interpolation to obtain the expansion coefficients, so the
procedure here is different than that for the DLR basis.

Let $\Phi$ be the matrix containing the IR basis functions on the fine
grid, $\Phi_{ij} = \varphi_j(\tau_i^f) =
(u_j)_i/\sqrt{w_i^f}$.
The ID of $\Phi^T$ gives
\[\Phi = R \phi\]
with $\phi \in \RR^{r \times r}$ consisting of selected rows of $\Phi$, and
$R \in \RR^{M \times r}$ the projection matrix. We take
$\{\tau_k\}_{k=1}^r$ to be the subset of the fine grid points
$\{\tau_i^f\}_{i=1}^M$ corresponding to the selected rows of $\Phi$, and
define an $r \times r$ matrix
\begin{equation} \label{eq:idmat}
  T = \Phi^T W R.
\end{equation}

Suppose $G$ is given by \eqref{eq:gir}, and let $g, \wh{g} \in \RR^r$
with $g_k = G(\tau_k)$. In particular, we have $\phi \wh{g} = g$. Then
\[Tg = \Phi^T W R g = \Phi^T W R \phi \wh{g} = \Phi^T W \Phi \wh{g} =
\wh{g}\]
since $\Phi^T W \Phi = I$ from \eqref{eq:phiorth}.
Thus, the imaginary time IR grid points and transform matrix can be computed directly
from the ID of $\Phi$, and can be used to recover the IR coefficients
from the values of a Green's function on the IR grid.

We note that since the IR basis is orthogonal, issues of stability are more
straightforward than in the DLR case, and we do not give a detailed
analysis here.

\subsection{IR in the Matsubara frequency domain}

One can construct a Matsubara frequency grid for the IR basis using
similar techniques to those presented in Section \ref{sec:matpts}. In this case, however, we do not have
simple analytical expressions for the Fourier transforms of the IR basis
functions, and these have to be computed by numerical integration
using the piecewise polynomial
representations. This process is cumbersome compared with the
analogous method for the DLR basis, and we will not describe it in detail.

As an alternative, to recover the IR coefficients from samples of a
Green's function in the Matsubara frequency domain, one could
simply evaluate the Green's function on the Matsubara frequency DLR
grid, recover the DLR coefficients, evaluate the resulting DLR expansion
on the IR grid, and apply the transform $T$.

\section{Dyson equation in the DLR basis} \label{sec:dyson}

We consider the Dyson equation relating a Matsubara Green's function
and self-energy,
\begin{equation} \label{eq:dysonmat}
   G^{-1}(i\nu_n) = G^{-1}_0(i\nu_n) - \Sigma(i\nu_n), 
\end{equation}
where $G_0$ is a given Matsubara Green's function.
Although it is diagonal in the Matsubara frequency domain, it can also
be written in the time domain as an integral equation,
\begin{equation} \label{eq:dyson}
    G(\tau) - \int_0^\beta d\tau' G_0(\tau-\tau') \int_0^\beta d\tau'' 
    \Sigma(\tau'-\tau'') G(\tau'') = G_0(\tau).
\end{equation}
The functions $G$, $G_0$, and $\Sigma$ can be extended to $(-\beta,0)$ using
the $\beta$-antiperiodicity property $f(-\tau) = -f(\beta-\tau)$ or the
$\beta$-periodicity property
$f(-\tau) = f(\beta-\tau)$ for fermionic and bosonic Green's functions,
respectively. Since $G(\tau)$ is an imaginary
time Green's function, it has a Lehmann spectral representation
\eqref{eq:lehmann}, and can therefore be approximated by a DLR. We
assume the same is true of the self-energy $\Sigma$, and of the
intermediate convolutions in \eqref{eq:dyson}; this can be shown in many
typical cases of physical interest. For simplicity, we assume in this section that all
quantities are fermionic, but our discussion is straightforwardly
extended to the bosonic case.

Since $\Sigma$ in general depends on $G$, the Dyson
equation must be solved self-consistently by nonlinear iteration: see
for example \eqref{eq:syk} in the next section for the SYK self-energy. The
standard method is to compute $\Sigma$ in the imaginary time
domain, where it is typically simpler, and to solve the Dyson equation
\eqref{eq:dysonmat} in the Matsubara frequency domain where it is
diagonal. This
procedure can be carried out efficiently using the DLR: (i) given
$G$ on the imaginary time DLR grid computed from a previous iterate,
$\Sigma$ is computed on the imaginary time DLR grid; (ii) the DLR
coefficients of $\Sigma$ are recovered; (iii) $\Sigma$ is evaluated
on the Matsubara frequency grid; (iv) \eqref{eq:dysonmat} is solved to
obtain $G$ on the Matsubara frequency grid; (v) the DLR coefficients of
$G$ are recovered; and (vi) $G$ is evaluated on the imaginary time DLR
grid to prepare for the next iterate.  Ref. \onlinecite{li20} describes
and demonstrates a similar procedure using the sparse sampling method
for the IR.

In this section, we show how to solve the Dyson equation directly in
imaginary time using the DLR basis. We note that much of the discussion
holds equally well for the IR basis -- or any other basis, including an
orthogonal polynomial basis \cite{GullStrand_2020} -- however, certain
quantities which must be computed by numerical integration in that case
are given analytically for the DLR basis. We will work with the integral
form \eqref{eq:dyson}, and assume $\Sigma$ is given, as is the case
within a single step of nonlinear iteration.

Let $G$ be a Green's function given by a DLR
\[G(\tau) = \sum_{l=1}^r K(\tau,\omega_l) \wh{g}_l\]
and let $g_k = G(\tau_k)$.
We will use similar notation for other quantities.
We define the convolution between $\Sigma$ and $G$ by
\begin{equation} \label{eq:fconv}
  F(\tau) \equiv \int_0^1 \Sigma(\tau-\tau') G(\tau') \, d\tau'.
\end{equation}
Let $\wb{\Sigma} \in \RR^{r \times r}$ denote the matrix discretizing this convolution, so that
\begin{equation} \label{eq:sigconv}
  f = \wb{\Sigma} g
\end{equation}
with $f_k = F(\tau_k)$. $\wb{\Sigma}$ can be constructed by a linear
transformation of the values $\sigma_k = \Sigma(\tau_k)$; there is a
tensor $\mathcal{C}_{ijk}$ with
\begin{equation} \label{eq:getsig1}
  \wb{\Sigma}_{ij} = \sum_{k=1}^r \mathcal{C}_{ijk} \sigma_k.
\end{equation}
As we will see, it may be simpler to form $\wb{\Sigma}$ from its DLR
coefficients $\wh{\sigma}_l$, and there is a tensor
$\wh{\mathcal{C}}_{ijl}$ with
\begin{equation} \label{eq:getsig2}
  \wb{\Sigma}_{ij} = \sum_{l=1}^r \wh{\mathcal{C}}_{ijl} \wh{\sigma}_l.
\end{equation}

Using this notation, the discretization of \eqref{eq:dyson} in the DLR
basis is given by
\begin{equation} \label{eq:dysonDLR}
  (I - \wb{G}_0 \wb{\Sigma}) g = g_0,
\end{equation}
where $\wb{G}_0$ can be obtained as in \eqref{eq:getsig1} or
\eqref{eq:getsig2}. This is simply an $r \times r$ linear system. Thus,
given $\Sigma$, $\wb{\Sigma}$ can be obtained using \eqref{eq:getsig1}
or \eqref{eq:getsig2}, and then \eqref{eq:dysonDLR} can be solved to obtain
$G$ on the imaginary time DLR grid.
It remains only to discuss the construction of the tensors $\mathcal{C}$ and $\wh{\mathcal{C}}$.

We begin by discretizing the convolution \eqref{eq:fconv} on the
imaginary time DLR grid:
\begin{align*}
  f_k &= F(\tau_k) = \int_0^1 \Sigma(\tau_k-\tau') G(\tau') d\tau' \\
  &= \sum_{l=1}^r \paren{\int_0^1 \Sigma(\tau_k-\tau')
  K(\tau',\omega_l) \, d\tau'} \wh{g}_l \equiv \sum_{l=1}^r
  \wh{\Sigma}_{kl} \wh{g}_l.
\end{align*}
Here we have defined $\wh{\Sigma}$ as
the matrix of convolution by $\Sigma$, which takes the DLR coefficients
$\wh{g}_l$ to the values $f_l$ of the convolution at the imaginary time
DLR grid points. Recall the matrix $\kmat$ defined by
\eqref{eq:kmatdef}, which gives $g = \kmat \wh{g}$.
Precomposing $\wh{\Sigma}$ with $\kmat^{-1}$, we obtain the matrix
\[\wb{\Sigma} = \wh{\Sigma} \kmat^{-1}\]
yielding \eqref{eq:sigconv}.
We can define the matrix $\wb{G}_0$ of
convolution by $G_0$ similarly.

To construct $\wh{\Sigma}$, we take $\Sigma(\tau) = \sum_{k=1}^r
K(\tau,\omega_j) \wh{\sigma}_k$ and write
\begin{equation} \label{eq:formS}
  \begin{aligned}
  \wh{\Sigma}_{ij} &= \int_0^1 \Sigma(\tau_i-\tau')
    K(\tau',\omega_j) \, d\tau' \\
  &= \begin{multlined}[t] \int_0^{\tau_i} \Sigma(\tau_i-\tau') K(\tau',\omega_j) \, d\tau' \\ -
    \int_{\tau_i}^1 \Sigma(1+\tau_i-\tau') K(\tau',\omega_j) \, d\tau'
  \end{multlined} \\
    &= \begin{multlined}[t] \sum_{k=1}^r \wh{\sigma}_k \left(\int_0^{\tau_i}
    K(\tau_i-\tau',\omega_k) K(\tau',\omega_j) \,  d\tau'\right. \\
    \left. - \int_{\tau_i}^1 K(1+\tau_i-\tau',\omega_k)
    K(\tau',\omega_j) \, d\tau'\right)
    \end{multlined} \\
    &= \sum_{k=1}^r
      \wt{\mathcal{C}}_{ijk} \wh{\sigma}_k,
  \end{aligned}
\end{equation}
where we have used the antiperiodicity property.
A straightforward calculation shows that $\wt{\mathcal{C}}_{ijk}$ is given explicitly by 
\[\wt{\mathcal{C}}_{ijk} =
\begin{cases}
  \frac{K(\tau_i,\omega_j) - K(\tau_i,\omega_k)}{\omega_k-\omega_j}
  &\text{if } j \neq k \\
  \paren{\tau_i-K(1,\omega_j)} K(\tau_i,\omega_j) &\text{if } j = k.
\end{cases}
\]

The matrix $\wb{\Sigma}$
is then given by
\[\wb{\Sigma}_{ij} = \sum_{k=1}^r \wh{\Sigma}_{ik} \kmat^{-1}_{kj} =
\sum_{k,l=1}^r \wt{\mathcal{C}}_{ikl} \wh{\sigma}_l \kmat^{-1}_{kj}.\]
Defining
\begin{equation} \label{eq:Cdef}
  \wh{\mathcal{C}}_{ijl} \equiv \sum_{k=1}^r \wt{\mathcal{C}}_{ikl}
  \kmat^{-1}_{kj}
\end{equation}
gives \eqref{eq:getsig2}.
We remark that in practice $\kmat^{-1}$ should be applied in a numerically stable
manner, such as by LU factorization and back substitution, rather than
formed explicitly.

Inserting $\wh{\sigma} = \kmat^{-1}
\sigma$ into \eqref{eq:getsig2}, we obtain \eqref{eq:getsig1} with
\begin{equation} \label{eq:Chatdef}
  \mathcal{C}_{ijk} \equiv \sum_{l=1}^r \wh{\mathcal{C}}_{ijl}
  \kmat^{-1}_{lk}.
\end{equation}
However, if this computation is not done carefully, rounding error will
lead to a significant loss of precision. In order to maintain full
double precision accuracy using \eqref{eq:getsig1}, $\wt{\mathcal{C}}$ and
$\kmat$ must be formed in quadruple precision. This is of course straightforward, since
the entries of these arrays are given explicitly. Then,
\eqref{eq:Cdef} and \eqref{eq:Chatdef} must be computed in quadruple
precision. Once $\mathcal{C}$ has been obtained, all subsequent
calculations -- in particular, \eqref{eq:getsig1} -- can be
carried out in double precision. Describing this phenomenon requires an analysis of floating
point errors which is beyond this scope of this paper. Alternatively, one
can simply obtain $\wh{\sigma}$ from $\sigma$ first, and use
\eqref{eq:getsig2} instead of \eqref{eq:getsig1}; then no such issue
arises, and all arrays may be formed using double precision arithmetic.

We make a brief remark on the computational complexity of solving the
Dyson equation using the DLR. The more standard method, using
\eqref{eq:dysonmat}, scales as $\OO{r^2}$, due to the cost of
transforming between the imaginary time and Matsubara frequency DLR grid
representations of $G$ and $\Sigma$. The sparse sampling method is
similar, and has roughly the same cost. \cite{li20} By contrast, the
imaginary time domain method we have described scales as
$\OO{r^3}$, due to the cost of forming $\wb{\Sigma}$ (the system
\eqref{eq:dysonDLR} can typically be solved at an $\OO{r^2}$ cost using
an iterative linear solver). Methods of reducing this cost may exist,
and will be explored in the future. However, since $r$ is typically small, the
discrepancy may or may not be significant in practice, and the pure
imaginary time domain method may be more convenient or robust in certain
applications.

\section{Example: the SYK equation} \label{sec:syk}

To demonstrate the method described in the previous section, we consider the  
Sachdev-Ye-Kitaev (SYK) equations, given by \cite{SachdevYe93,gu20}
\begin{equation}\label{eq:syk}
  \begin{cases}
    G^{-1}(i\nu_n) = i\nu_n + \mu - \Sigma(i\nu_n) \\ 
    \Sigma(\tau) = J^2 G^2(\tau) G(\beta-\tau),
  \end{cases}
\end{equation}
where $\mu$ is the chemical potential, $J$ is a coupling constant, and $G$ is a fermionic Matsubara
Green's function. We fix $J = 1$.

The SYK model exhibits remarkable properties, and is the subject of a
large literature. \cite{chowdhury21} Here, our motivation
is to illustrate the efficiency of the DLR approach in
solving a nonlinear Dyson equation. In the $\beta \to
\infty$ limit, it is known that solutions develop a $1/\sqrt{\omega}$
non-Fermi liquid singularity at low frequencies, or equivalently
$1/\sqrt{\tau}$ decay at large imaginary times. \cite{SachdevYe93} The DLR expansion
captures this behavior with excellent accuracy. Although guaranteed by our
analysis, this result may appear counterintuitive, but there is in fact a
significant literature on the approximation of functions
with power law decay by sums of a small number of exponentials.
\cite{beylkin05,beylkin10,zhang21}

We solve (\ref{eq:syk}) in the DLR basis using the imaginary time domain
method described in Section \ref{sec:dyson}.
Nonlinear iteration is carried out using a weighted fixed
point iteration
\[\Sigma^{(n+1)} = \Sigma[w \, G^{(n)} + (1-w) \, G^{(n-1)}],\]
with the weight $w$ chosen to ensure convergence. We terminate the
iteration when the
values of $G^{(n)}$ and $G^{(n-1)}$ on the imaginary time DLR grid match
pointwise to within a fixed point tolerance $\epsilon_{\text{fp}}$.

We first solve (\ref{eq:syk}) with $\mu = 0$ and $\beta = 10^4$, using
$G(\tau) = -1/2$ as the initial guess for the
weighted fixed point iteration. We take $\epsilon = 10^{-14}$, $\Lambda
= 5\beta$,
$\epsilon_{\textrm{fp}} = 10^{-12}$, and $w = 0.15$.
The calculation involves systems of only $117$ degrees of freedom, and takes less than a
second on a laptop.
$G(\tau)$ is plotted in Figure \ref{fig:gsyk}a,
along with the conformal asymptotic solution $G_c(\tau)$ given by \cite{PhysRevB.63.134406, gu20}
\begin{equation} \label{eq:gconf}
   G_c(\tau) = - \frac{\pi^{1/4}}{\sqrt{2\beta}} \paren{\sin\paren{\frac{\pi
	    \tau}{\beta}}}^{-1/2}.
\end{equation}
In Figure \ref{fig:gsyk}b, we plot the difference $G(\tau)-G_c(\tau)$ for $\tau \in
[0,\beta/2]$. We observe the expected $\OO{\tau^{-3/2}}$
asymptotic correction to \eqref{eq:gconf}. In Figure \ref{fig:gsyk}c, we
plot the error of $G(\tau)$ as compared with a standard
Legendre polynomial-based solver, \cite{GullStrand_2020}
which operates according to the description in Section \ref{sec:dyson}
with the DLR basis and nodes replaced by a Legendre polynomial basis and
Legendre nodes.

\begin{figure}[t]
  \centering
  \includegraphics[width=\linewidth]{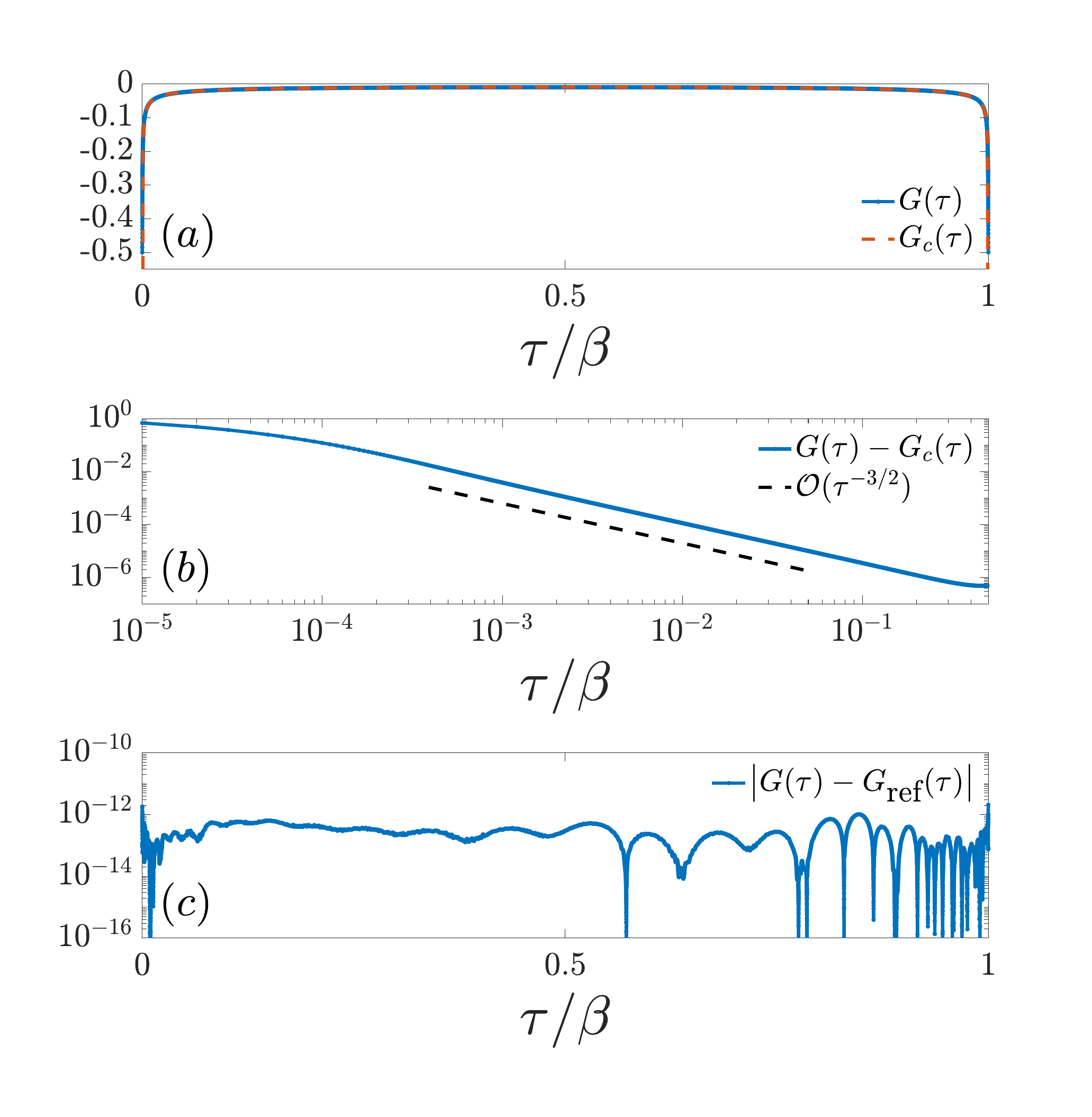}
  \caption{(a) Solution $G(\tau)$ of the SYK equation with $J = 1$, $\mu = 0$,
  and $\beta = 10^4$, along with the conformal solution $G_c(\tau)$.
  (b) Pointwise difference $G - G_c$, showing the form of the
  higher-order correction. (c) Pointwise error of computed $G$ measured
  against a reference solution $G_\text{ref}$ obtained using Legendre
  polynomial-based solver.}
\label{fig:gsyk}
\end{figure}

We next carry out
a high precision calculation of the compressibility in the SYK model in
the zero temperature limit,
following the results of Ref. \onlinecite{gu20} (Sec. 4.2).
We define the charge $Q$ (conventionally vanishing at half-filling) as 
\begin{equation}
   Q(\beta,\mu) \equiv (G_{\beta,\mu}(\beta)-G_{\beta,\mu}(0))/2,
\end{equation}
where $G_{\beta,\mu}$ is the solution of (\ref{eq:syk}) for fixed $\beta$ and $\mu > 0$.
The compressibility $K$ is defined as 
\begin{equation}
   K(T) = \left. \pd{Q(\beta,\mu)}{\mu}\right\rvert_{\mu = 0^+} =
\lim_{\mu \to 0^+} \frac{Q(\beta,\mu)}{\mu}
\end{equation}
with $T = \beta^{-1}$.

\begin{figure}[t]
  \centering
  \includegraphics[width=\linewidth]{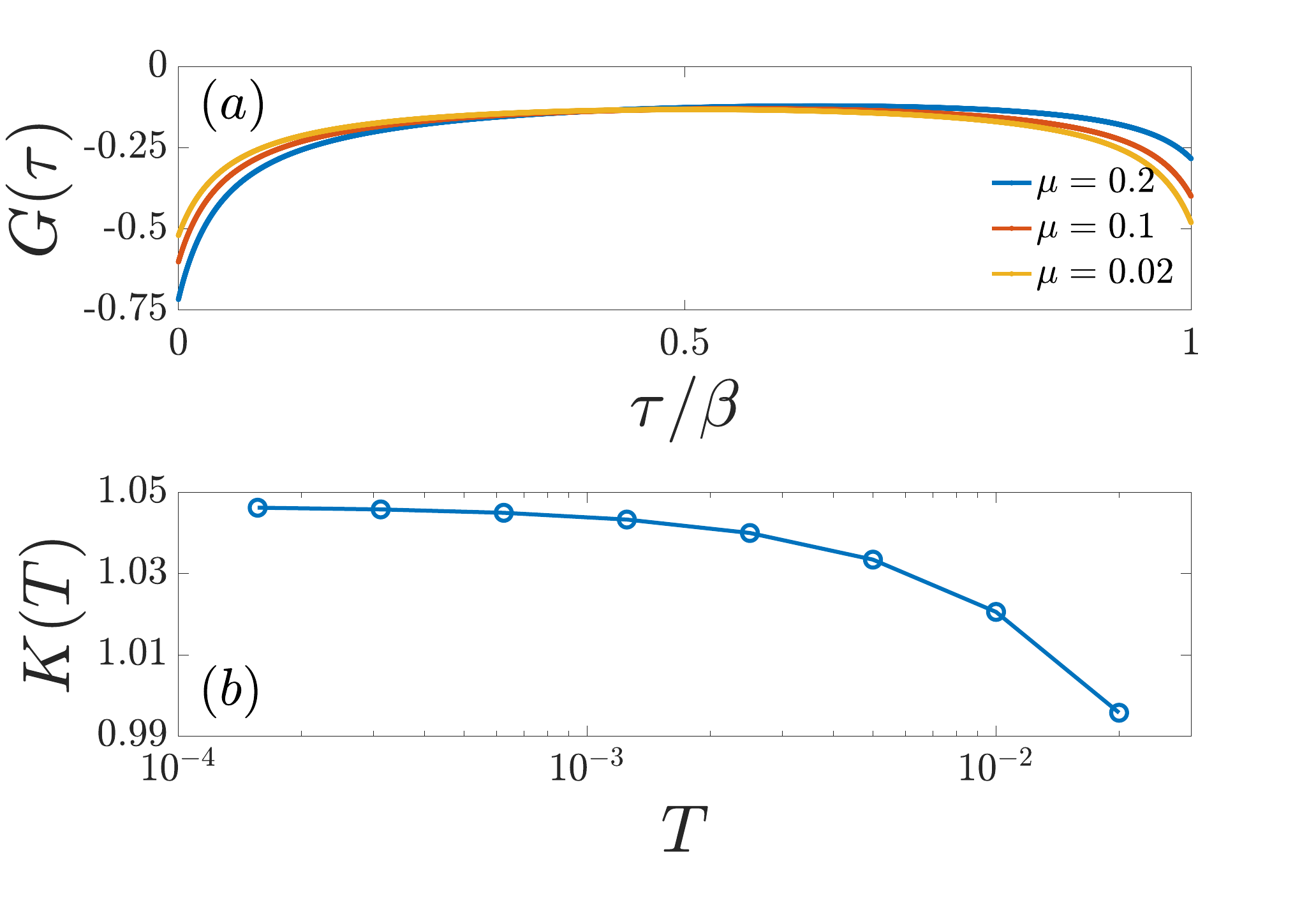}
  \caption{(a) Solution $G(\tau)$ of the SYK equation with $J = 1$,
  $\beta = 50$, and three values of $\mu$. (b) Compressibility $K(T)$ at
  low temperature.}
\label{fig:ksyk}
\end{figure}

Our goal is to calculate $K(0) = \lim_{T \to 0^+} K(T)$.
$G(\beta,\mu)$ is shown for $\beta = 50$ and $\mu = 0.2, 0.1, 0.02$ in Figure \ref{fig:ksyk}a.
As expected, $Q(\beta,\mu)$ is positive for $\mu > 0$ and decreases to
zero as $\mu \to 0$. 

In order to calculate $K(T)$ for each fixed $T$, we could simply compute $Q(\beta,\mu)/\mu$ for a small
value of $\mu$. However, this strategy suffers from rounding error due to catastrophic 
cancellation. To obtain a better approximation of
$K(T)$, we compute $Q$ by solving the SYK equation 
for $\mu = \mu_0/2^j$, with $j=1,\ldots,n$, and some choice of $\mu_0$
and $n$. We then use
Richardson extrapolation on the resulting values of $Q/\mu$ to obtain
the limiting value $K(T)$; see Ref. \onlinecite{dahlquist08} (Sec.
3.4.6) for a description of Richardson extrapolation.

We note that some care must be taken in the nonlinear iteration to avoid convergence to a
spurious exponentially-decaying solution. An effective strategy is to
compute the solution for a sequence of values of $\mu$: $\mu = j
\mu^*/n$, $j = 0,\ldots,n$, where $\mu^*$ is the desired value, and $n$
is chosen sufficiently large. For $\mu = 0$, we use the initial guess
$G(\tau) = -1/2$ in the nonlinear iteration, as above. For $\mu = j \mu^*/n$ with
$j>0$, we use the solution for $\mu = (j-1) \mu^*/n$ as an initial
guess. In many cases, taking $n = 1$ is sufficient.

We carry out this procedure 
for $\beta = 50,100,200,\ldots,6400$ with $\epsilon = 10^{-14}$,
$\epsilon_{\textrm{fp}} = 10^{-12}$, and $w$ taken sufficiently small to ensure convergence
of the nonlinear iteration. We take $\Lambda = 10\beta$, and have
verified that all calculations are converged with respect to this
parameter. The computations involve linear systems of at most $121$
degrees of freedom.

The computed values of $K(T)$ are shown in Figure
\ref{fig:ksyk}b.
From these values, we use Richardson extrapolation to estimate
$K(0)$:
\[K(0) \approx 1.0466998.\]

\section{Conclusion}\label{sec:Conclusion}

We have presented an efficient discrete Lehmann representation of imaginary time 
Green's functions based on the interpolative decomposition. In the low
temperature regime, it
requires far fewer degrees of freedom than standard discretizations,
and a similar number
to the recently introduced intermediate representation. The DLR basis
functions are explicit; they are exponentials, carefully chosen to ensure
stable and accurate approximation. This feature simplifies standard operations.
We have introduced algorithms which use standard numerical linear
algebra tools to efficiently
build the DLR basis and corresponding imaginary time and Matsubara
frequency grids. These algorithms also carry over to the intermediate
representation method.
We have demonstrated the DLR by solving the SYK equation to high
precision at low temperatures, with calculations taking on the order of
seconds on a laptop. Fortran and Python implementations of the algorithms
described in this paper are available in the library
\texttt{libdlr}. \cite{libdlr,kaye21}

\acknowledgements 
We thank Hugo Strand, Nikolay Prokof'ev, Boris Svistunov, Manas Rachh, Jeremy Hoskins, and Richard Slevinsky for
helpful discussions. The Flatiron Institute is a division of the
Simons Foundation.

\appendix

\section{DLR for bosonic Green's functions} \label{app:bosonic}

In this section, we argue that the DLR, derived using the
fermionic kernel $K(\tau,\omega)$ given by \eqref{eq:defK}, can also be applied directly
to bosonic Green's functions.

The truncated Lehmann representation for a bosonic Green's function is
given by
\begin{equation} \label{eq:tlehmannb}
  \gb(\tau)=-\int_{-\Lambda}^{\Lambda} \kb(\tau, \omega) \rhob(\omega) \, d\omega,
\end{equation}
where the bosonic kernel is given in nondimensionalized variables by
\begin{equation}
  \label{eq:bkernel}
  \kb(\tau, \omega)=\frac{e^{-\omega\tau}}{1-e^{-\omega}}.
\end{equation}
Although $\kb$ is singular at $\omega = 0$, for systems in which the $U(1)$
symmetry $\left< \hat{a} \right>=0$ and $\left< \hat{a}^\dag \right>=0$
(for $\hat{a}^\dag$/$\hat{a}$ the creation/anniliation operators)
is not spontaneously broken,
the singularity will be exactly cancelled out by a spectral density
vanishing to the appropriate order as $\omega \to 0$. Indeed, in this
case, the physical spectral density of a bosonic system has an explicit expression,
\begin{equation} \label{eq:rhoB}
  \begin{multlined}
  \rhob(\omega) = (1-e^{-\omega})\frac{2\pi}{Z} \\ \times \sum_{m,n} \left|\left<n|\hat{a}^\dag|m\right>\right|^2 e^{-E_m} \delta(E_n-E_m-\omega),
  \end{multlined}
\end{equation}
where $\left|n\right>$ and $\left|m\right>$ are eigenstates of the
many-body Hamiltonian with energies $E_n$ and $E_m$, respectively, and
$Z=\sum_n e^{-E_n}$ is the partition sum.

To handle this case, we simply rewrite \eqref{eq:tlehmannb} as
\[\gb(\tau)=-\int_{-\Lambda}^{\Lambda} K(\tau, \omega)
  \wt{\rhob}(\omega) \, d\omega,\]
where $K(\tau,\omega)$ is the fermionic kernel, and
\begin{equation} \label{eq:rho_reg}
  \wt{\rhob}(\omega) = \frac{1+e^{-\omega}}{1-e^{-\omega}} \rhob(\omega).
\end{equation}
The singularity in the factor $\frac{1+e^{-\omega}}{1-e^{-\omega}}$ is cancelled by
the factor $1-e^{-\omega}$ in $\rhob$ from \eqref{eq:rhoB};
otherwise, it is smooth and well-behaved.
Thus $\wt{\rhob}$ is integrable, and $\gb$ has the same Lehmann representation
as a fermionic Green's function, but with a modified spectral density.
The DLR method developed for fermionic Green's functions can therefore
be applied without modification.

\section{Proof of Theorem \ref{thm:dlr}} \label{app:thm1}

\begin{proofw}
The theorem follows from \eqref{eq:dlrvalid}, once we give a bound on the error
term. We have
\begin{align*}
  \abs{e(\tau)} &= \abs{\int_{-\Lambda}^\Lambda \errmat(\tau,\omega)
  \rho(\omega) \, d\omega} \\ &= \abs{\sum_{i=1}^M \wb{\ell}_i(\tau) \sum_{j=1}^N
  \errmat_{ij} \int_{-\Lambda}^\Lambda \wb{\ell}_j(\omega)
  \rho(\omega) \, d\omega} \\
  &\leq \norm{\errmat}_2 \sqrt{\sum_{i=1}^M \wb{\ell}_i^2(\tau)} \sqrt{\sum_{j=1}^N
  \paren{\int_{-\Lambda}^\Lambda \wb{\ell}_j(\omega)
  \rho(\omega) \, d\omega}^2}
\end{align*}
from the Cauchy-Schwarz inequality.

  From the definition of $\wb{\ell}_i$, we have $\norm{\sum_{i=1}^M
  \wb{\ell}_i^2(\tau)}_\infty = \norm{\sum_{k=1}^p
\ell_k^2(x)}_\infty$, where $\ell_k(x)$ are the Lagrange
polynomials at $p$ Chebyshev nodes on $[-1,1]$.
It follows from Lemma \ref{lem:sumsq}, proven in Appendix
\ref{app:sumsq}, that
\[\sum_{k=1}^p \ell_k^2(x) \leq 2.\]
For the last factor, we have
  \[\sum_{j=1}^N \paren{\int_{-\Lambda}^\Lambda \wb{\ell}_j(\omega)
  \rho(\omega) \, d\omega}^2 \leq \norm{\rho}_1^2 \sum_{j=1}^p \paren{\max_{x \in [-1,1]}
  \abs{\ell_j(x)}}^2.\]

Combining these results, we find
\[\norm{\errfun}_\infty \leq \sqrt{2 \sum_{j=1}^p \paren{\max_{x \in [-1,1]}
  \abs{\ell_j(x)}}^2} \norm{\errmat}_2 \norm{\rho}_1 =
  c \epsilon \norm{\rho}_1.\]
We note that $\sum_{j=1}^p \paren{\max_{x \in [-1,1]}
  \abs{\ell_j(x)}}^2$ depends only on $p$. Numerically, we find that it
  is approximately equal to $p$ for typical choices of $p$, implying
  $c \approx \sqrt{2p}.$
\end{proofw}

\section{Proof of Lemma \ref{lem:dlrstability}} \label{app:thm2}

\begin{proofw}
From \eqref{eq:krecover}, we have
  \[\gdlr(\tau) = \sum_{l=1}^r \wh{g}_l \sum_{k=1}^r \gamma_k(\tau)
  \kmat_{kl} = \sum_{k=1}^r \gamma_k(\tau) g_k\]
and similarly for $\hdlr$, so
\[\gdlr(\tau)-\hdlr(\tau) = \sum_{k=1}^r \gamma_k(\tau)
\paren{g_k-h_k}\]
and
  \[\abs{\gdlr(\tau)-\hdlr(\tau)} \leq \sqrt{\sum_{k=1}^r
  \gamma_k^2(\tau)} \, \norm{g-h}_2.\]
  Since $\gamma_k(\tau) = \sum_{i=1}^M \wb{\ell}_i(\tau) R_{ik}$, we have
\[\sum_{k=1}^r \gamma_k^2(\tau) \leq \norm{R}_2^2 \sum_{i=1}^M
  \wb{\ell}_i^2(\tau) \leq 2 \norm{R}_2^2.\]
  Here we have used Lemma \ref{lem:sumsq} from Appendix \ref{app:sumsq}, as in Appendix \ref{app:thm1}. The result follows from these
estimates.
\end{proofw}

\section{Bound on the sum of squares of Lagrange polynomials for Chebyshev nodes}
\label{app:sumsq}

The following lemma is used in Appendices \ref{app:thm1} and
\ref{app:thm2}:

\begin{lemma} \label{lem:sumsq}
  Let $\{\ell_k(x)\}_{k=1}^p$ be the Lagrange polynomials for the $p$
  Chebyshev nodes of the first kind on $[-1,1]$. Then
\[\sum_{k=1}^p \ell_k^2(x) \leq \sum_{k=1}^p \ell_k^2(1) = 2-1/p.\]
\end{lemma}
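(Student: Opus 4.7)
The plan is to recognize $L(x) = \sum_{k=1}^p \ell_k^2(x)$ as the diagonal of the reproducing kernel for $\mathbb{P}_{p-1}$ under a suitable discrete inner product on the Chebyshev nodes, and then bound the result using $|T_n(x)| \le 1$ on $[-1,1]$.

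First, I would set up the discrete inner product $\langle f,g\rangle_p = \sum_{j=1}^p f(x_j) g(x_j)$, where $x_j = \cos((2j-1)\pi/(2p))$ are the Chebyshev nodes of the first kind. The classical discrete orthogonality relations for Chebyshev polynomials give $\langle T_m, T_n\rangle_p = 0$ for $m \ne n$ with $0 \le m,n \le p-1$, while $\langle T_0, T_0\rangle_p = p$ and $\langle T_n, T_n\rangle_p = p/2$ for $1 \le n \le p-1$. Hence the polynomials $\phi_0 = T_0/\sqrt{p}$ and $\phi_n = T_n\sqrt{2/p}$ for $n=1,\ldots,p-1$ form an orthonormal basis for $\mathbb{P}_{p-1}$ with respect to $\langle\cdot,\cdot\rangle_p$.

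Next, because $\ell_k(x_j) = \delta_{kj}$, expanding $\ell_k$ in this orthonormal basis yields coefficients $\langle \ell_k, \phi_n\rangle_p = \phi_n(x_k)$, so $\ell_k(x) = \sum_{n=0}^{p-1} \phi_n(x)\phi_n(x_k)$. Squaring, summing over $k$, and using $\sum_{k=1}^p \phi_n(x_k)\phi_m(x_k) = \langle \phi_n, \phi_m\rangle_p = \delta_{nm}$ collapses the double sum to
\[
\sum_{k=1}^p \ell_k^2(x) \;=\; \sum_{n=0}^{p-1} \phi_n^2(x) \;=\; \frac{1}{p} + \frac{2}{p}\sum_{n=1}^{p-1} T_n^2(x).
\]

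Finally, since $|T_n(x)| \le 1$ on $[-1,1]$ with $T_n(\pm 1) = (\pm 1)^n$, each summand is bounded by $1$ and attains that bound at $x = \pm 1$, giving $\sum_k \ell_k^2(x) \le \frac{1}{p} + \frac{2(p-1)}{p} = 2 - 1/p$, with equality at the endpoints. I do not expect a serious obstacle: the main conceptual step is identifying $L(x)$ with the diagonal of the reproducing kernel, after which the argument reduces to standard discrete orthogonality and the elementary bound $|T_n|\le 1$. The only subtlety worth verifying carefully is the precise normalization of the discrete orthogonality relations, namely the factor of two between the $T_0$ and $T_n$ ($n\ge 1$) norms, which is what produces the $1/p$ versus $2/p$ coefficients and ultimately the value $2-1/p$ rather than $2$.
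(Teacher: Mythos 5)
Your proposal is correct, and it takes a genuinely different route from the paper. The paper establishes the closed form
\[
\sum_{k=1}^p \ell_k^2(x) = 1 + \frac{1}{2p}\bigl(U_{2p-2}(x)-1\bigr)
\]
by observing that both sides are polynomials of degree $2p-2$ and then verifying, with a fairly involved computation (sine/cosine difference formulas, l'H\^opital's rule, derivative identities for $T_n$ and $U_n$), that they agree in value and first derivative at the $p$ Chebyshev nodes; the bound then follows from $|U_n(x)| \le U_n(1) = n+1$. You instead recognize $\sum_k \ell_k^2(x)$ as the diagonal of the reproducing (Christoffel--Darboux) kernel for $\mathbb{P}_{p-1}$ under the discrete inner product on the Chebyshev nodes, use the standard discrete orthogonality of $T_0,\ldots,T_{p-1}$ to get
\[
\sum_{k=1}^p \ell_k^2(x) = \frac{1}{p} + \frac{2}{p}\sum_{n=1}^{p-1} T_n^2(x),
\]
and conclude from $|T_n(x)| \le 1 = T_n(\pm 1)^2$. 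The two identities are equivalent: applying $T_n^2 = \tfrac{1}{2}(1+T_{2n})$ and the Dirichlet-kernel identity $U_{2p-2} = 1 + 2\sum_{n=1}^{p-1}T_{2n}$ turns your expression into the paper's. Your derivation is shorter and more conceptual -- the orthonormal expansion collapses the double sum in one line, and the bound becomes immediate termwise -- whereas the paper's proof is more elementary in its prerequisites (only needing identities of Chebyshev polynomials and basic calculus) but requires verifying the $2p$ matching conditions by hand. Both are valid; yours is arguably the more transparent argument, and it also makes equality at $x=\pm 1$ manifest.
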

\begin{proof}
  The result follows from the identity
\begin{equation} \label{eq:lksqid}
  \sum_{k=1}^p \ell_k^2(x) = 1 + \frac{1}{2p} \paren{U_{2p-2}(x)-1},
\end{equation}
for $U_n(x)$ the degree $n$ Chebyshev polynomial of the second kind.
  Indeed, Ref. \onlinecite{nistdlmf} (Eqn. 18.14.1) gives that
\[\abs{U_n(x)} \leq U_n(1) = n+1,\]
and the desired result follows from this and \eqref{eq:lksqid}.

To prove \eqref{eq:lksqid}, we note that both the left and right
hand sides are polynomials of degree $2p-2$, so it suffices to show that
they agree in value and derivative at the $p$ Chebyshev nodes,
\[x_j = \cos\paren{\frac{2j-1}{2p}\pi},\]
for $j=1,\ldots,p$.

For the equality of values, the sine difference formula gives
\[U_{2p-2}(x_j) = \frac{\sin\paren{(2p-1)
\frac{2j-1}{2p}\pi}}{\sin\paren{\frac{2j-1}{2p}\pi}} = 1.\]
Since $\sum_{k=1}^p \ell_k^2(x_j) = 1$, this gives the equality.

For the equality of derivatives, we must show that
\[2\sum_{k=1}^p \ell_k(x) \ell_k'(x) = \frac{1}{2p} U'_{2p-2}(x)\]
for each $x = x_j$. Throughout the argument, we will use the formulas
for the derivatives of the Chebyshev polynomials of the first and second
kind, given by
\[T_n'(x) = n U_{n-1}(x)\]
and
\[U_n'(x) = \frac{(n+1) T_{n+1}(x) - x U_n(x)}{x^2-1}.\]
The cosine difference formula gives 
\begin{align*}
  \frac{U'_{2p-2}(x_j)}{2p}  &= \frac{(2p-1) T_{2p-1}(x_j) - x_j
  U_{2p-2}(x_j)}{2p \, (x_j^2-1)} \\
  &= \frac{(2p-1) \cos\paren{(2p-1) \frac{2j-1}{2p} \pi} - x_j}{2p
  \, (x_j^2-1)}
  \\
  &= \frac{x_j}{1-x_j^2}
\end{align*}
for the right hand side. For the left hand side, we have
  \[2 \sum_{k=1}^p \ell_k(x_j) \ell_k'(x_j) = 2 \ell_j'(x_j)
  = \sum_{\substack{k=0 \\ k \neq j}}^p \frac{2}{x_j-x_k}.\]
Our objective is therefore to show that
\begin{equation} \label{eq:diffeq}
\sum_{\substack{k=0 \\ k \neq j}}^p \frac{2}{x_j-x_k} = \frac{x_j}{1-x_j^2}
\end{equation}
for $j=1,\ldots,p$.

Define 
  \[f_j(x) = \sum_{\substack{k=0 \\ k \neq j}}^p \frac{1}{x-x_k},\]
  so that the left
hand side of \eqref{eq:diffeq} is equal to $f_j(x_j)$.
Let $\ell(x) =
\prod_{k=1}^p (x-x_k)$ be the node polynomial for the
Chebyshev nodes $x_j$. Then we have
\[f_j(x) = \frac{d}{dx} \log\abs{\ell(x)/(x-x_j)}.\]
We also have $\ell(x) = T_p(x)/2^{p-1}$, since $\ell(x)$ is a monic polynomial
of degree $p$ with zeros at the Chebyshev nodes. Therefore
\begin{align*}
  f_j(x) &= \frac{d}{dx} \log\abs{T_p(x)/(x-x_j)} \\
  &= \frac{(x-x_j) T_p'(x) - T_p(x)}{(x-x_j) T_p(x)} \\
  &= \frac{p (x-x_j) U_{p-1}(x) - T_p(x)}{(x-x_j) T_p(x)}
\end{align*}
and, using l'H{\^o}pital's rule, we find
\begin{align*}
  \sum_{\substack{k=0 \\ k \neq j}}^p \frac{1}{x_j-x_k} &= \lim_{x \to x_j} f_j(x) \\
  &= \lim_{x \to x_j} \frac{p (x-x_j) U_{p-1}(x) - T_p(x)}{(x-x_j)
  T_p(x)} \\
  &= \lim_{x \to x_j} \frac{p (x-x_j) U_{p-1}'(x)}{T_p(x) + p (x-x_j)
  U_{p-1}(x)} \\
  &= \lim_{x \to x_j} \frac{p \paren{p T_p(x) - x U_{p-1}(x)}/(x^2-1)}
  {T_p(x)/(x-x_j) + p U_{p-1}(x)} \\
  &= \frac{x_j}{2(1-x_j^2)}
\end{align*}
as was claimed.
\end{proof}

\bibliographystyle{ieeetr}
\bibliography{ref}
\end{document}